\documentclass[preprint,twoside,11pt]{article}

\usepackage{amsthm}
\usepackage{jmlr2e}

\def\ContraLinguaLatina{} 
\usepackage{mathtools,accents}
\usepackage{algorithm,algorithmic}

\usepackage{float}
\usepackage{wrapfig}
\usepackage[inline]{enumitem}

\usepackage{booktabs}


\usepackage[capitalise,nameinlink]{cleveref}
\usepackage{doi}

\Crefformat{figure}{#2Figure~#1#3}



\usepackage[x11names]{xcolor}

\newcounter{todo}

\makeatletter

\newcommand\listoftodos{\section*{To-do List}\@starttoc{tod}}
\makeatother


\usepackage{tikz}
\usetikzlibrary{cd}
\usetikzlibrary{shapes.geometric}
\usetikzlibrary{arrows.meta}
\usepgflibrary{plotmarks}

\usetikzlibrary{positioning}
\usetikzlibrary{backgrounds}
\usetikzlibrary{matrix}
\usetikzlibrary{decorations.markings}
\usetikzlibrary{calc}

\usepackage{pgfplots}
\pgfplotsset{compat=1.7}
\usepgfplotslibrary{groupplots}
\usepgfplotslibrary{fillbetween}
\usetikzlibrary{plotmarks}


\newtheorem{example}{Example}
\newtheorem{counterexample}[example]{Counterexample}

\newtheorem{theorem}{Theorem}
\newtheorem{lemma}[theorem]{Lemma}
\newtheorem{proposition}[theorem]{Proposition}
\newtheorem{corollary}[theorem]{Corollary}

\newtheorem*{example*}{Example}
\newtheorem*{theorem*}{Theorem}

\theoremstyle{definition}
\newtheorem{definition}[theorem]{Definition}

\theoremstyle{remark}





\DeclareMathOperator{\supp}{supp}

\DeclareMathOperator{\Tub}{Tub}

\newcommand{\hintCO}[1]{\ensuremath{[#1)}}


\newcommand{\djunion}{\ensuremath{\bigsqcup}} 
\newcommand{\drv}{\ensuremath{\mathrm{d}}} 
\newcommand{\reals}{\ensuremath{\mathbb{R}}} 
\newcommand{\indic}{\ensuremath{\mathbf{1}}} 

\newcommand{\labelcutd}{\ensuremath{\mathrm{d}}}
\newcommand{\unlabelcutd}{\ensuremath{\widehat{\delta}}}
\newcommand{\cutd}{\ensuremath{\delta}}
\newcommand{\weakcutd}{\ensuremath{\gamma}}

\newcommand{\dvar}{\ensuremath{{\mathrm{d}}_{\mathrm{var}}}}

\newcommand{\faceted}[1]{\ensuremath{#1^{\circ}}} 
\newcommand{\antifaceted}[1]{\ensuremath{#1_{\circ}}} 

\newcommand{\sample}[1]{\ensuremath{\mathbb{#1}}} 
\newcommand{\partition}[1]{\ensuremath{\mathcal{#1}}} 
\newcommand{\vect}[1]{\ensuremath{\mathbf{#1}}} 

\newcommand{\cut}{\ensuremath{\square}}
\newcommand{\ncut}[1]{\ensuremath{\square,{#1}}}

\newcommand{\hlower}[1]{\ensuremath{\lfloor #1\rfloor}}
\newcommand{\hupper}[1]{\ensuremath{\lceil #1\rceil}}

\newcommand{\kerns}{\ensuremath{\mathcal{W}}}
\newcommand{\fkerns}{\ensuremath{\mathcal{W}_0}}

\newcommand{\Borel}{\ensuremath{\mathcal{B}}}
\renewcommand{\Pr}{\ensuremath{\mathbb{P}}}
\newcommand{\Ex}{\ensuremath{\mathbb{E}}}

\renewcommand{\hom}{\ensuremath{\mathsf{hom}}}

\newcommand{\ind}{\ensuremath{\mathsf{ind}}}

\newcommand{\thom}{\ensuremath{t}}

\newcommand{\tind}{\ensuremath{t_{\ind}}}

\renewcommand{\vert}{\ensuremath{\mathsf{v}}}
\renewcommand{\Vert}{\ensuremath{\mathsf{V}}}



\newcommand{\set}[1]{\ensuremath{\mathcal{\uppercase{#1}}}}


\newcommand{\ie}{\textit{i.e.}}
\newcommand{\eg}{\textit{e.g.}}
\newcommand{\cf}{\textit{cf.}}

\ifdefined\ContraLinguaLatina
\renewcommand{\ie}{that is}
\renewcommand{\eg}{for example}
\renewcommand{\cf}{see}
\fi

\newcommand{\iid}{\textnormal{i.i.d.}}

\usepackage{lastpage}
\jmlrheading{23}{2022}{1-\pageref{LastPage}}{08/01}{Under Review}{roddenberry22}{T. Mitchell Roddenberry and Santiago Segarra}
\ShortHeadings{Limits of Dense Simplicial Complexes}{Roddenberry and Segarra}
\firstpageno{1}

\hypersetup{
  pdfauthor={T. M. Roddenberry \& S. Segarra},
  pdftitle={Limits of Dense Simplicial Complexes}
}

\begin{document}

\title{Limits of Dense Simplicial Complexes}

\author{\name T.~Mitchell~Roddenberry \email mitch@rice.edu \\
  \addr Department of Electrical and Computer Engineering \\
  Rice University \\
  Houston, TX 77005-1827, USA
  \AND
  \name Santiago~Segarra \email segarra@rice.edu \\
  \addr Department of Electrical and Computer Engineering \\
  Rice University \\
  Houston, TX 77005-1827, USA}

\editor{Editor TBD}

\maketitle

\begin{abstract}%
  We develop a theory of limits for sequences of dense abstract simplicial complexes, where a sequence is considered convergent if its homomorphism densities converge.
  The limiting objects are represented by stacks of measurable $[0,1]$-valued functions on unit cubes of increasing dimension, each corresponding to a dimension of the abstract simplicial complex.
  We show that convergence in homomorphism density implies convergence in a cut-metric, and vice versa, as well as showing that simplicial complexes sampled from the limit objects closely resemble its structure.
  Applying this framework, we also partially characterize the convergence of nonuniform hypergraphs.
\end{abstract}

\begin{keywords}
  Simplicial complex, graphon, stochastic topology, topological data analysis, graph limit
\end{keywords}


\section{Introduction}
\label{sec:intro}

The theory of graph limits has been one of the most fruitful developments in modern combinatorics, with applications ranging from extremal graph theory to statistical physics~\citep{Borgs2008,Borgs2012,Lovasz2012}.
Viewing graphs as (abstract) simplicial complexes of dimension one, it is natural to ask how this theory can be extended to limiting objects of simplicial complexes.
Indeed, recent work in stochastic topology has studied the topological properties of large random simplicial complexes, characterizing their connectivity and (co)homology~\citep{Bobrowski2022}.

An example of such a random simplicial complex is a random geometric \v{C}ech complex.
Let $\mu$ be a probability measure on $\mathbb{R}^d$ for some integer $d\geq 1$, and let $\epsilon>0$ be a real number.
Denote the support of $\mu$ by $\mathcal{X}=\supp(\mu)$, and treat $\mathcal{X}$ as a metric probability space, with the metric inherited from $\mathbb{R}^d$.
For some fixed $\epsilon>0$, we consider the \v{C}ech complex of $\mathcal{X}$ in $\mathbb{R}^d$ with radius $\epsilon$, which is defined as the infinite simplicial complex $\check{C}_\epsilon(\mathcal{X},\mathbb{R}^d)$ composed of all finite subsets of $\mathcal{X}$ with diameter strictly less than $\epsilon$.

\begin{wrapfigure}{L}{0.4\textwidth}
  \centering
  \includegraphics[width=\linewidth]{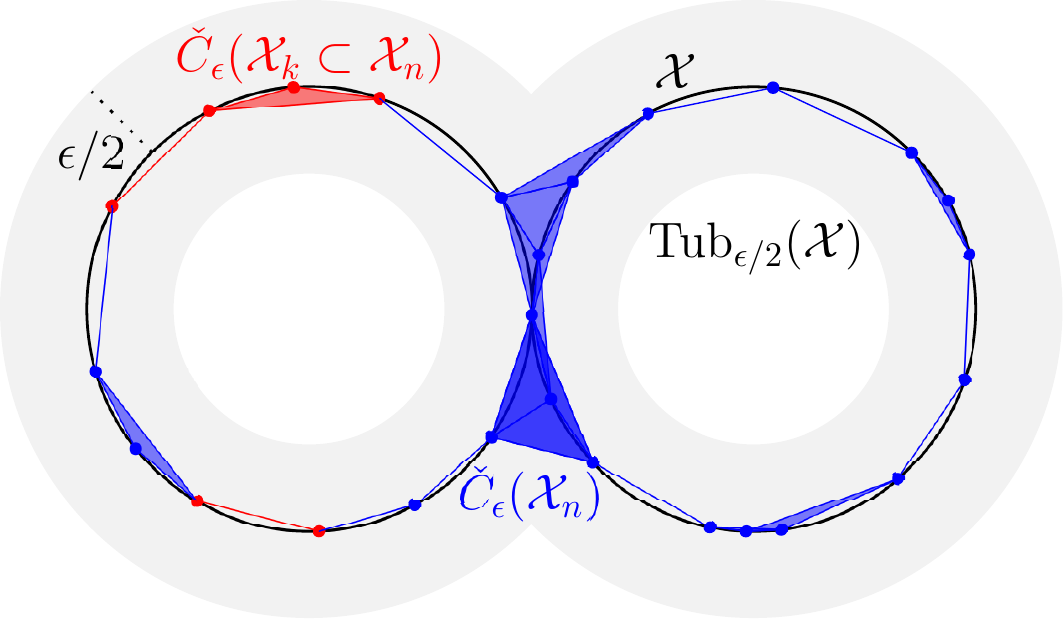}
  \caption{\v{C}ech complexes formed from a bouquet of two circles in $\mathbb{R}^2$.}
  \label{fig:bouquet-example}
\end{wrapfigure}

Denoting by $\Tub_{\epsilon/2}(\mathcal{X},\mathbb{R}^d)$ the set of all points in $\mathbb{R}^d$ that are less than $\epsilon/2$-away from $\mathcal{X}$, the set of all open balls of radius $\epsilon/2$ centered at points in $\mathcal{X}$ is a cover of $\Tub_{\epsilon/2}(\mathcal{X},\mathbb{R}^d)$ by contractible open sets.
The nerve theorem then implies that the geometric realization of the \v{C}ech complex $\check{C}_\epsilon(\mathcal{X},\mathbb{R}^d)$ is homotopy equivalent to $\Tub_{\epsilon/2}(\mathcal{X},\mathbb{R}^d)$.
As an example, let $\mathcal{X}$ be a bouquet of two unit circles in $\mathbb{R}^2$, \ie{}, the union of two circles of unit radius centered at $(-1,0)$ and $(1,0)$ respectively, as pictured in \cref{fig:bouquet-example}, and let $\mu$ be the uniform measure on $\mathcal{X}$.
Clearly, the space $\Tub_{\epsilon/2}(\mathcal{X},\mathbb{R}^2)$ (pictured in \cref{fig:bouquet-example} as the gray shaded region) is homotopy equivalent to $\mathcal{X}$ for any $\epsilon<2$.

The field of topological data analysis (TDA)~\citep{Carlsson2009} has leveraged this idea to great success, replacing troublesome computations about the topological structure of complicated spaces with more tractable computations on approximations formed from point clouds, such as the \v{C}ech, or more commonly, Vietoris-Rips complex.
In practice, TDA applications typically do not have access to the entire space $\mathcal{X}$; rather, a finite set of $n$ points $\mathcal{X}_n=\{X_1,\ldots,X_n\}$ in $\mathcal{X}$ sampled \iid{} according to the probability measure $\mu$ is available, from which the \v{C}ech complex $\check{C}_\epsilon(\mathcal{X}_n,\mathbb{R}^d)$ is formed.
This \v{C}ech complex is used as an ``estimate'' of sorts for $\check{C}_\epsilon(\mathcal{X},\mathbb{R}^d)$.
If the space $\mathcal{X}$ is compact, one can show that as $n\to\infty$, the space $\mathcal{X}_n$ with the Euclidean metric almost surely converges to $\mathcal{X}$ in the Gromov-Hausdorff metric.
This implies, for instance, convergence of the persistence diagrams of $\mathcal{X}_n$ to that of $\mathcal{X}$ in the bottleneck distance~\citep{Chazal2009}.
Returning to our example of the bouquet of circles, see in \cref{fig:bouquet-example} that a finite subset $\mathcal{X}_n\subset\mathcal{X}$ yields a \v{C}ech complex that preserves the topological features of the space, \eg{}, having a free fundamental group of rank two.
Observe that sampling more points would not introduce any additional topological features to the constructed complex.

However, convergence of this sort is not as easy to establish when $\mathcal{X}$ is not compact.
For instance, the case when $\mathbb{X}=\mathbb{R}^d$ and $\mu$ is a radially symmetric power-law, exponential, or Gaussian distribution was considered by~\citet{Adler2014}, where they showed that under many circumstances, highly nontrivial homological features emerge in ways that vary with $n$, precluding the convergence results mentioned above.
We consider an alternative way to describe the convergence of the \v{C}ech complexes $\check{C}_\epsilon(\mathcal{X}_n,\mathbb{R}^d)$ to $\check{C}_\epsilon(\mathcal{X},\mathbb{R}^d)$.
Observe that each $\check{C}_\epsilon(\mathcal{X}_n,\mathbb{R}^d)$ is a random induced subcomplex of $\check{C}_\epsilon(\mathcal{X},\mathbb{R}^d)$, with nodes distributed \iid{} according to the probability measure $\mu$.
This motivates a preliminary definition of convergence for simplicial complexes: we say that a sequence of finite simplicial complexes $K_n$ is convergent if for all $k\geq 1$ and all finite simplicial complexes $F$ with vertex set $[k]$, the probability that the induced subcomplex of $k$ nodes chosen uniformly at random from $K_n$ is equal to $F$ is convergent.
That is to say, the distribution of uniformly sampled induced subcomplexes of any fixed size converges as $n\to\infty$.
We indicate this further subsampling operation in red in \cref{fig:bouquet-example}.
In this case, for instance, it holds for all $n$ that the probability of subsampling a cycle on $k<\lfloor\pi/\arcsin(\epsilon/2)\rfloor$ nodes from $\check{C}_\epsilon(\mathcal{X}_n,\mathbb{R}^2)$ is zero.

The characterization of convergence via the distribution of induced substructures is closely related to the theory of graphons~\citep{Borgs2008}.
Following this approach, we develop an analogous limit theory for simplicial complexes.
We consider a sequence of simplicial complexes to be convergent if their homomorphism densities converge, which we show to be equivalent to convergence in a cut-metric.
The limiting object of such a convergent sequence is described in terms of a collection of symmetric kernel functions on the sequence of unit cubes of increasing dimension; we dub these objects \emph{complexons}.\footnote{Following the concluding remarks of \citet{Bobrowski2022}.}
These objects closely relate to graphons in the random complex models that they induce, with two representations that correspond to either the usual presentation of a simplicial complex as a collection of finite sets closed under restriction, or the presentation in terms of its facets.
The main results of this work relate the cut-metric and homomorphism densities for the limit objects of large, dense simplicial complexes.
\begin{theorem*}[Informal]
  If two complexons are close in the cut-metric, then they yield similar random simplicial complexes by sampling.
  Conversely, if they yield similar random simplicial complexes by sampling, then they are close in the cut-metric.
\end{theorem*}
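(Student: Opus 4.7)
The plan is to follow the Borgs--Chayes--Lovász--Sós--Vesztergombi blueprint linking cut-metric distance and homomorphism density for graphons, adapted to the hierarchical structure of complexons. A complexon is a stack of symmetric kernels $W_k\colon[0,1]^{k+1}\to[0,1]$, one per simplex dimension, and the law of the induced subcomplex on $n$ \iid{} sampled vertices is governed by a coupled family of multilinear integrals against these kernels. The two directions correspond, as in the graphon theory, to a counting lemma and an inverse counting lemma, and I would prove them separately.

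\textbf{Forward direction (cut-close implies sampling-close).} Fix a finite simplicial complex $F$ on vertex set $[k]$. The probability that the induced subcomplex on $k$ \iid{} samples from a complexon $W$ equals $F$ can be written as a single integral over $[0,1]^k$ whose integrand is a product, over every $S\subseteq[k]$ with $|S|\geq 2$, of either $W_{|S|-1}(x_S)$ or $1-W_{|S|-1}(x_S)$, according to whether $S$ is a face of $F$. Given a second complexon $W'$, I would estimate the difference in these probabilities by a telescoping argument: swap $W_j$ for $W'_j$ one dimension at a time, and within each dimension one simplex at a time, bounding each individual swap by the cut norm of $W_j-W'_j$ while treating every other factor in the integrand as an $L^\infty$-bounded test function. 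Summing gives a counting inequality whose constant depends only on $|F|$, so cut-closeness of $W$ and $W'$ controls all induced-subcomplex probabilities simultaneously and hence the distance between sampling distributions.

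\textbf{Reverse direction (sampling-close implies cut-close).} Here the strategy is a first-moment sampling result of Lovász--Szegedy type: a finite induced subcomplex $\sample{G}(n,W)$ sampled from $W$ is, with high probability as $n\to\infty$, close in cut-metric to $W$. If $W$ and $W'$ yield indistinguishable sampling distributions, their samples can be coupled so that $\sample{G}(n,W)$ and $\sample{G}(n,W')$ nearly coincide, forcing $\unlabelcutd(W,W')$ to be small. A cleaner alternative combines the forward counting lemma with compactness of the space of complexons modulo cut-metric: a subsequence argument then shows that equality of all induced-subcomplex densities forces cut-metric agreement, and quantitative tracking along this argument yields the inverse counting estimate.

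\textbf{Main obstacle.} The difficult step is the sampling/regularity ingredient of the reverse direction. The kernels $W_k$ are not independent of one another: consistency with the simplicial structure (every face of a sampled simplex is sampled) couples them, so the regularity decomposition used to pass from sampling distance to cut-metric distance must refine $[0,1]$ along a common partition fine enough to simultaneously approximate every $W_k$ while respecting face inclusions. Importing and adapting hypergraph-style higher-order regularity techniques to this coupled setting, and verifying that the joint approximation error is controlled by sampling quantities rather than blowing up with dimension, is where the bulk of the technical work lies. The face coupling is both the extra difficulty and the extra structure that separates the simplicial setting from the nonuniform hypergraph analogue treated later in the paper.
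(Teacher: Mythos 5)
Your plan mirrors the paper's decomposition quite closely: a telescoping counting lemma for the forward direction (\cref{lemma:counting}), a high-probability sampling concentration result (\cref{lemma:sampling}), and a coupling argument that combines the two for the inverse direction (\cref{lemma:pre-inverse-counting,lemma:inverse-counting}). The skeleton is the Borgs--Chayes--Lov\'asz--S\'os--Vesztergombi one, just as you describe. However, there are two points where your sketch departs from what actually happens and where, if followed literally, it would run into trouble.

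First, the reverse direction as you've stated it --- ``forces $\cutd_{\cut}(W,W')$ to be small'' --- is false. Two complexons with the same samples need not be cut-close: take $W$ and $W'$ that agree on $[0,1]^2$ but differ wildly on $[0,1]^3$ on a region where $W$ is small on $[0,1]^2$; the sampling procedure never visits the higher-dimensional kernel there because the required edges are almost never present. Sampling only ``sees'' the faceted complexon $\faceted{W}(x_1,\dots,x_{d+1})=\prod_{\sigma\subseteq[d+1]}W(x_\sigma)$, and the correct conclusion of the inverse counting lemma is a bound on $\cutd_{\cut}(\faceted{U},\faceted{W})$, not on $\cutd_{\cut}(U,W)$. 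This is precisely why the paper's canonical topology is defined through the faceting map rather than the cut-metric on raw complexons; without this correction your reverse-direction statement has counterexamples.

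Second, your ``main obstacle'' paragraph is looking for trouble where there isn't any. You anticipate needing a hypergraph-style higher-order regularity lemma to handle the face coupling across dimensions. But because the cut-metric used here is a weighted sum $\sum_j\alpha_j\,\labelcutd_{\ncut{j}}$ of \emph{separate} $j$-dimensional cut-distances, one can apply the weak (Frieze--Kannan) regularity lemma dimension by dimension (\cref{thm:partition}), pass to a common equipartition (\cref{lemma:equipartitioning}), and sum --- no strong or higher-order regularity is required. The face coupling shows up in the passage from $W$ to $\faceted{W}$ via \cref{lemma:hypergraph-sampling} and in the sampling lemma via \cref{lemma:exp-weighted-sample}, not in the regularity decomposition itself. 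If you had pursued hypergraph regularity here you would have been manufacturing a much harder proof than the theorem requires.
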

This is stated more formally in \cref{thm:hom-cut-eq}.
As part of the proof, it will also be shown that large simplicial complexes drawn from a complexon concentrate in the cut-metric, and thus have similar homomorphism densities to the complexon.

The paper is organized as follows.
After covering preliminaries in \cref{sec:prelim}, complexons and random sampling from complexons are defined in \cref{sec:complexons}.
The main results are stated in \cref{sec:space}, with most proofs relegated to the appendix.
Some remarks relating complexons to other notions in the literature, including the convergence of nonuniform hypergraphs under certain conditions, are made in \cref{sec:remarks}.

\subsection{Related Work}

In graph limit theory, a key idea is to reduce extremely large graphs into simpler objects by treating them as distributions of random subgraphs.
The study of such limits was initiated by~\citet{Lovasz2006}, and further developed by~\citet{Borgs2008,Borgs2012}.
These notions relate to exchangeable random arrays, considered originally by~\citet{Aldous1981,Hoover1979} and related to random graphs by~\citet{Diaconis2007}.
The work of~\citet{Elek2012} extended this approach to dense, uniform hypergraphs by considering analogous notions of homomorphism convergence and cut-metrics, which was further studied by~\citet{Zhao2015}.
The general approach of relating homomorphism densities and some cut-metric has been applied to form limit theories for other combinatorial objects as well, such as partially ordered sets~\citep{Janson2011} and random cographs~\citep{Stufler2021}.

The theory of large, random simplicial complexes for modeling higher-order interactions in network science has been quite active recently, with approaches from topological data analysis, dynamical systems, and signal processing~\citep{Schaub2021,Battiston2022}.
A series of recent papers considered the topological properties of large, random simplicial complexes~\citep{Costa2016,Costa2017a,Costa2017b}.
Indeed, a type of limiting object for simplicial complexes similar to the Rado graph was considered by~\citet{Farber2021}, which arises as the limit of a dense random simplicial complex on countably many nodes, in a certain regime.
In the context of TDA, the study of persistent homology of random subsamples of metric measure spaces has been considered in great detail by~\citet{Chazal2014,Chazal2015,Chazal2017}.
In particular, the persistence homology of persistence barcodes of random subsets of compact metric measure spaces is considered.
This is in contrast to our work in two ways.
First, the simplicial complexes we consider are not required to be modeled by an underlying metric measure space.
Second, complexons naturally model simplicial complexes where not only the nodes are random (in that the nodes themselves are positioned in space according to some distribution), but where the faces connecting them are random.
Initial studies of random complexes in noncompact spaces have been undertaken by \citet{Adler2014,Owada2020}, where the (persistent) homological structure of point clouds drawn from distributions of unbounded support in $\mathbb{R}^d$ are considered.
In the observation of what they dub ``topological crackle,'' the robustness of TDA methods to potentially unbounded noise is studied.
Both of these works consider simplicial complexes determined via fixed rules applied to random sets of points, which varies from our study of complexes that are random even when conditioned on a fixed set of points.
We refer the reader to the excellent survey paper of~\citet{Bobrowski2022} for more references on the emerging topic of large, random simplicial complexes.
\section{Preliminaries}
\label{sec:prelim}

The notation $[n]$ denotes the set of integers $\{1,2,\ldots,n\}$.
A simplicial complex\footnote{Strictly speaking, we are concerned with abstract simplicial complexes. However, we will never refer to the geometric realization, so we will omit the word \emph{abstract}.}
$K$ is a finite collection of finite sets that is closed under restriction, \ie, the taking of subsets.
The collection of sets in $K$ with cardinality $d+1$ is denoted by $K^{(d)}$, and we call that set the set of $d$-simplices.
For a $d$-simplex $\sigma$, we say that $\dim{\sigma}=d$.
The set $K^{(0)}$ is referred to as the set of nodes, with the alternative notation $\Vert(K)$, when convenient.
We denote the cardinality of $\Vert(K)$ by $\vert(K)$.
Although $K^{(0)}$ is a set of singleton sets, we will find it useful sometimes to identify said singleton sets with their constituent elements.
The sets $K^{(1)}$ and $K^{(2)}$ are sometimes referred to as the sets of edges and triangles, respectively.
If $K$ is such that for all $c>d$, $K^{(c)}=\emptyset$, then we say that $K$ is a simplicial complex of dimension $d$.
A simple graph, for instance, is a simplicial complex of dimension one.
It will also be useful to consider weighted simplicial complexes.
A weighted simplicial complex $(H,\omega)$ is taken to be the power set $H=2^{\Vert(H)}$ for some finite $\Vert(H)$, and a function $\omega:H\to[0,1]$, where $\omega$ is called the weight function.
It is assumed that $\omega(\Vert(H))=\omega(\emptyset)=1$.

For a simplicial complex $K$, the set of facets of $K$ is the collection of maximal sets in $K$, \ie, the sets in $K$ that are subsets of no other sets in $K$, denoted $\faceted{K}$.
Observe that the set of facets of a simplicial complex completely characterizes the simplicial complex, by taking the closure of the facets under restriction.
The set of antifacets of $K$ is the collection of minimal sets in $2^{\Vert(K)}\setminus K$, \ie, the collections of nodes that do not constitute a simplex, but whose strict subsets do constitute a simplex, denoted $\antifaceted{K}$.
Similarly, the set of antifacets of a simplicial complex completely characterize the simplicial complex.
We  define the facets of a weighted simplicial complex not by changing the underlying set, but by changing the weight function.
For a weighted simplicial complex $(H,\omega)$, the faceted weighted simplicial complex $(\faceted{H},\faceted{\omega})$ is such that $\faceted{H}=H$, and for each $\sigma\in\faceted{H}$,
\begin{equation*}
  \faceted{\omega}(\sigma) = \prod_{\sigma'\subseteq\sigma}\omega(\sigma').
\end{equation*}

\subsection{Homomorphism Densities}

Much like in the theory of dense graph limits, we characterize simplicial complexes in terms of their homomorphism densities.
For two simplicial complexes $F,K$, a homomorphism from $F$ to $K$ is a map $\phi:\Vert(F)\to\Vert(K)$ such that for any $\sigma\in F^{(d)}$, it holds that $\phi(\sigma)\in K^{(d)}$, \ie, a homomorphism is a simplex-preserving map.
An induced homomorphism is an injective homomorphism from $F$ to $K$ with the added condition that $\sigma\in F^{(d)}$ if and only if $\phi(\sigma)\in K^{(d)}$, \ie, an injective homomorphism that also preserves non-simplices.
We denote the number of homomorphisms and induced homomorphisms from $F$ to $K$ by $\hom(F,K)$ and $\ind(F,K)$, respectively.

Normalizing these quantities appropriately yields homomorphism densities.
Homomorphisms densities answer the following type of question: given a uniformly chosen (injective) random map $\phi:\Vert(F)\to \Vert(K)$, what is the probability that that map is an (induced) homomorphism?
More precisely, we respectively define the homomorphism density and induced homomorphism density of $F$ in $K$ as
\begin{align*}
  \thom(F,K) &= \frac{\hom(F,K)}{\vert(F)^{\vert(K)}} \\
  \tind(F,K) &= \frac{\ind(F,K)}{P(\vert(K),\vert(F))},
\end{align*}
where $P(n,k)$ is the number of $k$-permutation of $n$, \ie, the number of injective maps from a set of cardinality $k$ into a set of cardinality $n$.

\subsection{The Cut Distance}

The homomorphism densities characterize the distribution of subcomplexes in a simplicial complex.
This may be thought of as looking at local structures in a simplicial complex.
On the other hand, the cut-metric characterizes simplicial complexes in a global way.
The cut-norm is defined for a multidimensional matrix \citep[Section~6]{Frieze1999} in the following way.
For finite sets $X_1,X_2,\ldots,X_r$, let $A:\prod_j X_j \to\reals$.
For subsets $S_j\subseteq X_j$ for $j\in[r]$, put
\begin{equation*}
  A(S_1,\ldots,S_r) = \sum_{e\in\prod_j S_j}A(e).
\end{equation*}
The (normalized) cut-norm of $A$ is defined as
\begin{equation*}
  \|A\|_\cut = \frac{\max\left\{|A(S_1,\ldots,S_r)|:S_j\subseteq X_j\text{ for }j\in[r]\right\}}{\prod_j|X_j|}.
\end{equation*}

Let $K_1,K_2$ be simplicial complexes such that $\Vert(K_1)=\Vert(K_2)$.
For some $d\geq 1$, let $A_{1,d}:\prod_{j=1}^{d+1}\Vert(K_1)\to\{0,1\}$ be the indicator function of $d$-simplices in $K_1$, \ie, $A_{1,d}(i_1,\ldots,i_{d+1})=1$ if and only if $\{i_1,\ldots,i_{d+1}\}\in K_1^{(d)}$.
Define $A_{2,d}$ in the same way for $K_2$.
The $d$-dimensional labeled cut-distance between $K_1$ and $K_2$ is defined as follows:
\begin{equation*}
  \labelcutd_{\ncut{d}}(K_1,K_2) = \|A_{1,d}-A_{2,d}\|_\cut.
\end{equation*}
We extend this definition to weighted simplicial complexes $(K_3,\omega)$ by putting $A_{3,d}(i_1,\ldots,i_{d+1})=\omega(\{i_1,\ldots,i_{d+1}\})$, and using the normalized cut-norm for the metric as before.
This allows us to compare weighted and unweighted simplicial complexes.

Observe that the $d$-dimensional labeled cut-distance finds the maximal collection of subsets $S_1,\ldots,S_{d+1}\subseteq\Vert(K_1)$ such that the number of $d$-simplices contained in $\prod_j S_j$ differs maximally between $K_1$ and $K_2$.
More precisely, for arbitrary subsets $S_1,\ldots,S_{d+1}\subseteq\Vert(K_1)$, put
\begin{equation*}
  K_1^{(d)}(S_1,\ldots,S_{d+1}) = \left\{\sigma\in K^{(d)}:\sigma\in\prod_j S_j\right\},
\end{equation*}
and similarly define\footnote{We abuse the notation $\sigma\in\prod_j S_j$ here, meaning that some tuple formed by permuting the elements of $\sigma$ is contained in the Cartesian product.}
$K_2^{(d)}(S_1,\ldots,S_{d+1})$.
Then, one can see that
\begin{equation*}
  \labelcutd_{\ncut{d}}(K_1,K_2) = \frac{\max_{S_j\subseteq X_j:j\in[d+1]}\left||K_1^{(d)}(S_1,\ldots,S_{d+1})|-|K_2^{(d)}(S_1,\ldots,S_{d+1})|\right|}{\vert(K)^{d+1}}.
\end{equation*}
The $d$-dimensional labeled cut-distance describes how different two simplicial complexes may look when observing $d$-simplices across partitions of the node set.

The distances $\labelcutd_{\ncut{d}}$ for $d\geq 1$ are each only able to characterize the difference between two complexes in a single dimension at a time.
We define the labeled cut-distance by taking a weighted sum of all such distances.
Namely, for a nonnegative, summable sequence $(\alpha_j)_{j\geq 1}$ of real numbers, put
\begin{equation*}
  \labelcutd_{\cut}(K_1,K_2;(\alpha_j)) = \sum_{j=1}^\infty \alpha_j\labelcutd_{\ncut{j}}(K_1,K_2).
\end{equation*}
Again, this definition naturally extends to include weighted simplicial complexes as well.

The family of labeled cut-distances are dependent upon the assumption that the node sets of $K_1$ and $K_2$ are equal in size and correspond to one another.
We now consider the case where two simplicial complexes have node sets that do not align.
As a preliminary definition, for a nonnegative, summable sequence $(\alpha_j)_{j\geq 0}$, and two simplicial complexes $K_1,K_2$ such that $\vert(K_1)=\vert(K_2)$, put
\begin{equation*}
  \unlabelcutd_{\cut}(K_1,K_2;(\alpha_j)) = \min_{\phi:\Vert(K_2)\to\Vert(K_1)}\labelcutd_{\cut}(K_1,\phi(K_2);(\alpha_j)),
\end{equation*}
where $\phi$ ranges over bijective maps from $\Vert(K_2)$ to $\Vert(K_1)$.
That is to say, $\unlabelcutd_{\cut}$ first finds the optimal alignment between the two simplicial complexes, and then compares them via $\labelcutd_{\cut}$.

With this in hand, we now extend the definition to handle simplicial complexes whose node sets differ in size.
First, for an integer $m>0$, define the $m$-blowup of a simplicial complex $K$ as the simplicial complex $mK$ where $\Vert(mK)=\Vert(K)\times[m]$, and a set $\{(v_i,j_i)\}_{i=1}^{d+1}\in(mK)^{(d)}$ if and only if $\{v_i\}_{i=1}^{d+1}\in K^{(d)}$, where we assume that $\{v_i\}_{i=1}^{d+1}$ is a set of cardinality $d+1$ (no duplicate nodes).
Then, for two simplicial complex $K_1,K_2$ such that $\vert(K_i)=n_i$, define the unlabeled cut-distance as
\begin{equation*}
  \cutd_{\cut}(K_1,K_2;(\alpha_j)) = \lim_{m\to\infty}\unlabelcutd_{\cut}(mn_2K_1,mn_1K_2;(\alpha_j)).
\end{equation*}
The unlabeled cut-distance blows both simplicial complexes up by a very large factor, so that it can then find a very fine alignment between them, amounting to a so-called fractional overlay of the node sets.
Indeed, $\cutd_{\cut}$ can be equivalently defined in terms of a minimizing fractional overlay of the node sets \citep[see][Eq.~8.10]{Lovasz2012}, but we omit that discussion here.
\section{Complexons and Random Sampling}
\label{sec:complexons}

As in the theory of graph limits, the convergence of a sequence of simplicial complexes can be defined in terms of homomorphism densities.
Let $K_1,K_2,\ldots$ be a sequence of simplicial complexes.
We say that the sequence $(K_n)_{n\geq 1}$ is convergent if for all simplicial complexes $F$, it holds that $(\thom(F,K_n))_{n\geq 1}$ is a convergent sequence.
The remainder of this paper is devoted to understanding the appropriate limiting objects for such convergent sequences.

The appropriate analogs for graphons in this case are functions on the disjoint union of unit cubes of dimension greater than or equal to $2$.
A complexon is a measurable\footnote{In this paper, ``measurable'' means Borel-measurable. The properties considered are not sensitive to differences on sets of measure zero, so one could also consider the Lebesgue measure.} function
\begin{equation*}
  W:\djunion_{d\geq 1}[0,1]^{d+1}\to[0,1],
\end{equation*}
such that $W$ is measurable and totally symmetric.
That is to say, the restriction of $W$ to each $[0,1]^{d+1}$ is measurable and symmetric in its coordinates.
We find it convenient to assume that $W([0,1])=\{1\}$ and $W()=1$, \ie, $W$ evaluated on the ``empty tuple'' has unit value.

In the same way that a simplicial complex as a collection of sets closed under restriction can be equivalently described by its facets, we formulate a faceted complexon.
For a complexon $W$, the faceted complexon $\faceted{W}$ is a totally symmetric measurable function $\faceted{W}:\djunion_{d\geq 1}[0,1]^{d+1}\to[0,1]$ obeying the following rule.
For all $d\geq 1, (x_1,\ldots,x_{d+1})\in[0,1]^{d+1}$, put
\begin{equation*}
  \faceted{W}(x_1,\ldots,x_{d+1}) = \prod_{\sigma\subseteq[d+1]}W(x_\sigma),
\end{equation*}
where $x_\sigma$ denotes the coordinates of $(x_1,\ldots,x_{d+1})$ indexed by $\sigma$ (this is well-defined, by the total symmetry of the complexon $W$).
Prematurely interpreting the values taken by $W$ as ``probabilities'' of simplices existing conditioned on the existence of their faces, the faceted complexon $\faceted{W}$ intuitively describes the probability of a simplex existing.
For instance, if $W(x_1,x_2,x_3)=1$, but $W(x_1,x_2)=W(x_1,x_3)=W(x_2,x_3)=0.5$, then $\faceted{W}(x_1,x_2,x_3)=0.125$, reflecting the closure of a simplicial complex under restriction.

\subsection{Complexons as Random Simplicial Complex Models}
\label{sec:complexons:sampling}

A complexon induces a distribution of random simplicial complexes via a simple sampling procedure.
Let $W$ be a complexon, and $S_n=\{x_1,\ldots,x_n\}$ be a set of $n$ points contained in $[0,1]$, for some integer $n\geq 1$.
From $W$ and $S_n$, define a weighted simplicial complex $H=W[S_n]$ so that $\Vert(H)=[n], H=2^{[n]},$ and each $\sigma\in H$ has weight $\omega(\sigma)=W(x_\sigma)$.
From the weighted simplicial complex $H$, we randomly sample an unweighted simplicial complex in the following way.
Let $K$ be a simplicial complex, and put $\Vert(K)=[n]$.
Inductively for $d\geq 1$, for each $d+1$-subset $\sigma\subseteq[n]$ such that every strict subset of $\sigma$ is contained in $K$, include $\sigma$ in $K$ with probability $\beta(\sigma)$ (\ie, the weight of the simplex $\sigma$ in $H$).
Upon termination, this yields a finite simplicial complex $K$.
We call the distribution from which $K$ is drawn $\sample{K}(H)$.

Noting that $H$ is defined based on the complexon $W$ via the set $S_n$, we randomize the set $S_n$ to yield a more general random simplicial complex model.
Let $S_n$ be such that $x_1,\ldots,x_n$ are \iid{} samples from the uniform distribution probability distribution on $[0,1]$.
Then, the random simplicial complex sampled from $W[S_n]$ with random $S_n$ is denoted $\sample{K}(n,W)$.

The distribution $\sample{K}(n,W)$ can be thought of in the same way as the lower model of~\citet{Farber2022}.
Indeed, it can be constructed by taking the largest simplicial complex contained in a random hypergraph on node set $[n]$ whose hyperedges $\sigma\subseteq[n]$ exist independently each with probability $W(x_\sigma)$.
Moreover, the faceted complexon $\faceted{W}$ can be used via the upper model of~\citet{Farber2022} to yield the distribution $\sample{K}(n,W)$.
Again, take a random hypergraph $H$ on the node set $[n]$ whose hyperedges $\sigma\subseteq[n]$ exist independently each with probability $\faceted{W}(x_\sigma)$.
Taking $K$ to be the smallest simplicial complex that contains $H$, it can be shown that $K$ is distributed according to $\sample{K}(n,W)$.
We go in to more detail on the relationship between complexons and hypergraph limits in \cref{sec:remarks:hypergraphs}.

\subsection{Homomorphisms in Complexons}

Motivated by the random sampling model $\sample{K}(n,W)$, the homomorphism densities $\thom$ and $\tind$ can be naturally generalized to complexons.
Let $F$ be a simplicial complex, and identify $\Vert(F)=[n]$ for some $n\geq 1$.
The homomorphism density of $F$ in a complexon $W$ asks the following question: what is the probability that $F$ is contained in the random simplicial complex $\sample{K}(n,W)$?
Based on this question, we directly define the homomorphism density of $F$ in a complexon $W$.
Put
\begin{equation}\label{eq:thom-graphon}
  \thom(F,W) = \int_{[0,1]^{\Vert(F)}}\prod_{\sigma\in F}W(x_\sigma)\drv{x}.
\end{equation}
The question of the homomorphism density can also be posed in terms of facets.
Put
\begin{equation}\label{eq:thom-faceted}
  \thom(\faceted{F},\faceted{W}) = \int_{[0,1]^{\Vert(F)}}\prod_{\sigma\in \faceted{F}}\faceted{W}(x_\sigma)\drv{x}.
\end{equation}
A simple comparison of the integrands of \eqref{eq:thom-graphon} and \eqref{eq:thom-faceted} yields the following result, further justifying the name ``faceted complexon'' for $\faceted{W}$.

\begin{lemma}\label{lemma:thom-equiv}
  For any simplicial complex $F$ and complexon $W$, it holds that $\thom(F,W)=\thom(\faceted{F},\faceted{W})$.
\end{lemma}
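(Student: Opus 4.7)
The plan is to verify the equality by a direct comparison of the integrands, as suggested by the remark immediately preceding the lemma. Concretely, I would start from the defining expression
\[
\thom(\faceted{F},\faceted{W}) = \int_{[0,1]^{\Vert(F)}}\prod_{\sigma\in\faceted{F}}\faceted{W}(x_\sigma)\,\drv{x},
\]
substitute the definition $\faceted{W}(x_\sigma) = \prod_{\tau\subseteq\sigma}W(x_\tau)$ into each factor, and obtain the double product
\[
\prod_{\sigma\in\faceted{F}}\prod_{\tau\subseteq\sigma}W(x_\tau).
\]
From here, the goal is to re-index this double product as a single product over $\tau\in F$, so that the integrand matches that of \eqref{eq:thom-graphon}. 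The equality of the integrals then follows immediately.

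The one combinatorial step that needs to be justified is the enumeration claim: iterating $\sigma$ over $\faceted{F}$ and then $\tau$ over all subsets of $\sigma$ traverses exactly the simplices of $F$. This is precisely the content of the statement that a simplicial complex is the closure of its facets under restriction: every $\tau\in F$ is contained in some facet $\sigma\in\faceted{F}$ (so $\tau$ appears in the product), and every subset of a facet lies in $F$ (so nothing extraneous appears). This is the only nontrivial step and, as the author bills it, a ``simple comparison'' of integrands; I expect no serious obstacle beyond bookkeeping of this enumeration. Once the integrand is rewritten as $\prod_{\tau\in F}W(x_\tau)$, integrating over $[0,1]^{\Vert(F)}$ recovers $\thom(F,W)$ by \eqref{eq:thom-graphon}, completing the argument.
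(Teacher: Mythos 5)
The ``one combinatorial step'' you isolate is precisely where the argument fails, and the failure is one of \emph{multiplicity} rather than coverage. You are right that every $\tau\in F$ is a subset of some facet and that every subset of a facet lies in $F$, so the index set of pairs $(\sigma,\tau)$ with $\sigma\in\faceted F$ and $\tau\subseteq\sigma$ projects \emph{onto} exactly $F$. But that projection is not injective: a simplex $\tau$ of codimension at least one can sit inside several facets, and each such facet contributes its own factor $W(x_\tau)$ to the double product. Concretely, take $F$ on node set $\{1,2,3,4\}$ with facets $\{1,2,3\}$ and $\{1,2,4\}$. The edge $\{1,2\}$ lies in both facets, so the integrand of \eqref{eq:thom-faceted} contains the factor $W(x_{\{1,2\}})^2$ where the integrand of \eqref{eq:thom-graphon} contains $W(x_{\{1,2\}})$ only once (the shared subsets $\emptyset,\{1\},\{2\}$ are also doubled, but those contribute $1$ by the convention on $W$). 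For instance, with $W\equiv p$ on every dimension $d\geq 1$, this $F$ gives $\thom(F,W)=p^{7}$ but $\thom(\faceted F,\faceted W)=p^{8}$, so the asserted equality of integrands, and hence of integrals, fails whenever $W$ is not $\{0,1\}$-valued.

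This is not a defect you introduced: the paper's own justification is the one-line remark that ``a simple comparison of the integrands'' yields the result, and your write-up is a faithful expansion of that remark, resting on the same implicit (and false) assumption that each simplex of $F$ belongs to a unique facet. The identity does hold when $F$ has dimension one, since the only faces shared across facets are then vertices and $\emptyset$, both neutral by convention; but for general $F$ the claimed re-indexing of $\prod_{\sigma\in\faceted F}\prod_{\tau\subseteq\sigma}W(x_\tau)$ as $\prod_{\tau\in F}W(x_\tau)$ cannot be salvaged without either restricting to complexes whose facets intersect only in dimension zero, or redefining the faceted side so that each face is weighted exactly once across the whole complex rather than once per facet containing it.
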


In the case of graphons, the induced homomorphism density is computed in a fashion similar to \eqref{eq:thom-graphon}, but with a product over all antifacets included in the integrand.
For simplicial complexes, taking the product over all non-simplices would be redundant, as the non-existence of an edge, for instance, implies the non-existence of all higher-order simplices that would contain that edge.
Treating a graph with no isolated nodes as a simplicial complex of dimension one, notice that the facets of a graph are its edges, and the antifacets are its non-edges.
Based on this, we write the induced homomorphism density of $F$ in $W$ as follows:
\begin{equation*}
  \tind(F,W) = \int_{[0,1]^{\Vert(F)}}\prod_{\sigma\in F}W(x_\sigma)\prod_{\tau\in\antifaceted{F}}\left(1-W(x_\tau)\right)\drv{x}.
\end{equation*}
The following result can be shown by unrolling definitions, in order to relate the induced homomorphism densities and random samples of a complexon:
\begin{lemma}\label{lemma:ind-sample}
  For a complexon $W$ and a simplicial complex $F$ such that $\Vert(F)=[n]$, it holds that
  \begin{equation*}
    \tind(F,W) = \Pr\left\{\sample{K}(n,W)=F\right\}.
  \end{equation*}
\end{lemma}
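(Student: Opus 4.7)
The plan is to unroll the sampling procedure that defines $\sample{K}(n,W)$ and show, by conditioning on the i.i.d.\ coordinates $x_1,\ldots,x_n$, that the probability of obtaining exactly $F$ factors into the very product appearing in the definition of $\tind(F,W)$.

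First, I would fix a realization of $x_1,\ldots,x_n$ and describe the event $\{\sample{K}(n,W)=F\}$ in terms of the independent Bernoulli decisions made during the inductive construction. For each $\sigma \in F$ with $\dim\sigma \geq 1$, every strict subset of $\sigma$ lies in $F$ (since $F$ is a simplicial complex), so at the appropriate stage $\sigma$ is offered for inclusion and must be accepted, which occurs with conditional probability $W(x_\sigma)$. For $\dim\sigma = 0$, the node set is included by convention and the factor $W(x_\sigma)$ equals $1$; similarly for the empty set. For each antifacet $\tau \in \antifaceted{F}$, by definition every strict subset of $\tau$ lies in $F$, so $\tau$ is likewise offered at its stage and must be rejected, which occurs with conditional probability $1-W(x_\tau)$.

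The key observation to close the argument is that no additional constraints are needed beyond the simplices of $F$ and the antifacets: any $\rho \notin F$ that is not an antifacet of $F$ contains at least one strict subset that is also not in $F$, hence by induction on dimension $\rho$ is never offered for inclusion (the condition ``every strict subset of $\sigma$ is contained in $K$'' fails), and so contributes no factor. Thus on each configuration leading to $K=F$, the Bernoulli decisions at distinct simplices are conditionally independent given $x_1,\ldots,x_n$, yielding
\begin{equation*}
  \Pr\{\sample{K}(n,W)=F \mid x_1,\ldots,x_n\} = \prod_{\sigma\in F} W(x_\sigma)\prod_{\tau\in\antifaceted{F}}\bigl(1-W(x_\tau)\bigr).
\end{equation*}
Taking expectation over the product uniform measure on $[0,1]^n$ and comparing with the integral defining $\tind(F,W)$ finishes the proof.

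The main obstacle, and the one point that should be argued carefully, is the combinatorial claim that exactly the factors indexed by $F$ and $\antifaceted{F}$ appear: one must check that higher non-simplices are automatically excluded by the inductive rule so that no further factors of $1-W(x_\rho)$ are required, and conversely that no simplex of $F$ is skipped by the rule. Both follow from the fact that $F$ is closed under restriction and that antifacets are by definition the minimal non-members.
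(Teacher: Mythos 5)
Your proof is correct and is exactly the ``unrolling of definitions'' that the paper alludes to but does not write out. You correctly identify the three cases (simplices of $F$ are offered and must be accepted; antifacets are offered and must be rejected; all other non-simplices are never offered because some strict subset is already absent from the partial complex), observe that the Bernoulli decisions are conditionally independent given $x_1,\ldots,x_n$, and integrate the resulting product against the uniform measure to recover the defining integral of $\tind(F,W)$.
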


In graph theory, we are typically concerned with properties of graphs that do not change under the relabeling of nodes.
Similarly, in graph limit theory, we typically describe properties of complexons up to measure-preserving transformations of $[0,1]$.
For complexons, a property of this type holds for homomorphism densities.
Let $\phi:[0,1]\to[0,1]$ be a measure-preserving transformation, and for a complexon $W$, define
\begin{equation*}
  W^\phi(x_1,\ldots,x_{d+1}) = W_2(\phi(x_1),\ldots,\phi(x_{d+1}))
\end{equation*}
for all $d\geq 1$ and $x\in[0,1]^{d+1}$.
The following result proceeds directly from the definition of the (induced) homomorphism density.

\begin{lemma}\label{lemma:thom-transform}
  Let $W$ be a complexon, and $\phi:[0,1]\to[0,1]$ a measure-preserving transformation.
  Then, for all simplicial complexes $F$,
  \begin{align*}
    \thom(F,W) &= \thom(F,W^\phi) \\
    \tind(F,W) &= \tind(F,W^\phi).
  \end{align*}
\end{lemma}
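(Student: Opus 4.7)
The plan is to reduce both identities to the standard change-of-variables formula for measure-preserving maps applied to the product map
\begin{equation*}
  \Phi : [0,1]^{\Vert(F)} \to [0,1]^{\Vert(F)}, \qquad \Phi(x_1,\ldots,x_n) = (\phi(x_1),\ldots,\phi(x_n)),
\end{equation*}
where $n = \vert(F)$. First I would verify that $\Phi$ is Borel-measurable and measure-preserving on $[0,1]^n$ equipped with Lebesgue measure. This follows by a one-line Fubini argument: for any product rectangle $\prod_{i=1}^n B_i$ with $B_i$ Borel in $[0,1]$,
\begin{equation*}
  \Phi^{-1}\Bigl(\prod_i B_i\Bigr) = \prod_i \phi^{-1}(B_i),
\end{equation*}
which has Lebesgue measure $\prod_i |\phi^{-1}(B_i)| = \prod_i |B_i|$ since $\phi$ itself is measure-preserving. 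By the $\pi$-$\lambda$ theorem this extends to all Borel sets, so $\Phi$ is measure-preserving.

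Next I would unroll the definition of $\thom(F,W^\phi)$. For any $\sigma \in F$, the $\sigma$-coordinate projection of $\Phi(x)$ is exactly $(\phi(x))_\sigma$, so
\begin{equation*}
  \prod_{\sigma\in F} W^\phi(x_\sigma) = \prod_{\sigma\in F} W(\phi(x)_\sigma) = \Bigl(\prod_{\sigma\in F} W(\,\cdot\,_\sigma)\Bigr)\circ \Phi(x).
\end{equation*}
The change-of-variables formula for the measure-preserving $\Phi$ then gives
\begin{equation*}
  \thom(F,W^\phi) = \int_{[0,1]^n} \Bigl(\prod_{\sigma\in F} W(\,\cdot\,_\sigma)\Bigr)\circ \Phi \, \drv x = \int_{[0,1]^n} \prod_{\sigma\in F} W(y_\sigma) \, \drv y = \thom(F,W).

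\end{equation*}

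The induced case is identical. The integrand defining $\tind(F,W)$ consists of factors $W(x_\sigma)$ for $\sigma\in F$ and $(1-W(x_\tau))$ for $\tau\in\antifaceted{F}$; each such factor pulled back by $\Phi$ becomes the corresponding factor evaluated at $(\phi(x))_\sigma$ or $(\phi(x))_\tau$, so the whole integrand pulls back to the $\tind(F,W^\phi)$ integrand. Applying the same change of variables yields $\tind(F,W^\phi) = \tind(F,W)$.

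There is no real obstacle here; the one subtlety worth naming is the passage from $\phi$ being measure-preserving on $[0,1]$ to $\Phi$ being measure-preserving on $[0,1]^n$, which needs the product-measure argument above rather than being a direct restatement of the hypothesis. Everything else is symbolic substitution using the total symmetry of $W$ (so that the notation $W(x_\sigma)$ is unambiguous for unordered $\sigma$) and linearity of the integral.
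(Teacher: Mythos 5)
Your proof is correct and is exactly the argument the paper has in mind; the paper simply asserts that the lemma ``proceeds directly from the definition'' without writing it out. The one step you rightly flag as needing a word --- that the coordinatewise map $\Phi$ inherits measure-preservation from $\phi$ via the product-rectangle/$\pi$-$\lambda$ argument --- is the only content beyond symbolic substitution, and you handle it correctly.
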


\subsection{The Cut Distance}

Similar to the family of cut-distances for simplicial complexes, we define cut-distances for complexons.
Let $W_1,W_2$ be complexons, and $d\geq 1$ some integer.
The $d$-dimensional labeled cut-distance between $W_1$ and $W_2$ is defined as follows:
\begin{equation*}
  \labelcutd_{\ncut{d}}(W_1,W_2) = \sup_{S_1,\ldots,S_{d+1}\in\Borel[0,1]}\left|\int_{\prod_j S_j}\left(W_1(x)-W_2(x)\right)\drv{x}\right|,
\end{equation*}
where $\Borel[0,1]$ denotes the Borel $\sigma$-field on the interval $[0,1]$.
This relates to the cut-norm defined for general measurable functions $U:\djunion_{d\geq 1}[0,1]^{d+1}\to\reals$:
\begin{equation*}
  \|U\|_{\ncut{d}} =
  \sup_{S_1,\ldots,S_{d+1}\in\Borel[0,1]}\left|\int_{\prod_j S_j}U(x)\drv{x}\right|,
\end{equation*}
so that $\labelcutd_{\ncut{d}}(W_1,W_2)=\|W_1-W_2\|_{\ncut{d}}$.
Alternatively, the cut-norm can be written as
\begin{equation*}
  \|U\|_{\ncut{d}} =
  \sup_{f_1,\ldots,f_{d+1}:[0,1]\to[0,1]}\left|\int_{[0,1]^{d+1}}U(x)f_1(x_1)\cdots f_{d+1}(x_{d+1})\drv{x}\right|,
\end{equation*}
where $f_1,\ldots,f_{d+1}$ are taken to be measurable functions from $[0,1]$ to $[0,1]$.

Following \citep[Eq.~7.2]{Borgs2008}, restricting the sets considered in the $d$-dimensional labeled cut-distance to be pairwise disjoint only changes the distance by a constant factor depending on $d$.
\begin{lemma}\label{lemma:disjoint-cut}
  Let $W_1,W_2\in\kerns$ be complexons, and let $d\geq 1$.
  Put $\mathcal{S}$ as the collection of pairwise disjoint sets $S_1,\ldots,S_{d+1}\in\Borel[0,1]$.
  Then,
  \begin{equation*}
    \sup_{(S_1,\ldots,S_{d+1})\in\mathcal{S}}\left|\int_{\prod_j S_j}(W_1(x)-W_2(x))\drv{x}\right|
    \leq
    \frac{1}{(d+1)^{d+1}}\labelcutd_{\ncut{d}}(W_1,W_2).
  \end{equation*}
\end{lemma}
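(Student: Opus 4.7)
My plan is a random-coloring / pigeonhole argument that controls the two suprema up to the factor $(d+1)^{d+1}$. Let $U = W_1 - W_2$. Rather than attacking the left-hand side directly, I will begin with an arbitrary (possibly overlapping) Borel configuration $(S_1,\ldots,S_{d+1})$ approximating the full cut-norm $\labelcutd_{\ncut{d}}(W_1,W_2)$ to arbitrary precision, and extract from it a pairwise disjoint sub-configuration $(T_1,\ldots,T_{d+1}) \in \mathcal{S}$ that captures a factor of $\frac{1}{(d+1)^{d+1}}$ of its integral in absolute value. Passing to suprema at the end will then yield the stated relationship between the disjoint-restricted supremum and the full labeled cut-distance with the required constant factor.

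The extraction proceeds via a random labeling. Assign each point $x \in [0,1]$ an independent, uniformly distributed color $\ell(x) \in [d+1]$, and set $T_j = S_j \cap \ell^{-1}(\{j\})$, so that the $T_j$ are automatically pairwise disjoint. Since almost every tuple $(x_1,\ldots,x_{d+1}) \in [0,1]^{d+1}$ has distinct coordinates, the events $\{\ell(x_j) = j\}$ are independent across $j$, and Fubini gives
\begin{equation*}
  \mathbb{E}\!\left[\int_{\prod_j T_j}\! U(x)\, \drv{x}\right]
  = \int_{\prod_j S_j}\! U(x) \prod_{j=1}^{d+1} \Pr\{\ell(x_j) = j\}\, \drv{x}
  = \frac{1}{(d+1)^{d+1}} \int_{\prod_j S_j}\! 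U(x)\, \drv{x}.
\end{equation*}
A first-moment argument then extracts a deterministic realization of $\ell$ for which $\bigl|\int_{\prod_j T_j} U\bigr|$ is at least $\frac{1}{(d+1)^{d+1}}\bigl|\int_{\prod_j S_j} U\bigr|$, which is the required comparison between an arbitrary overlapping integral and a particular disjoint one.

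The main technical obstacle is the joint measurability of the random field $\ell$ on the product space $[0,1]^{d+1}$ — a subtlety easy to miss, since writing $\mathbf{1}_{\ell^{-1}(\{j\})}$ as a function on $[0,1]$ inside a product integral implicitly invokes a measurable random field. The cleanest sidestep is derandomization by quantization: partition $[0,1]$ into dyadic intervals at scale $2^{-n}$, enumerate the finitely many deterministic cell-labelings, apply the averaging argument combinatorially at fixed $n$, and pass to $n \to \infty$ using $L^1$-continuity of $U$ under refinement. Equivalently, one may avoid probability entirely by decomposing $\prod_j S_j$ along the Venn-diagram atoms of $(S_1,\ldots,S_{d+1})$, indexed by coordinate-color functions $c : [d+1] \to [d+1]$, and invoking pigeonhole among the $(d+1)^{d+1}$ resulting pieces to pick one contributing at least the average, which is where the constant $(d+1)^{d+1}$ enters combinatorially.
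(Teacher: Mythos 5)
Your argument is essentially the paper's own proof. Writing $U=W_1-W_2$, the paper takes an $n$-equipartition $\{A_i\}_{i=1}^n$, an i.i.d.\ uniform cell-coloring $I\in[d+1]^n$, sets $B_j=\bigcup_{m:I(m)=j}A_m$, computes that $\Ex\bigl[\int_{\prod_j(S_j\cap B_j)}U\bigr]$ tends to $(d+1)^{-(d+1)}\int_{\prod_j S_j}U$ as $n\to\infty$, and combines this with $\Ex\bigl[\bigl|\int_{\prod_j(S_j\cap B_j)}U\bigr|\bigr]\le\sup_{\mathcal{S}}\bigl|\int_{\prod_j T_j}U\bigr|$ via Jensen's inequality. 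Your ``derandomization by quantization'' is exactly this $n$-equipartition device (dyadic cells being one choice of equipartition), your first-moment extraction is an equivalent substitute for Jensen, and your Venn-atom pigeonhole is the same count stated non-probabilistically. Two small corrections, though. The $n\to\infty$ limit works not because of ``$L^1$-continuity of $U$ under refinement'' but because the diagonal region (tuples with two coordinates falling in the same cell) has measure $O(1/n)$ while $|U|\le 1$, so its contribution vanishes. More importantly, what your argument actually establishes -- and what the paper's proof establishes -- is the \emph{reverse} of the printed inequality, namely
\begin{equation*}
  \sup_{(S_1,\ldots,S_{d+1})\in\mathcal{S}}\left|\int_{\prod_j S_j}(W_1(x)-W_2(x))\,\drv{x}\right|
  \;\geq\;
  \frac{1}{(d+1)^{d+1}}\,\labelcutd_{\ncut{d}}(W_1,W_2).
\end{equation*}
That is the substantive direction (the disjoint supremum trivially does not exceed the unrestricted one; the content is that it cannot be much smaller). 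The $\leq$ printed in the lemma is a sign error and is false as written: for $d=1$ take $U$ to be the indicator of $\bigl([0,\tfrac12]\times[\tfrac12,1]\bigr)\cup\bigl([\tfrac12,1]\times[0,\tfrac12]\bigr)$, for which $\labelcutd_{\ncut{1}}(W_1,W_2)=\tfrac12$ while the disjoint supremum is $\tfrac14$, and $\tfrac14\not\le\tfrac18$. Your closing claim to have reached ``the stated relationship'' should have flagged this: you proved the right inequality, but it is the opposite of what the lemma says.
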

The proof follows the presentation of \citet[Lemma~E.2]{Janson2013}.
\begin{proof}
  Let $\partition{A}=\{A_i\}_{i=1}^n$ be an $n$-equipartition of $[0,1]$ (that is, a partition of $[0,1]$ into $n$ measurable subsets all with measure $1/n$).
  Let $I$ be an element of $[d+1]^n$ chosen uniformly at random.
  For each $j\in[d+1]$, define $B_j=\bigcup_{m:I(m)=j}A_m$.

  For any measurable sets $S_1,\ldots,S_{d+1}$, the sets $S_1\cap B_1,\ldots,S_{d+1}\cap B_{d+1}$ are pairwise disjoint, so that
  \begin{equation}\label{eq:disjoint-cut-bound1}
    \Ex\left[\left|\int_{\prod_j S_j\cap B_j}(W_1(x)-W_2(x))\drv{x}\right|\right]
    \leq
    \sup_{(S_1,\ldots,S_{d+1})\in\mathcal{S}}\left|\int_{\prod_j S_j}(W_1(x)-W_2(x))\drv{x}\right|.
  \end{equation}
  Moreover,
  \begin{equation}\label{eq:disjoint-cut-bound2}
    \Ex\left[\int_{\prod_j S_j\cap B_j}(W_1(x)-W_2(x))\drv{x}\right] =
    \sum_{i_1\neq\ldots\neq i_{d+1}\in[n]}\int_{\prod_j S_j\cap A_{i_j}}\frac{W_1(x)-W_2(x)}{(d+1)^{d+1}}\drv{x}.
  \end{equation}
  One can see that the right-hand side of \eqref{eq:disjoint-cut-bound2} approaches the quantity
  \begin{equation*}
    \frac{1}{(d+1)^{d+1}}\int_{\prod_j S_j}(W_1(x)-W_2(x))\drv{x}
  \end{equation*}
  as $n\to\infty$.
  Combining \eqref{eq:disjoint-cut-bound1} and \eqref{eq:disjoint-cut-bound2} concludes the proof.
\end{proof}

As the labeled $d$-dimensional cut-distance only considers the value of complexons on $[0,1]^{d+1}$, we define a cut-distance that considers all dimensions.
Let $(\alpha_j)_{j\geq 1}$ be a nonnegative, summable sequence of real numbers.
For complexons $W_1,W_2$, put
\begin{equation*}
  \labelcutd_{\cut}(W_1,W_2;(\alpha_j)_{j\geq 1}) = \sum_{j\geq 1}\alpha_j\labelcutd_{\ncut{j}}(W_1,W_2).
\end{equation*}
One can clearly see that for any complexons $W_1,W_2$,
\begin{equation}\label{eq:norm-domination}
  \labelcutd_{\cut}(W_1,W_2;(\alpha_j)_{j\geq 1}) \leq
  \sum_{j\geq 1}\alpha_j\int_{[0,1]^{j+1}}|W_1(x)-W_2(x)|\drv{x}.
\end{equation}
We extend this definition to finite nonnegative sequences $\alpha_1,\ldots,\alpha_d$ by implicitly assuming that $\alpha_j=0$ for $j>d$.

The labeled cut-distance implicitly assumes a correspondence between the domains of $W_1$ and $W_2$.
Intuitively, this corresponds to coupling the distributions $\sample{K}(n,W_1)$ and $\sample{K}(n,W_2)$ (for any $n\geq 1$) in such a way that the points $x_1,\ldots,x_n\in[0,1]$ are always equal for $\sample{K}(n,W_1)$ and $\sample{K}(n,W_2)$.

To decouple the domains, we define the unlabeled cut-distance between $W_1$ and $W_2$ by taking the infimum over measure preserving transformations of the domain.
Put, again for a (finite or infinite) nonnegative summable sequence $(\alpha_j)_{j\geq 1}$,
\begin{equation*}
  \cutd_{\cut}(W_1,W_2;(\alpha_j)_{j\geq 1}) =
  \inf_{\phi:[0,1]\to[0,1]}\labelcutd_{\cut}(W_1,W_2^\phi;(\alpha_j)_{j\geq 1}),
\end{equation*}
where $\phi$ ranges over measure-preserving transformations.
Similar to \eqref{eq:norm-domination}, we have the inequality
\begin{equation*}
  \cutd_{\cut}(W_1,W_2;(\alpha_j)_{j\geq 1}) \leq
  \inf_{\phi:[0,1]\to[0,1]}\sum_{j\geq 1}\alpha_j\int_{[0,1]^{j+1}}|W_1(x)-W_2^\phi(x)|\drv{x}.
\end{equation*}

The definition of the unlabeled cut-distance for simplicial complexes in terms of the limiting distance under blowups motivates an immediate representation of simplicial complexes as complexons, closely resembling the representation of graphs as graphons via ``pixel pictures.''
For a simplicial complex $K$ whose nodes are identified as $\Vert(K)=[n]$, define a complexon $W_K$ in the following way.
Define the sets $P_j=\hintCO{(j-1)/n,j/n}$ for $j\in[n]$.
Then, for each $\{j_1,\ldots,j_{d+1}\}\in K$, put $W(x_1,\ldots,x_{d+1})=1$ for all permutations of $(x_1,\ldots,x_{d+1})\in\prod_{\ell=1}^{d+1}P_{j_\ell}$.
Otherwise, put $W_K(x_1,\ldots,x_{d+1})=0$.
If instead of a simplicial complex $K$ we have a weighted simplicial complex $(H,\omega)$, we put $W_H(x_1,\ldots,x_{d+1})=\omega(\{j_1,\ldots,j_{d+1}\})$.

For a simplicial complex $K$ and a complexon $W$, we will find it convenient to use the notation $\labelcutd_{\cut}(K,W)$ and $\cutd_{\cut}(K,W)$ to refer to $\labelcutd_{\cut}(W_K,W)$ and $\cutd_{\cut}(W_K,W)$, respectively.
Moreover, the complexon corresponding to a simplicial complex preserves the cut-distance and all homomorphism densities.

\begin{lemma}\label{lemma:finite-to-complexon-cut}
  For any simplicial complexes $F,K$ and any nonnegative, summable sequence $(\alpha_j)_{j\geq 1}$,
  \begin{align*}
    \thom(F,K) &= \thom(F,W_K) \\
    \cutd_{\cut}(F,K;(\alpha_j)) &= \cutd_{\cut}(W_F,W_K;(\alpha_j)).
  \end{align*}
  Moreover, if $\Vert(F)=\Vert(K)$, then, under any identification of $\Vert(F)$ with $[n]$,
  \begin{equation*}
    \labelcutd_{\cut}(F,K;(\alpha_j)) = \labelcutd_{\cut}(W_F,W_K;(\alpha_j)).
  \end{equation*}
  Furthermore, if $(H,\omega)$ is a weighted simplicial complex such that $\Vert(F)=\Vert(H)$, then
  \begin{equation*}
    \labelcutd_{\cut}(F,H;(\alpha_j)) = \labelcutd_{\cut}(W_F,W_H;(\alpha_j)).
  \end{equation*}
\end{lemma}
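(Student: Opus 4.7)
The plan is to reduce each equality to a discrete calculation by exploiting that $W_K$ is a step function on the equipartition $P_1,\ldots,P_{\vert(K)}$ of $[0,1]$, and similarly for $W_F$, $W_H$. I would organize the proof around three observations: a grid decomposition for the homomorphism density, multilinearity of the cut-norm in fractional weights for the labeled cut-distance, and block-permutation approximation of measure-preserving transformations for the unlabeled cut-distance.

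For $\thom(F,K)=\thom(F,W_K)$, with $n=\vert(K)$, I would split the integral over $[0,1]^{\Vert(F)}$ according to which $P_j$ each coordinate lies in:
\begin{equation*}
  \thom(F,W_K) = \sum_{\phi:\Vert(F)\to[n]}\int_{\prod_{v\in\Vert(F)} P_{\phi(v)}}\prod_{\sigma\in F}W_K(x_\sigma)\,\drv{x}.
\end{equation*}
On the box $\prod_v P_{\phi(v)}$, the factor $W_K(x_\sigma)$ equals $1$ exactly when $\phi(\sigma)\in K$ and $0$ otherwise, so the integrand is $1$ iff $\phi$ is a homomorphism $F\to K$. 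Each box has measure $n^{-\vert(F)}$, so the sum collapses to $\hom(F,K)/n^{\vert(F)}=\thom(F,K)$.

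For the two labeled cut-distance claims, set $n=\vert(F)=\vert(K)$ and prove the identity dimension by dimension, then sum against $(\alpha_j)$. The difference $W_F-W_K$ (respectively $W_F-W_H$) is constant on each grid cell $\prod_{\ell=1}^{d+1}P_{i_\ell}$. For measurable $S_1,\ldots,S_{d+1}\subseteq[0,1]$, writing $t_{j,i}=n|S_j\cap P_i|\in[0,1]$, the integral $\int_{\prod_j S_j}(W_F-W_K)\,\drv{x}$ becomes
\begin{equation*}
  \frac{1}{n^{d+1}}\sum_{i_1,\ldots,i_{d+1}\in[n]}t_{1,i_1}\cdots t_{d+1,i_{d+1}}\bigl(A_{F,d}(i_1,\ldots,i_{d+1})-A_{K,d}(i_1,\ldots,i_{d+1})\bigr),
\end{equation*}
with $A_{F,d}$, $A_{K,d}$ the $d$-simplex indicators from the paper's definition of $\labelcutd_{\ncut{d}}$. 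This expression is multilinear in each vector $(t_{j,\cdot})\in[0,1]^n$, so its supremum is attained at an extreme point $t_{j,i}\in\{0,1\}$, \ie, with each $S_j$ a union of blocks $P_i$. The supremum thus collapses to the discrete cut-norm of $A_{F,d}-A_{K,d}$, matching the definition of $\labelcutd_{\ncut{d}}(F,K)$. The weighted case is identical, with $A_{H,d}$ replaced by the weight-valued tensor built from $\omega$.

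The main obstacle is the unlabeled equality $\cutd_{\cut}(F,K;(\alpha_j))=\cutd_{\cut}(W_F,W_K;(\alpha_j))$, where the blowup-then-minimize-over-bijections formulation must be reconciled with the infimum-over-measure-preserving-transformations formulation. The key technical step is the blowup invariance $W_{mK}=W_K$ (almost everywhere): under the natural lexicographic identification of $\Vert(mK)$ with $[m|\Vert(K)|]$, the $m|\Vert(K)|$ blocks of $W_{mK}$ group in bunches of size $m$ into the original $|\Vert(K)|$ blocks of $W_K$, and the convention that a simplex has distinct vertices coincides in the two constructions. Combining this with the labeled equality already proven,
\begin{equation*}
  \unlabelcutd_{\cut}(mn_2 F, mn_1 K;(\alpha_j)) = \min_{\phi}\labelcutd_{\cut}(W_F, W_K^\phi;(\alpha_j)),
\end{equation*}
where $\phi$ ranges over measure-preserving transformations permuting the blocks of an $(n_1 n_2 m)$-equipartition of $[0,1]$. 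Every such block permutation is measure-preserving, which gives the $\geq$ direction; conversely, a standard approximation argument (\cf{}~\citet[Eq.~8.10]{Lovasz2012}) shows that every measure-preserving transformation of $[0,1]$ is arbitrarily well approximated in the cut-metric by block permutations on sufficiently fine equipartitions, giving the $\leq$ direction in the limit. Taking $m\to\infty$ yields the claimed equality.
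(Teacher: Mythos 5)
The paper states this lemma without proof, so there is no canonical argument to compare against; your blind attempt is a correct and reasonably complete verification. The grid decomposition for $\thom(F,K)=\thom(F,W_K)$ is sound once one reads the construction of $W_K$ (as the paper intends, and consistently with the definition of $A_{1,d}$) so that $W_K$ vanishes on ``diagonal'' cells $\prod_\ell P_{j_\ell}$ with repeated indices: on each box the integrand then is precisely the indicator that $\phi$ is a simplicial homomorphism. The multilinearity argument for the labeled cut-distance works in both the unweighted and weighted comparison, since $W_F-W_H$ and $A_{F,d}-A_{H,d}$ agree cell by cell including on the diagonal where $W_F$ and $A_{F,d}$ are zero but $W_H$ and $A_{H,d}$ may not be. For the unlabeled identity, the blowup invariance $W_{mK}=W_K$ holds exactly because the paper's blowup definition excludes degenerate simplices with repeated base vertices; together with the labeled identity this gives the $\geq$ inequality for each $m$, and the $\leq$ direction rests on approximating a general measure-preserving $\phi$ by block permutations on increasingly fine equipartitions. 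That last approximation step is the only place requiring genuine care, and your invocation of the standard fractional-overlay machinery (\cf{}~\citealt[Section~8.2]{Lovasz2012}) is legitimate; if you wanted a self-contained argument you would need to spell out why, for step functions on equipartitions, the infimum over measure-preserving bijections is attained in the $m\to\infty$ limit of block permutations on the common $m n_1 n_2$-refinement.
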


\section{Main Results}
\label{sec:space}

In this section, we characterize the space of complexons equipped with the cut-distance, and show how the cut-distance relates to the homomorphism densities.
In particular, we show that they induce equivalent, compact topologies on the space of complexons.
Our treatment closely follows that of~\citet{Lovasz2006,Borgs2008}.

\subsection{The Counting Lemma}

We begin by showing how closeness in the cut-distance yields closeness in the sense of homomorphism densities.
\begin{lemma}[Counting Lemma]\label{lemma:counting}
  Let $F$ be a simplicial complex.
  Define the sequences $\alpha_j=|(\faceted{F})^{(j)}|, \beta_j=|F^{(j)}|, \gamma_j=|F^{(j)}|+|(\antifaceted{F})^{(j)}|$ for $j\geq 1$.
  Then, for complexons $U,W$, the following three inequalities hold:
  \begin{align*}
    \left|\thom(F,U)-\thom(F,W)\right| &\leq \cutd_{\cut}(\faceted{U},\faceted{W};(\alpha_j)_{j\geq 1}) \\
    \left|\thom(F,U)-\thom(F,W)\right| &\leq \cutd_{\cut}(U,W;(\beta_j)_{j\geq 1}) \\
    \left|\tind(F,U)-\tind(F,W)\right| &\leq \cutd_{\cut}(U,W;(\gamma_j)_{j\geq 1}).
\end{align*}
\end{lemma}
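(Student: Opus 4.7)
The plan is a telescoping argument, analogous to the standard counting lemma for graphons, combined with the cut-norm's characterization via product test functions.

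Since $\thom(F,\cdot)$ and $\tind(F,\cdot)$ are invariant under measure-preserving transformations of $[0,1]$ by \cref{lemma:thom-transform}, we have $|\thom(F,U)-\thom(F,W)|=|\thom(F,U)-\thom(F,W^\phi)|$ for every measure-preserving $\phi$, and likewise for $\tind$. So it suffices to bound each left-hand side by $\labelcutd_\cut(U,W^\phi;\cdot)$ (or its faceted analogue) for each $\phi$ and then take an infimum to obtain the $\cutd_\cut$ form.

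For the second inequality, enumerate the simplices of $F$ as $\sigma_1,\ldots,\sigma_m$, with $m=\sum_j \beta_j$, and set
\begin{equation*}
  I_k = \int_{[0,1]^{\vert(F)}} \prod_{i\le k} W(x_{\sigma_i})\prod_{i>k} U(x_{\sigma_i})\,\drv x,
\end{equation*}
so $I_0=\thom(F,U)$ and $I_m=\thom(F,W)$. Then
\begin{equation*}
  \thom(F,U)-\thom(F,W)=\sum_{k=1}^m\int (U-W)(x_{\sigma_k})\,\Psi_k(x)\,\drv x,
\end{equation*}
with each $\Psi_k\in[0,1]$ a product of $U$- and $W$-values at the other simplices. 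Setting $d_k=\dim\sigma_k$, the goal is to bound the $k$-th term by $\|U-W\|_{\ncut{d_k}}$: for each fixed value of the variables outside $\sigma_k$, the factor $\Psi_k$ splits into a product of $[0,1]$-valued functions, one per coordinate of $\sigma_k$, so the product-test-function form of the cut-norm bounds the inner integral fiberwise; Fubini over the outside variables (total volume one) finishes. Summing over $k$ yields $\sum_j\beta_j\,\|U-W\|_{\ncut{j}}=\labelcutd_\cut(U,W;(\beta_j))$.

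The first inequality follows from the same telescoping applied to $\faceted{U},\faceted{W}$ summed over the facets of $F$, with \cref{lemma:thom-equiv} bridging the two presentations. The third telescopes both over the simplices of $F$ (factors $U,W$) and over the antifacets $\antifaceted{F}$ (factors $1-U,1-W$); using $(1-W)-(1-U)=U-W$, each antifacet swap also contributes a bound of $\|U-W\|_{\ncut{d}}$, producing the sum weighted by $\gamma_j=|F^{(j)}|+|(\antifaceted{F})^{(j)}|$.

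The main obstacle is the fiberwise factorization step. When some other simplex $\sigma_i$ shares two or more vertices with $\sigma_k$, the corresponding factor $F_i(x_{\sigma_i})$ depends simultaneously on several coordinates of $\sigma_k$ and cannot be absorbed into a single coordinate's test function. This never arises in the graphon case (distinct edges share at most one vertex) but is generic for higher-dimensional simplices. Resolving it without losing the tight constants $\alpha_j,\beta_j,\gamma_j$ will likely require ordering the telescoping by simplex dimension so that overlapping simplices appear in a controlled order, or inserting a separate inner estimate that integrates out the ``shared'' variables before invoking the cut-norm.
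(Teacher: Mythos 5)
Your telescoping decomposition matches the paper's exactly: enumerate the facets (respectively simplices, antifacets) of $F$, swap one factor at a time from $W$ to $U$, and obtain a sum of integrals, the $t$-th of which carries a single difference factor $(\faceted{U} - \faceted{W})(x_{\sigma_t})$. The difficulty you flag at the end is the crux. To absorb the $t$-th term into $\|\faceted{U} - \faceted{W}\|_{\ncut{\dim{\sigma_t}}}$ via the test-function form of the cut norm, the remaining product $\Psi_t$ (with the coordinates outside $\sigma_t$ fixed) must split as $f_1(x_{i_1})\cdots f_{\dim{\sigma_t}+1}(x_{i_{\dim{\sigma_t}+1}})$ with each $f_\ell:[0,1]\to[0,1]$. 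That holds automatically when distinct terms share at most one vertex, which is the graph case; but once $\dim F\ge 2$, two facets of $F$ routinely meet in a common face of positive dimension, and then $\Psi_t$ is not coordinatewise factorizable. Boundedness of $\Psi_t$, which is the only property the paper's proof notes before asserting the inequality, is necessary but not sufficient.

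Neither of your suggested repairs closes this gap: reordering the telescope by dimension does not change which simplices of $F$ overlap, and pre-integrating the shared variables is not meaningful because those are coordinates of $\sigma_t$ on which the difference factor itself depends. The obstruction is familiar from hypergraph limit theory: for $d\ge 2$, the product-set cut norm $\|\cdot\|_{\ncut{d}}$ does not in general control homomorphism densities of templates whose $d$-cells overlap in lower-dimensional faces. The gap is therefore shared by the paper's own proof as written, and any actual repair must change what is being bounded against---for instance, a box-type norm---or restrict the class of templates $F$, rather than reorder the telescope.
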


This result is analogous to \citep[Lemma~4.1]{Lovasz2006}, with a similar proof as well.

\begin{proof}
  We establish the first inequality, as the other two follow from a similar argument.
  Order the elements of $\faceted{F}$ as $\{\sigma_1,\ldots,\sigma_m\}$, where $m=|\faceted{F}|$.
  For $x\in[0,1]^{\Vert(F)}, t\in[m]$, define
  \begin{equation*}
    X_t(x) =
    \left(\prod_{s<t}\faceted{U}(x_{\sigma_s})\right)\cdot
    \left(\prod_{s>t}\faceted{W}(x_{\sigma_s})\right)\cdot
    \left(\faceted{U}(x_{\sigma_t})-\faceted{W}(x_{\sigma_t})\right).
  \end{equation*}
  One can check that
  \begin{align*}
    \left|\thom(F,U)-\thom(F,W)\right| &= \left|\int_{[0,1]^{\Vert(F)}}\sum_{t=1}^m X_t(x)\drv{x}\right| \\
                               &= \left|\sum_{t=1}^m\int_{[0,1]^{\Vert(F)\setminus\sigma_t}}\int_{[0,1]^{\sigma_t}}X_t(x)\prod_{j\in\sigma_t}\drv{x_j}\prod_{j\in \Vert(F)\setminus\sigma_t}\drv{x_j}\right|.
  \end{align*}
  For all $t\in[m]$, it holds that
  \begin{equation*}
    \prod_{s<t}\faceted{U}(x_{\sigma_s})\prod_{s>t}\faceted{W}(x_{\sigma_s})\in[0,1],
  \end{equation*}
  This implies via \cref{lemma:thom-equiv} that
  \begin{align*}
    \left|t(F,U)-t(F,W)\right| &\leq \left|\sum_{t=1}^m\labelcutd_{\ncut{\dim{\sigma_t}}}(\faceted{U},\faceted{W})\right| \\
                               &= \labelcutd_{\cut}(\faceted{U},\faceted{W};(\alpha_j)_{j\geq 1}).
  \end{align*}
  Taking the infimum over measure-preserving transformations $W^\phi$ yields the first inequality, via \cref{lemma:thom-transform}.
  The proofs of the second and third inequalities proceed similarly.
\end{proof}

This result is perhaps not too surprising.
Thinking of the cut-metric as describing the global differences between complexons, closeness globally forces closeness locally (that is, in the sense of homomorphism densities).
More formally, for any simplicial complex $F$, the homomorphism density $\thom(F,\cdot):\kerns\to[0,1]$ is continuous with respect to the cut-distance $\cutd_{\cut}(\cdot,\cdot;(\alpha_j)_{j\geq 1})$ for any strictly positive, summable sequence $(\alpha_j)_{j\geq 1}$.
Similarly, since $\tind(F,W)=\Pr\left\{\sample{K}(n,W)=F\right\}$ for any simplicial complex $F$ with $\Vert(F)=[n]$, closeness in the cut-metric forces two complexons to yield similar random sampling models, particularly for small $n$.
\subsection{The Sampling Lemma}

Before proving the inverse of \cref{lemma:counting}, we establish a concentration result for samples $\sample{K}(n,W)$: namely, we show that sufficiently large samples drawn from a complexon will be close in the cut-distance.
Formally,
\begin{lemma}[Sampling Lemma]\label{lemma:sampling}
  Let $W$ be a complexon.
  Let $n\geq 1$ be an integer, and $\alpha_1,\ldots,\alpha_d$ be a finite nonnegative sequence.
  Then, with probability at least $1-\exp\left(-n/(2\log_2{n})\right)$, it holds that
  \begin{equation*}
    \cutd_{\cut}(\faceted{W},\sample{K}(n,W);(\alpha_j))\leq
    \frac{8\cdot 2^d+1}{\sqrt{\log_2{n}}}\sum_{j=1}^d\alpha_j.
  \end{equation*}
\end{lemma}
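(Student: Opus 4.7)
The plan is to adapt the graphon sampling lemma of \citet[Chapter~10]{Lovasz2012} and \citet{Borgs2008} to the multi-dimensional complexon setting, treating each dimension $j\in[d]$ in turn and combining via the weights $\alpha_j$ at the end. Let $S_n=\{x_1,\ldots,x_n\}$ be the sampled points used to construct $K=\sample{K}(n,W)$, and let $H_n$ be the weighted simplicial complex on $[n]$ assigning weight $\faceted{W}(x_\sigma)$ to each $\sigma$, which equals the conditional probability $\Pr[\sigma\in K\mid x]$. A single measure-preserving transformation $\phi$, induced by the rank permutation of $(x_i)$, aligns the pixel complexon $W_K^\phi$ with $\faceted{W}$ simultaneously across all dimensions, so it suffices to control $\labelcutd_{\ncut{j}}$ under this fixed $\phi$ and then invoke the infimum definition of $\cutd_\cut$.

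For each $j\in[d]$, apply the triangle inequality
\begin{equation*}
  \labelcutd_{\ncut{j}}(\faceted{W},W_K^\phi) \leq \labelcutd_{\ncut{j}}(\faceted{W},W_{H_n}) + \labelcutd_{\ncut{j}}(W_{H_n},W_K^\phi).
\end{equation*}
The first term is handled by a weak-regularity-plus-concentration argument: approximate $\faceted{W}|_{[0,1]^{j+1}}$ in the $\ncut{j}$-norm within $\epsilon$ by a step function with $k=2^{O(1/\epsilon^2)}$ pieces in the style of \citet{Frieze1999}, reducing the cut-norm between $\faceted{W}$ and $W_{H_n}$ to the deviation of the empirical frequencies of $x_1,\ldots,x_n$ on each of the $k$ steps. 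Chernoff plus a union bound over the $k$ steps yields deviations $O(\sqrt{(\log k)/n})$; taking $\epsilon\asymp 1/\sqrt{\log_2 n}$ and $k\leq n/\log_2 n$ produces the bound $8\cdot 2^d/\sqrt{\log_2 n}$, with failure probability at most $\exp(-n/(2\log_2 n))$. The multiplicative $2^d$ absorbs the cost of approximating $\faceted{W}$, a product of up to $2^{j+1}\leq 2^{d+1}$ evaluations of $W$, by a product of step functions in cut-norm. For the second term, condition on $x$: then $\indic[\sigma\in K]=\prod_{\emptyset\neq\tau\subseteq\sigma}Z_\tau$ for independent Bernoullis $Z_\tau\sim\text{Ber}(W(x_\tau))$, so $\Ex[\indic[\sigma\in K]\mid x]=\faceted{W}(x_\sigma)$. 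For each rectangle $S_1\times\cdots\times S_{j+1}$ aligned with the $k$-step partition above, the deviation $\sum_\sigma(\indic[\sigma\in K]-\faceted{W}(x_\sigma))$ is a bounded-differences function of the $Z_\tau$'s, changing by at most $\binom{n-|\tau|}{j+1-|\tau|}$ when a single $Z_\tau$ is flipped. McDiarmid's inequality plus \cref{lemma:disjoint-cut} (to restrict to pairwise disjoint $S_i$) and a union bound over the at most $k^{j+1}$ aligned rectangles contributes the remaining $1/\sqrt{\log_2 n}$ term with the same exponentially small failure probability.

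Summing the per-dimension bounds with weights $\alpha_j$ under the common transformation $\phi$ gives the stated inequality. The main obstacle is the conditional concentration in the second step: at a fixed dimension $j$ the indicators $\indic[\sigma\in K]$ share common lower-dimensional coin flips and are thus correlated, precluding a direct Chernoff argument on the $(j+1)$-subsets alone. Bounded differences over the full collection $\{Z_\tau\}_{\tau\subseteq[n]}$ resolves this, but one must carefully balance the per-coin influence $\binom{n-|\tau|}{j+1-|\tau|}$ against the $k^{j+1}$ union-bound cost so that the resulting Gaussian tail still yields the $1/\sqrt{\log_2 n}$ rate with the claimed exponential failure probability.
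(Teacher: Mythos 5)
Your two-term decomposition collapses the paper's four-term triangle inequality into two blocks, and your second block --- bounding the discrepancy between the sampled weighted complex and the Bernoulli-sampled complex via McDiarmid over the independent coins $\{Z_\tau\}$ conditional on $x$ --- is a reasonable alternative to the paper's Chernoff--Hoeffding-plus-disjoint-sets calculation in \cref{lemma:exp-weighted-sample}; your bounded-differences scheme correctly addresses the correlations you flag among the $d$-simplex indicators. (The union bound, however, should run over the $(j+2)^n$ assignments of $[n]$ to $j+2$ disjoint classes or over the $2^{n(j+1)}$ subsets, not the $k^{j+1}$ you quote.)

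The first block has a genuine gap. After producing a Frieze--Kannan stepfunction $U$ with $\|\faceted{W}-U\|_{\ncut{j}}\leq\epsilon$, your ``reduction to empirical frequencies of $x_1,\ldots,x_n$ on each step'' only controls the discrepancy between $U$ and the sampled stepfunction $U[S]$, i.e., the analogue of the paper's term $\cutd_{\cut}(\faceted{W}_{\partition{P}},\faceted{W}_{\partition{P}}[S])$. You are implicitly assuming that $\|\faceted{W}[S]-U[S]\|_{\ncut{j}}$ is small because $\|\faceted{W}-U\|_{\ncut{j}}\leq\epsilon$, but cut-norm closeness does \emph{not} transfer through sampling: the cut norm is weak and gives no pointwise control, so the supremum over subsets of $S^{j+1}$ can witness a rectangle where the two functions disagree badly. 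That step is precisely the content of \cref{lemma:norm-concentrate}, the Alon-type norm-concentration lemma showing $\|U[X]\|_{\ncut{d}}$ concentrates around $\|U\|_{\ncut{d}}$, whose proof is the technical heart of \cref{sec:proofs:sampling} and has no stand-in in your outline. Two secondary points: the arithmetic ``$k\leq n/\log_2 n$ with $\epsilon\asymp 1/\sqrt{\log_2 n}$'' is not what $k=2^{O(1/\epsilon^2)}$ gives (the paper instead takes $m=\lceil n^{1/4}\rceil$ steps and accepts a $\sqrt{(d+1)/\log_2 n}$ rate); and the paper obtains the claimed exponential failure probability not by per-term union bounds but by first bounding the \emph{expectation} of $\cutd_{\cut}$ and then applying a single Azuma step (\cref{thm:sample-concentration}) to the ``reasonably smooth'' functional $f(F)=\vert(F)\cdot\cutd_{\cut}(F,\faceted{W};(\alpha_j))/\sum_j\alpha_j$; reproducing that tail directly from per-term unions would require redoing that balancing.
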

We leave the proof to \cref{sec:proofs:sampling}.
As a corollary of \cref{lemma:sampling}, it can be shown via the Borel-Cantelli Lemma that every faceted complexon $\faceted{W}$ arises as the limit of a sequence of simplicial complexes.
That is,
\begin{corollary}\label{coro:sampling-convergence}
  Let $\alpha_1,\ldots,\alpha_d$ be a finite nonnegative sequence.
  For a complexon $W$, the sequence of simplicial complexes $(\sample{K}(n,W))_{n\geq 1}$ converges to $\faceted{W}$ in the cut-distance $\cutd_{\cut}(\cdot,\cdot;(\alpha_j))$ with probability $1$.
\end{corollary}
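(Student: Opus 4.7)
The plan is a direct Borel--Cantelli argument built on top of \cref{lemma:sampling}. Fix a finite nonnegative sequence $\alpha_1,\ldots,\alpha_d$ and set $C = \sum_{j=1}^d \alpha_j$. By \cref{lemma:sampling}, for each $n \geq 1$ the event
\begin{equation*}
  E_n = \left\{\cutd_{\cut}(\faceted{W},\sample{K}(n,W);(\alpha_j)) > \frac{8\cdot 2^d + 1}{\sqrt{\log_2 n}}\, C\right\}
\end{equation*}
satisfies $\Pr(E_n) \leq \exp\bigl(-n/(2\log_2 n)\bigr)$.

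The key analytic observation is that the tail bound $\exp(-n/(2\log_2 n))$ is summable in $n$: since $n/(2\log_2 n)$ grows faster than, say, $2\log n$ for all sufficiently large $n$, the terms are eventually dominated by $n^{-2}$, whence $\sum_{n\geq 1}\Pr(E_n) < \infty$. Working on a common probability space that supports the whole sequence $(\sample{K}(n,W))_{n\geq 1}$ (take, for instance, the countable product of the underlying i.i.d.\ uniform samples and the independent simplex-inclusion coin flips used in \cref{sec:complexons:sampling}), the Borel--Cantelli lemma then gives $\Pr(\limsup_n E_n) = 0$. Equivalently, with probability $1$, only finitely many of the events $E_n$ occur, so almost surely there exists a (random) $N$ such that for all $n \geq N$,
\begin{equation*}
  \cutd_{\cut}(\faceted{W},\sample{K}(n,W);(\alpha_j)) \leq \frac{8\cdot 2^d + 1}{\sqrt{\log_2 n}}\, C.
\end{equation*}

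Since the right-hand side tends to zero as $n\to\infty$, this yields $\cutd_{\cut}(\faceted{W},\sample{K}(n,W);(\alpha_j)) \to 0$ almost surely, which is exactly the claimed convergence.

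I do not expect any real obstacle here: the main content is already packaged in \cref{lemma:sampling}, and the remaining work is the routine summability check for $\exp(-n/(2\log_2 n))$ plus the standard Borel--Cantelli argument. The only mild point worth flagging is that the statement is about almost sure convergence of a single sequence of random complexes, so one must be explicit that all the $\sample{K}(n,W)$ are being realized on a common probability space (which is natural from the construction in \cref{sec:complexons:sampling}); otherwise the Borel--Cantelli conclusion applies only to the marginal laws.
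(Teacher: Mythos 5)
Your proof is correct and is exactly the route the paper indicates: the paper explicitly states that \cref{coro:sampling-convergence} follows from \cref{lemma:sampling} via the Borel--Cantelli lemma, and your write-up fills in the routine details (summability of $\exp(-n/(2\log_2 n))$, passage to a common probability space, and letting the bound $\tfrac{8\cdot 2^d+1}{\sqrt{\log_2 n}}\sum_j\alpha_j \to 0$). No gap, same approach.
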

\subsection{The Inverse Counting Lemma}

We are now ready to state the inverse of \cref{lemma:counting}.

\begin{lemma}[Inverse Counting Lemma]\label{lemma:inverse-counting}
  Let $U,W$ be two complexons, and suppose that for some $d\geq 1, n\geq 2$, for all simplicial complexes $F$ on $n$ nodes of dimension $d$,
  \begin{equation*}
    \left|\thom(F,U)-\thom(F,W)\right|\leq
    0.999\cdot 2^{-((1+n)^{d+2})}
  \end{equation*}
  Then, for all finite nonnegative sequences $\alpha_1,\ldots,\alpha_d$,
  \begin{equation*}
    \cutd_{\cut}(\faceted{U},\faceted{W};(\alpha_j))\leq
    \frac{2^{d+4}+2}{\sqrt{\log_2{n}}}\sum_{j=1}^d\alpha_j.
  \end{equation*}
\end{lemma}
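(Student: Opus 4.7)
My plan is to emulate the sample-and-couple strategy from graphon theory: mediate the comparison between $\faceted{U}$ and $\faceted{W}$ via random simplicial complexes drawn from each complexon, coupled so that they coincide with high probability. Concretely, draw $K_U\sim\sample{K}(n,U)$ and $K_W\sim\sample{K}(n,W)$. By the Sampling Lemma (\cref{lemma:sampling}), each sample lies within $\epsilon_1:=\frac{(8\cdot 2^d+1)\sum_{j=1}^d\alpha_j}{\sqrt{\log_2{n}}}$ of its source in $\cutd_{\cut}(\cdot,\cdot;(\alpha_j))$ with probability at least $1-\exp(-n/(2\log_2{n}))$; \cref{lemma:finite-to-complexon-cut} lets me freely compare simplicial complexes to complexons in cut-distance.

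The combinatorial heart of the argument is converting the hypothesis on $\thom$ into a bound on $\tind$, via M\"obius inversion in the lattice of simplicial complexes on $[n]$. This lattice is the distributive lattice of down-closed subsets of the poset of nonempty subsets of $[n]$, whose M\"obius function equals $(-1)^{|F'\setminus F|}$ on intervals $[F,F']$ where $F'\setminus F$ is an antichain, and $0$ otherwise. Hence $\tind(F,W)=\sum_{F'\supseteq F}\mu(F,F')\thom(F',W)$ with coefficients in $\{-1,0,1\}$. A crude injection shows the number of simplicial complexes on $[n]$ of dimension at most $d$ is bounded by $N\leq 2^{(1+n)^{d+1}}$, so the hypothesis yields $|\tind(F,U)-\tind(F,W)|\leq N\cdot 0.999\cdot 2^{-(1+n)^{d+2}}$ for every such $F$. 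Since $\tind(F,W)=\Pr\{\sample{K}(n,W)=F\}$ by \cref{lemma:ind-sample}, summing over $F$ bounds the total-variation distance between the laws of $K_U$ and $K_W$ by $\mathrm{TV}\leq \tfrac{1}{2}N^2\cdot 0.999\cdot 2^{-(1+n)^{d+2}}$; for $n\geq 2$ we have $(1+n)^{d+2}-2(1+n)^{d+1}=(1+n)^{d+1}(n-1)\geq 3$, so $\mathrm{TV}$ is bounded well away from $1$.

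The finish is then standard. Couple $K_U$ and $K_W$ so that $K_U=K_W$ with probability $1-\mathrm{TV}$, and union-bound against the two Sampling Lemma events to exhibit a joint realization of strictly positive probability on which all three events hold. On such a realization, the triangle inequality for $\cutd_{\cut}$ together with $\cutd_{\cut}(K_U,K_W;(\alpha_j))=0$ when $K_U=K_W$ (via \cref{lemma:finite-to-complexon-cut}) yields $\cutd_{\cut}(\faceted{U},\faceted{W};(\alpha_j))\leq 2\epsilon_1=\frac{(2^{d+4}+2)\sum_{j=1}^d\alpha_j}{\sqrt{\log_2{n}}}$, as required. The main obstacle is calibrating the M\"obius-inversion step: the hypothesis controls $\thom$, but the coupling picture requires $\tind$, and the conversion inflates the error by $N\leq 2^{(1+n)^{d+1}}$ twice --- once in inverting, once in summing to obtain TV. The hypothesis's $2^{-(1+n)^{d+2}}$ tolerance is engineered precisely to absorb both blow-ups, which works exactly when $(1+n)^{d+2}$ strictly exceeds $2(1+n)^{d+1}$, i.e.\ when $n\geq 2$.
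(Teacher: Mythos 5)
Your proposal is correct and follows essentially the same route as the paper's: convert the $\thom$ hypothesis into a bound on $\tind$, sum to bound the total-variation distance between the laws of the $d$-skeleton samples, then use the Sampling Lemma on both complexons and a coupling/union-bound argument to transfer proximity. The paper packages this in two steps (an intermediate ``pre-inverse counting lemma'' stated in terms of total variation, then the inversion step), but the content is the same. The one genuine presentational difference is the inversion: the paper expands $\tind(F,W_d)=\sum_{G\subseteq\antifaceted{F}:\dim G\leq d}(-1)^{|G|}\thom(F\cup G,W)$ by inclusion--exclusion over subsets of antifacets and then bounds the number of antifacets by $\binom{n}{d+1}$, whereas you invoke M\"obius inversion in the distributive lattice $J(P)$ of simplicial complexes on $[n]$, which hands you the $\{-1,0,1\}$ coefficients directly from the standard antichain characterization of $\mu$ on $J(P)$. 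Your count $N\le 2^{(1+n)^{d+1}}$ is correct (since $\sum_{j\le d+1}\binom{n}{j}\le\sum_{j\le d+1}n^j\le(1+n)^{d+1}$), and the $n\ge 2$ threshold calculation $(1+n)^{d+2}-2(1+n)^{d+1}=(n-1)(1+n)^{d+1}\ge 3$ is right. One small notational nit: you should couple the $d$-skeletons $\sample{K}_d(n,U)$ and $\sample{K}_d(n,W)$ rather than the full samples (this is what your TV computation actually controls), as the Sampling Lemma with a finite sequence $\alpha_1,\ldots,\alpha_d$ only constrains dimensions up to $d$ anyway.
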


We leave the proof to \cref{sec:proofs:inverse-counting}.
The bound in \cref{lemma:inverse-counting} is only useful for $\log_2{n}\gtrsim 2^{d+4}$, with homomorphism densities that are extremely close.
However, it does demonstrate that the cut-distance, a quantity determined by a Lebesgue integral on a continuous domain over all possible Borel sets of certain dimension, can be characterized by a collection of quantities determined by finite objects.
Keeping in mind that this bound is generic, holding for any possible pair of complexons, one would expect it to be much better if considered on a tamer family of complexons, \eg, those that are stepfunctions on some coarse partition of $[0,1]$.
Indeed, we have the following result for stepfunctions.
\begin{proposition}\label{prop:step-inverse-counting}
  Let $U,W$ be two complexons.
  Suppose for some integer $m\geq 1$, $U$ and $W$ are both stepfunctions on respective equipartitions $\partition{P}_U,\partition{P}_W$, each with at most $m$ steps.
  Furthermore, suppose that for some $d\geq 1, n\geq 2$, for all simplicial complexes $F$ on $n$ nodes of dimension $d$,
  \begin{equation*}
    |\thom(F,U)-\thom(F,W)| \leq 0.999\cdot 2^{-((1+n)^{d+2})}.
  \end{equation*}
  Then, for all finite nonnegative sequences $\alpha_1,\ldots,\alpha_d$,
  \begin{equation*}
    \cutd_{\cut}(\faceted{U},\faceted{W};(\alpha_j))\leq
    \frac{\sqrt{m}(4d+5)+4}{\sqrt{n}}\sum_{j=1}^d\alpha_j.
  \end{equation*}
\end{proposition}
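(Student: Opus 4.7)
The plan is to follow the same sampling-based template used to prove \cref{lemma:inverse-counting}: sample from each complexon, apply a concentration bound to show each sample is close to its complexon in cut-distance, and use the hypothesis on homomorphism densities to couple the two samples so that they agree with overwhelming probability. The improvement from $1/\sqrt{\log_2 n}$ to $\sqrt{m/n}$ comes entirely from a strengthened concentration bound exploiting the stepfunction structure.

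The first step, and the main obstacle, is a stepfunction version of \cref{lemma:sampling}, asserting that if $W$ is a stepfunction on an $m$-equipartition $\{P_1,\dots,P_m\}$ then, with probability $1-o(1)$,
\begin{equation*}
  \cutd_{\cut}(\faceted{W},\sample{K}(n,W);(\alpha_j)) \leq \frac{C_d\sqrt{m}}{\sqrt{n}}\sum_{j=1}^d\alpha_j
\end{equation*}
for an explicit constant $C_d = O(d)$. The key point is that both $\faceted{W}$ and the pixel complexon $W_{\sample{K}(n,W)}$ are (after averaging within cells) stepfunctions on the partition $\{P_1,\dots,P_m\}$, so the supremum defining each $\labelcutd_{\ncut{j}}$-norm can be restricted to subsets that are unions of these cells, which only costs a $2^m$ factor in the subsequent union bound. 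Within each cell tuple $(i_1,\dots,i_{j+1})\in[m]^{j+1}$, the count of $j$-simplices of $\sample{K}(n,W)$ whose vertices land in that tuple is a sum of independent Bernoulli variables—using the upper-model representation of \cref{sec:complexons:sampling}—with mean $\faceted{W}(P_{i_1}\times\cdots\times P_{i_{j+1}})\prod_\ell n_{i_\ell}$, so Hoeffding controls its deviation at scale $\sqrt{\prod_\ell n_{i_\ell}}$. Combining this with a Chernoff bound that concentrates the cell occupancies $n_i$ around $n/m$, and a union bound over the $2^m\cdot m^{j+1}$ choices, yields the $\sqrt{m/n}$ rate, and careful bookkeeping produces the explicit constant $4d+5$.

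The second step closely mirrors the proof of \cref{lemma:inverse-counting}. By inclusion-exclusion over the non-simplices of $F$, each $|\tind(F,U) - \tind(F,W)|$ is bounded by $2^N\cdot\max_{F'}|\thom(F',U)-\thom(F',W)|$, where $N=\sum_{k=1}^{d+1}\binom{n}{k}$; summing over the at most $2^N$ labeled simplicial complexes on $[n]$ of dimension $d$ bounds the total-variation distance between the distributions of $\sample{K}(n,U)$ and $\sample{K}(n,W)$ by $2^{2N}\cdot 0.999\cdot 2^{-(1+n)^{d+2}}$, which is vanishingly small because $(1+n)^{d+2}$ dominates $2N$ for $n\geq 2$, $d\geq 1$. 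Hence there exists a coupling under which $\sample{K}(n,U) = \sample{K}(n,W)$ except on an event of negligible probability.

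On the intersection of this coupling event and the two high-probability events furnished by the stepfunction sampling lemma applied to $U$ and to $W$, the triangle inequality yields
\begin{equation*}
  \cutd_{\cut}(\faceted{U},\faceted{W};(\alpha_j)) \leq \cutd_{\cut}(\faceted{U},\sample{K}(n,U);(\alpha_j)) + \cutd_{\cut}(\sample{K}(n,W),\faceted{W};(\alpha_j)) \leq \frac{\sqrt{m}(4d+5)+4}{\sqrt{n}}\sum_j\alpha_j,
\end{equation*}
where the additive $4/\sqrt{n}$ absorbs the vanishing contribution from the complementary event. The technical work really lives in Step~1: the general proof of \cref{lemma:sampling} relies on a weak regularity / averaging step that forces the $1/\sqrt{\log_2 n}$ rate, whereas for stepfunctions the natural cell partition replaces this approximation for free, but the resulting constants must be tracked carefully to match the stated bound.
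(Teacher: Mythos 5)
Your overall plan matches the paper's intent: the paper states that Proposition~\ref{prop:step-inverse-counting} follows by specializing the proofs of \cref{lemma:sampling} and \cref{lemma:inverse-counting}, and you correctly identify that the weak-regularity step of \cref{lemma:sampling} (which produces the $1/\sqrt{\log_2 n}$ rate) vanishes when $W$ is already a stepfunction, after which the argument of \cref{lemma:pre-inverse-counting} goes through with the improved rate. Your Step~2 is fine. The problem lies in the mechanism you propose for Step~1.

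You claim that since $\faceted{W}$ and an averaged version of $W_{\sample{K}(n,W)}$ are both stepfunctions on $\partition{P}_W$, the supremum in $\labelcutd_{\ncut{j}}$ can be restricted to unions of the $m$ cells, so that a $2^m$-sized union bound suffices. This is not an upper bound on the cut-norm. The pixel complexon $W_{\sample{K}(n,W)}$ is a stepfunction on the pixel partition (roughly $n$ classes), not on $\partition{P}_W$, so $\faceted{W} - W_{\sample{K}(n,W)}$ is not a $\partition{P}_W$-stepfunction, and the restriction-to-unions-of-cells argument (which only applies to stepfunctions on the common partition) does not hold. The missing quantity is the within-cell part $\labelcutd_{\ncut{j}}\bigl(W_{\sample{K}(n,W)},\,(W_{\sample{K}(n,W)})_{\partition{P}_W}\bigr)$, i.e.\ the discrepancy of the random sample inside cells. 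That term is of order $1/\sqrt{n}$ — comparable to, not dominated by, the intercell deviations — so it cannot be discarded. To see this concretely take $m=1$, $d=1$, $W\equiv p$: restricting to unions of cells reduces the $1$-dimensional cut-norm to $|\text{edge count}/n^2 - p|=O(1/n)$, whereas the true cut-norm is the discrepancy of $G(n,p)$, which is $\Theta(1/\sqrt{n})$. In the paper's framework this within-cell term is controlled by \cref{lemma:exp-weighted-sample}, whose union bound is over the $(d+2)^n$ partitions of the vertex set (via \cref{lemma:disjoint-cut}), not over $2^m$ cell unions, and still yields the $n^{-1/2}$ rate.

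The correct route for Step~1, following the proof of \cref{lemma:sampling}, is: (i) take $\partition{P}=\partition{P}_W$ so \eqref{eq:partition-bound} is zero; (ii) \eqref{eq:first-sampling-bound} is also zero since $\faceted{W}_{\partition{P}}=\faceted{W}$; (iii) \eqref{eq:simple-bound}, which controls the mismatch of the cell occupancies $n_i$ with $n/m$, is where $\sqrt{m/n}$ actually arises, using $\Ex\bigl[\sum_i|r_i|\bigr]\leq\sqrt{m/n}$; and (iv) \eqref{eq:random-complex-bound} gives the within-cell fluctuation, which by \cref{lemma:exp-weighted-sample} with $p=1/2$ is $O\bigl(\sqrt{d+1}/\sqrt{n}\bigr)$. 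Your presentation instead attributes the $\sqrt{m/n}$ rate to a Hoeffding bound on per-cell simplex counts and tries to skip the $(d+2)^n$ union bound entirely; the former is secondary and the latter is necessary. Rewriting Step~1 around \eqref{eq:partition-bound}--\eqref{eq:random-complex-bound} and invoking \cref{lemma:exp-weighted-sample} closes the gap and recovers the advertised constant.
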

We omit the proof, as the argument is a mere simplification of the proofs of \cref{lemma:sampling,lemma:inverse-counting}.
By making the stronger assumption that the complexons of interest are stepfunctions on coarse equipartitions of $[0,1]$, \cref{prop:step-inverse-counting} presents an inverse counting lemma that holds for $n\gtrsim md^2$.

\subsection{Topology of the Space of Complexons}
\label{sec:space:compact}

Denote the set of all complexons by $\kerns$, and let $(\alpha_j)_{j\geq 1}$ be a nonnegative, summable sequence of real numbers.
If two complexons $W_1,W_2\in\kerns$ only differ on a set of measure zero, then $\labelcutd_{\cut}(W_1,W_2;(\alpha_j)_{j\geq 1})=0$.
Indeed, the labeled cut-distance is only a pseudometric on $\kerns$.
Furthermore, the unlabeled cut-distance $\delta_{\cut}$ can take value zero if there exists a measure-preserving bijection $\phi:[0,1]\to[0,1]$ such that $W_1$ and $W_2^\phi$ only differ on a set of measure zero, and is thus also a pseudometric.
The following result states that, topologically speaking, the choice of $(\alpha_j)$ in this construction does not matter too much.

\begin{lemma}\label{lemma:topological-equivalence}
  Let $(\alpha_j)_{j\geq 1}$ and $(\beta_j)_{j\geq 1}$ be strictly positive, summable sequences of real numbers.
  Then, the cut-metrics $\delta_{\cut}(\cdot,\cdot;(\alpha_j)_{j\geq 1})$ and $\delta_{\cut}(\cdot,\cdot;(\beta_j)_{j\geq 1})$ are topologically equivalent pseudometrics on $\kerns$.
\end{lemma}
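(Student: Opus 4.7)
The plan is to establish that the two pseudometrics induce the same convergent sequences: since $\kerns$ equipped with either $\cutd_{\cut}(\cdot,\cdot;(\alpha_j))$ or $\cutd_{\cut}(\cdot,\cdot;(\beta_j))$ is first-countable (any pseudometric topology is), topological equivalence is equivalent to having the same convergent sequences with the same limits, so it suffices to show convergence in one implies convergence in the other, and then invoke symmetry.

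The mechanism is a truncation argument. Fix $\epsilon>0$. Using summability of $(\beta_j)$, choose $N$ so large that $\sum_{j>N}\beta_j<\epsilon/2$. Since each $W\in\kerns$ takes values in $[0,1]$, the individual cut-distances $\labelcutd_{\ncut{j}}(W_1,W_2^\phi)$ are trivially bounded by $1$, independent of $\phi$. Consequently the tail contribution $\sum_{j>N}\beta_j\labelcutd_{\ncut{j}}(W_1,W_2^\phi)$ is at most $\epsilon/2$ uniformly in $\phi$. For the head, let $C_N=\max_{1\le j\le N}\beta_j/\alpha_j$, which is finite because only finitely many terms appear and each $\alpha_j>0$ by hypothesis. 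Then term-by-term, $\beta_j\labelcutd_{\ncut{j}}\le C_N\alpha_j\labelcutd_{\ncut{j}}$ for $j\le N$, so the head is dominated by $C_N\labelcutd_{\cut}(W_1,W_2^\phi;(\alpha_j))$.

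Putting the two bounds together gives, for every measure-preserving $\phi\colon[0,1]\to[0,1]$, the inequality
\begin{equation*}
  \labelcutd_{\cut}(W_1,W_2^\phi;(\beta_j))
  \le C_N\labelcutd_{\cut}(W_1,W_2^\phi;(\alpha_j))+\tfrac{\epsilon}{2}.
\end{equation*}
Taking the infimum over $\phi$ on both sides yields
\begin{equation*}
  \cutd_{\cut}(W_1,W_2;(\beta_j))
  \le C_N\,\cutd_{\cut}(W_1,W_2;(\alpha_j))+\tfrac{\epsilon}{2}.
\end{equation*}
Consequently, if $(W_n)_{n\ge1}$ is a sequence in $\kerns$ with $\cutd_{\cut}(W_n,W;(\alpha_j))\to 0$, then $\limsup_n\cutd_{\cut}(W_n,W;(\beta_j))\le \epsilon/2$, and since $\epsilon$ was arbitrary, $\cutd_{\cut}(W_n,W;(\beta_j))\to 0$ as well.

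By swapping the roles of $(\alpha_j)$ and $(\beta_j)$, the reverse implication holds by the identical argument, completing the proof. The same truncation proof also works verbatim for the labeled pseudometric $\labelcutd_{\cut}$, but this is not needed for the claim. I do not anticipate a main obstacle; the only subtlety is that the ratio $\beta_j/\alpha_j$ need not be bounded uniformly in $j$, which is why the argument must split the weighted sum into finitely many dominant terms plus a tail that is small by summability, rather than attempting a single global Lipschitz comparison.
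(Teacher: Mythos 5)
Your proof is correct and uses essentially the same truncation argument as the paper: split the weighted sum into a finite head (comparable via a finite ratio constant) and a tail made small by summability, giving the one-sided comparison that yields topological equivalence by symmetry. The only cosmetic difference is that you phrase the conclusion via convergent sequences and first-countability, whereas the paper directly exhibits a containment of open balls $B_{\epsilon'}(U;(\beta_j))\subseteq B_\epsilon(U;(\alpha_j))$; your inequality $\cutd_{\cut}(W_1,W_2;(\beta_j))\le C_N\,\cutd_{\cut}(W_1,W_2;(\alpha_j))+\epsilon/2$ gives that containment just as readily.
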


With \cref{lemma:topological-equivalence,lemma:counting,lemma:inverse-counting} in mind, we endow $\kerns$ with a topology determined by any strictly positive, summable sequence $(\alpha_j)_{j\geq 1}$.
In particular, we define the canonical topology on $\kerns$ as the topology determined by the pseudometric
\begin{equation*}
  \weakcutd(U,W;(\alpha_j)_{j\geq 1}) = \cutd_{\cut}(\faceted{U},\faceted{W};(\alpha_j)_{j\geq 1}),
\end{equation*}
for an arbitrary strictly positive, summable sequence $(\alpha_j)_{j\geq 1}$.
We also find it useful to define the $\cutd$-topology on $\kerns$ as the topology determined by the pseudometric $\cutd_{\cut}$, again for some strictly positive, summable sequence.
The following result about the canonical topology on $\kerns$ is immediate.
\begin{proposition}\label{prop:simple-dense}
  Let $\fkerns\subset\kerns$ be the set of complexons that arise from simplicial complexes, \ie{}, for every $W\in\fkerns$, there is some simplicial complex $K$ such that $W=W_K$.
  Then, $\fkerns$ is dense in $\kerns$ in the canonical topology.
\end{proposition}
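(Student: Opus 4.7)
The plan is to exhibit, for every complexon $W$, positive summable sequence $(\alpha_j)_{j\geq 1}$, and $\epsilon > 0$, a simplicial complex $K$ with $\weakcutd(W_K, W; (\alpha_j)) < \epsilon$. The natural candidate is a realization of the random sample $\sample{K}(n, W)$ for sufficiently large $n$, so the central input will be \cref{coro:sampling-convergence}.

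The first preparatory observation is that $\faceted{W_K} = W_K$ for any simplicial complex $K$. Indeed, $W_K$ is $\{0,1\}$-valued, and closure of $K$ under restriction forces $W_K(x_\sigma) = 1$ for every $\sigma$ whenever $W_K(x) = 1$, so $\prod_\sigma W_K(x_\sigma) = W_K(x)$ in both cases. Therefore $\weakcutd(W_K, W; (\alpha_j)) = \cutd_\cut(W_K, \faceted{W}; (\alpha_j))$, which is exactly the quantity controlled by the sampling corollary, modulo the technicality addressed next.

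That technicality is that \cref{coro:sampling-convergence} is stated for a finite weight sequence $\alpha_1, \ldots, \alpha_d$, whereas the canonical topology uses a full summable sequence. I would bridge this by a direct truncation: choose $d$ so that $2 \sum_{j > d} \alpha_j < \epsilon/2$, which is possible by summability. Since both $W_K$ and $\faceted{W}$ are $[0,1]$-valued, each $\labelcutd_{\ncut{j}}(W_K, \faceted{W}^\phi) \leq 2$. Picking a measure-preserving $\phi^*$ that nearly minimizes the truncated objective $\sum_{j=1}^d \alpha_j \labelcutd_{\ncut{j}}(W_K, \faceted{W}^\phi)$ and evaluating the full sum at that same $\phi^*$ yields
\[
\cutd_\cut(W_K, \faceted{W}; (\alpha_j)) \leq \cutd_\cut(W_K, \faceted{W}; \alpha_1, \ldots, \alpha_d) + 2 \sum_{j > d} \alpha_j.
\]

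Finally, applying \cref{coro:sampling-convergence} with the truncated weights, almost surely $\cutd_\cut(W_{\sample{K}(n,W)}, \faceted{W}; \alpha_1, \ldots, \alpha_d) \to 0$; fix any $n$ and realization $K_n$ for which this truncated distance is below $\epsilon/2$. The displayed bound then gives $\weakcutd(W_{K_n}, W; (\alpha_j)) < \epsilon$, proving density. There is no real obstacle here: the statement is an almost immediate consequence of the sampling corollary, with the only nontrivial ingredient being the routine truncation that converts the finite weight sequence of the corollary into the summable one of the canonical topology.
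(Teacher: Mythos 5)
Your proof is correct and follows essentially the same route as the paper's: invoke \cref{coro:sampling-convergence} to obtain a realization $K_n$ of $\sample{K}(n,W)$ close to $\faceted{W}$ in a truncated cut-distance, observe that $\faceted{W_{K_n}}=W_{K_n}$ so this truncated distance equals $\weakcutd$ up to the tail, and absorb the tail $\sum_{j>d}\alpha_j$ via summability. The only substantive difference is stylistic---you make the $\faceted{W_K}=W_K$ fact and the truncation estimate explicit where the paper states them more tersely---and your bound $\labelcutd_{\ncut{j}}\leq 2$ could be sharpened to $\leq 1$ since both complexons are $[0,1]$-valued, though this costs nothing.
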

Our primary result of this section establishes the compactness of $\kerns$ with respect to this topology.
\begin{theorem}[Compactness]\label{thm:compactness}
  The space $\kerns$ with the canonical topology is compact.
\end{theorem}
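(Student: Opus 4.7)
The plan is to model the proof on the classical compactness theorem for graphons in \citet{Lovasz2006}, stitching together the Counting Lemma, Sampling Lemma, and Inverse Counting Lemma already established.

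First, I would reduce to convergence of all homomorphism densities. Enumerate the countably many isomorphism classes of finite simplicial complexes as $F_1,F_2,\ldots$; since each $\thom(F_j,W_n)\in[0,1]$, a diagonal extraction inside the Tychonoff-compact cube $[0,1]^{\mathbb{N}}$ produces a subsequence, still denoted $(W_n)$, along which $\thom(F_j,W_n)\to t_j$ for every $j$.

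Second, I would argue that this subsequence is Cauchy in $\weakcutd$ using \cref{lemma:inverse-counting}. Given $\varepsilon>0$, summability of $(\alpha_j)$ lets me pick $D$ with $\sum_{j>D}\alpha_j<\varepsilon/2$; since $\labelcutd_{\ncut{j}}\leq 1$ trivially, this bounds the contribution from dimensions above $D$. For the first $D$ dimensions, choose $n$ so large that $(2^{D+4}+2)\sum_{j=1}^{D}\alpha_j/\sqrt{\log_2 n}<\varepsilon/2$. Only finitely many simplicial complexes $F$ have $n$ nodes and dimension at most $D$, so for $k,k'$ far enough along, the convergence of the $\thom(F,W_{n_k})$ forces $|\thom(F,W_{n_k})-\thom(F,W_{n_{k'}})|<0.999\cdot 2^{-((1+n)^{D+2})}$ for every such $F$; applying \cref{lemma:inverse-counting} dimensionwise up to $D$ then supplies $\weakcutd(W_{n_k},W_{n_{k'}})<\varepsilon$.

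Third---and this is the principal obstacle---I would exhibit an actual complexon $W^{\ast}$ serving as the cut-limit of this Cauchy subsequence. Here I would invoke \cref{lemma:sampling} to reduce the problem to the construction of a limit of stepfunctions: draw $S_k\sim\sample{K}(k,W_{n_k})$, so by the Sampling Lemma together with Borel--Cantelli, almost surely $\weakcutd(W_{n_k},W_{S_k})\to 0$. Couple the nested samples $S_1,S_2,\ldots$ as successive induced subcomplexes of a single infinite exchangeable random simplicial complex on vertex set $\mathbb{N}$, whose finite-dimensional marginals are consistent (since subsampling a sample is itself a sample) and are determined by the limits $t_F$; an Aldous--Hoover style representation theorem for totally symmetric exchangeable $\{0,1\}$-arrays then furnishes a measurable, totally symmetric function $W^{\ast}:\djunion_{d\geq 1}[0,1]^{d+1}\to[0,1]$ that reproduces this sampling distribution. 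The Counting Lemma certifies $\thom(F,W^{\ast})=t_F$ for every $F$, and combining with the Cauchy estimate of the previous step upgrades this to $W_{n_k}\to W^{\ast}$ in $\weakcutd$.

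The hard part is this Aldous--Hoover step: lifting a consistent family of finite sampling distributions to a bona fide complexon. This is where the proof genuinely uses machinery beyond the lemmas already in hand, and where care is needed to verify measurability and total symmetry \emph{simultaneously} across every $[0,1]^{d+1}$, since a complexon carries correlated data in all dimensions $d\geq 1$ at once rather than one symmetric array at a time.
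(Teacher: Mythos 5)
Your first two steps---diagonal extraction of a subsequence along which every $\thom(F_j,W_n)$ converges, then Cauchyness in $\weakcutd$ via the Inverse Counting Lemma---are correct and in fact mirror the ``only if'' half of the paper's \cref{thm:hom-cut-eq}. The paper's proof is routed differently: it first proves sequential compactness of $(\kerns,\cutd_\cut)$ directly, using the weak regularity lemma (\cref{thm:partition}) to build nested equipartitions $\partition{P}_{n,k}$ with convergent stepfunction values, then a martingale convergence argument to assemble a limit complexon $U$ from the stepfunction limits $U_k$; it then establishes continuity of the faceting map (\cref{lemma:facet-continuous}) and expresses the canonical topology as the quotient topology. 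The limit object is constructed explicitly, never passing through exchangeable arrays.

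The genuine gap in your step~3 is the Aldous--Hoover invocation. The Aldous--Hoover representation of a jointly exchangeable $\{0,1\}$-array indexed by $(d+1)$-subsets of $\mathbb{N}$ is \emph{not} of the form $\indic\{\xi_\sigma\leq W(x_{i_1},\ldots,x_{i_{d+1}})\}$ with only vertex-level uniforms $x_i$: it necessarily involves auxiliary i.i.d.\ uniforms $\xi_A$ attached to \emph{every} nonempty subset $A\subseteq\sigma$, and for $d\geq 2$ these intermediate face variables cannot in general be integrated out. (This is precisely why limit objects of dense $k$-uniform hypergraphs for $k\geq 3$, as in Elek--Szegedy and Zhao, are functions of $2^{k}-2$ coordinates rather than $k$.) So Aldous--Hoover gives you a consistent exchangeable random simplicial complex on $\mathbb{N}$, but it does \emph{not} hand you a complexon $W^\ast$; eliminating the face variables is exactly the content of the theorem you are trying to prove, not a technicality of measurability or symmetry as your last paragraph suggests. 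Your ``reduction'' via \cref{lemma:sampling} to stepfunctions $S_k$ also does not help: the $S_k$ approximate the $W_{n_k}$ but you still have to produce a limit for them, which is the same problem again. What is missing is a construction that stays inside $\kerns$ at every stage---the paper's martingale-on-stepfunctions argument is precisely such a device, and its role cannot be offloaded to a citation of Aldous--Hoover.
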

We leave the proofs of \cref{lemma:topological-equivalence,prop:simple-dense,thm:compactness} to \cref{sec:proofs:compactness}.
So far, we have established that the space of complexons with the canonical topology is ``essentially finite,'' in that it can be approximated by finitely many (\cref{thm:compactness}) finite simplicial complexes (\cref{prop:simple-dense}) in the cut-metric.

The canonical topology on $\kerns$ can also be defined based on homomorphism densities.
\begin{theorem}[Equivalence of Cuts and Homomorphisms]\label{thm:hom-cut-eq}
  Let $(a_j)_{j\geq 1}$ be a strictly positive, summable sequence of real numbers.
  Noting that there are countably many simplicial complexes (up to isomorphism), let $(F_j)_{j\geq 1}$ be an enumeration of the set of all isomorphism classes of simplicial complexes, making the identification $\Vert(F_j)=[\vert(F_j)]$ for each $j\geq 1$.
  Define a pseudometric $\rho$ on $\kerns$ so that for any $U,W\in\kerns$,
  \begin{equation*}
    \rho(U,W;(a_j)) = \sum_{j=1}^\infty a_j\cdot|\thom(F_j,U)-\thom(F_j,W)|.
  \end{equation*}
  Then, the topology on $\kerns$ induced by $\rho$ is equal to the canonical topology.
\end{theorem}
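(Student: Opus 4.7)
My approach is to show that the identity map on $\kerns$ is a homeomorphism from the canonical topology to the $\rho$-topology, by combining the Counting and Inverse Counting Lemmas with the compactness theorem.

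\textbf{Step 1 (Easy direction).} Fix any strictly positive, summable sequence $(\alpha_j)_{j\ge 1}$ defining $\weakcutd$. For each $F_j$, the Counting Lemma (\cref{lemma:counting}) gives $|\thom(F_j,U)-\thom(F_j,W)| \le \cutd_\cut(\faceted{U},\faceted{W};(|(\faceted{F_j})^{(k)}|)_{k\ge 1})$. Because the weight sequence on the right is nonnegative and finitely supported, it is pointwise dominated by $C_{F_j}\cdot(\alpha_k)$ for some constant $C_{F_j}$, so $|\thom(F_j,U)-\thom(F_j,W)| \le C_{F_j}\cdot\weakcutd(U,W)$. Thus $W\mapsto \thom(F_j,W)$ is continuous in the canonical topology. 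Since each summand of $\rho$ is bounded by $a_j$ and $(a_j)$ is summable, dominated convergence then shows that $\rho$ itself is continuous in the canonical topology.

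\textbf{Step 2 (The null sets coincide).} Step 1 immediately yields $\weakcutd(U,W)=0\Rightarrow\rho(U,W)=0$. For the converse, suppose $\rho(U,W)=0$. Since $a_j>0$ and $(F_j)$ enumerates all isomorphism classes, $\thom(F,U)=\thom(F,W)$ for every simplicial complex $F$. The hypothesis of the Inverse Counting Lemma (\cref{lemma:inverse-counting}) therefore holds vacuously for every $n\ge 2$ and $d\ge 1$, yielding
\begin{equation*}
\cutd_\cut(\faceted{U},\faceted{W};(\alpha_j)_{j=1}^d)\le \frac{2^{d+4}+2}{\sqrt{\log_2 n}}\sum_{j=1}^d\alpha_j.
\end{equation*}
Letting $n\to\infty$ kills the right-hand side for each fixed $d$. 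Using the trivial bound $\cutd_{\ncut{j}}\le 1$ on the tail contributions then gives $\weakcutd(U,W)\le \sum_{j>d}\alpha_j$, which vanishes as $d\to\infty$.

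\textbf{Step 3 (Compactness argument).} Let $X$ be the quotient of $\kerns$ by the equivalence relation $U\sim W\iff \weakcutd(U,W)=0$, which by Step 2 is also $\rho(U,W)=0$. Both $\weakcutd$ and $\rho$ descend to genuine metrics on $X$, so $(X,\rho)$ is Hausdorff. By \cref{thm:compactness}, $(X,\weakcutd)$ is compact. By Step 1, the identity map $(X,\weakcutd)\to(X,\rho)$ is a continuous bijection; a continuous bijection from a compact space to a Hausdorff space is automatically a homeomorphism. Hence the two topologies on $X$ coincide, so the canonical and $\rho$-topologies agree on $\kerns$.

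The main obstacle is Step 2: the Inverse Counting Lemma has both a $1/\sqrt{\log_2 n}$ error rate and a doubly-exponentially-small threshold on the required closeness of homomorphism densities. Invoking it only under exact equality of densities neutralizes both difficulties at once, but one still has to separately handle the tail of the weight sequence in the dimension $d$ to reach $\weakcutd = 0$.
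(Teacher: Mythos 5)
Your proof is correct, but it takes a genuinely different route from the paper's. The paper's own proof works entirely at the level of sequential convergence: for the ``if'' direction it applies the Counting Lemma to the first $M$ hom-densities and bounds the tail by $\sum_{j>M} a_j$; for the ``only if'' direction it picks $d$, then $n$, then $M$ so that approximate closeness of finitely many hom-densities triggers the Inverse Counting Lemma with a small positive threshold $0.999\cdot 2^{-(1+n)^{d+2}}$. You instead (a) show $\rho$ is continuous in the canonical topology via the Counting Lemma and dominated convergence, (b) show the two pseudometrics have the same null set by invoking the Inverse Counting Lemma only under \emph{exact} equality of hom-densities and sending $n\to\infty$ then $d\to\infty$, and (c) upgrade the resulting continuous bijection of metric quotients to a homeomorphism using compactness of $(\kerns,\weakcutd)$ from \cref{thm:compactness}. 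Your Step~2 trick is the nicest part: by testing the Inverse Counting Lemma only at exact equality, you neutralize both the doubly-exponential threshold and the $1/\sqrt{\log_2 n}$ decay rate simultaneously, so you never have to match them carefully against the sequence $(a_j)$ or an enumeration $(F_j)$ the way the paper does. The trade-off is that your argument is not self-contained in the same way: it leans on \cref{thm:compactness} (which in turn rests on \cref{lemma:cut-compactness,lemma:facet-continuous}), whereas the paper's proof requires only the two counting lemmas and delivers explicit quantitative bounds rather than bare topological equivalence. There is no circularity, however, since the proof of \cref{thm:compactness} does not invoke \cref{thm:hom-cut-eq}, so your argument is valid as written.
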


\begin{proof}
  We prove that a sequence of complexons $W_1,W_2,\ldots$ is convergent in the canonical topology if and only if it is convergent with respect to the pseudometric $\rho$.

  \medskip\noindent\textit{(If).}
  Let $\epsilon>0$ be given, and suppose $W_m\to W$ in the canonical topology.
  Let $M$ be such that $\sum_{j>M}a_j<\epsilon/2$.
  Then, by \cref{lemma:counting}, there exists some $m_0$ such that for all $j\leq M, m>m_0$, it holds that
  \begin{equation*}
    \left|\thom(F_j,W_m)-\thom(F_j,W)\right|<\frac{\epsilon}{2M\max_{k\leq M}a_k}.
  \end{equation*}
  It follows, then, that for all $m>m_0$, $\rho(W_m,W;(a_j)_{j\geq 1})<\epsilon$.
  Since $\epsilon$ was given arbitrarily, this implies that $W_m\to W$ in the topology induced by $\rho$, as desired.

  \medskip\noindent\textit{(Only if).}
  Let $\epsilon>0$ be given, and let $(\alpha_j)_{j\geq 1}$ be an arbitrary strictly positive, summable sequence.
  Suppose $W_m\to W$ in the topology induced by $\rho$.
  Let $d$ be such that $\sum_{j>d}\alpha_j<\epsilon/2$.
  Put $n$ such that
  \begin{equation*}
    \frac{2^{d+4}+2}{\sqrt{\log_2{n}}}\sum_{j=1}^{d}\alpha_j < \frac{\epsilon}{2}.
  \end{equation*}
  Noting that there are only finitely many isomorphism classes of simplicial complexes on $n$ nodes of dimension $d$, let $M$ be such that for every simplicial complex $F$ on $n$ nodes of dimension $d$, there is some $j\leq M$ such that $F\cong F_j$.
  By assumption, there is some $m_0$ such that for all $j\leq M, m>m_0$
  \begin{equation*}
    |\thom(F_j,W_m)-\thom(F_j,W)| < 0.999\cdot 2^{-((1+n)^{d+2})}.
  \end{equation*}
  By \cref{lemma:inverse-counting}, this implies that
  \begin{equation*}
    \cutd_{\cut}(\faceted{W}_m,\faceted{W};(\alpha_j)_{j\geq 1})
    \leq\frac{2^{d+4}+2}{\sqrt{\log_2{n}}}\sum_{j=1}^{d}\alpha_j + \sum_{j>d}\alpha_j
    <\epsilon.
  \end{equation*}
  Since $\epsilon$ was given arbitrarily, this implies that $W_m\to W$ in the canonical topology, as desired.
\end{proof}

Observe that the pseudometric $\rho$ in \cref{thm:hom-cut-eq} is defined for an arbitrary strictly positive, summable sequence $(a_j)_{j\geq 1}$ and enumeration $(F_j)_{j\geq 1}$: this indicates that, similar to the $\cutd$-topology as described by \cref{lemma:topological-equivalence}, the topology induced by $\rho$ is not dependent on the particular sequence $(a_j)_{j\geq 1}$ and enumeration $(F_j)_{j\geq 1}$.
In other words, there is a ``hom-topology'' on $\kerns$ that is pseudometrizable by any such $\rho$.
Moreover, one can see that a sequence $(W_m)_{m\geq 1}$ converges in the hom-topology if and only if for all simplicial complexes $F$, $\thom(F,W_m)$ is a convergent sequence.
\Cref{thm:hom-cut-eq}, then, indicates that the canonical topology on $\kerns$ is the topology defined by convergence in homomorphism densities.

Indeed, by the definition of the canonical topology on $\kerns$, two complexons $U,W$ are identified with each other if and only if they are indistinguishable \emph{by sampling}, which is a property that holds modulo sets of measure zero across the faceted complexons, rather than the original complexons themselves.
The identification of complexons by the canonical topology not only ignores differences on sets of measure zero, which have no consequence in sampling and homomorphism densities, but also enforces a ``consistency structure across different dimensions'' as suggested by~\citet{Bobrowski2022}, by considering the faceted version.

\section{Discussion and Remarks}
\label{sec:remarks}

By \cref{thm:hom-cut-eq}, homomorphism densities are sufficient to characterize complexons in the cut-metric.
This yields a characterization of limit objects for sequences of simplicial complexes.
Indeed, recall that sequence $(K_n)$ for $n\geq 1$ of simplicial complexes is said to be convergent if for all simplicial complexes $F$, the sequence $(\thom(F,K_n))_{n\geq 1}$ is convergent.
Equivalently, by \cref{lemma:finite-to-complexon-cut}, the sequence $(K_n)$ is convergent if for all simplicial complexes $F$, the sequence $(\thom(F,W_{K_n}))$ is convergent.
Applying \cref{thm:hom-cut-eq}, this is equivalent to convergence of the sequence $(W_{K_n})$ in the canonical topology.
Since $\kerns$ is compact in the canonical topology (\cf{}~\cref{thm:compactness}), the limit of the sequence $(W_{K_n})$ is itself an element $W\in\kerns$.
It is in this sense that we say $W$ (or any equivalent complexon in the canonical topology) is ``the limit of'' the sequence of simplicial complexes $(K_n)$ as $n\to\infty$.

In the remainder of the paper, we discuss how complexons relate to some other notions in the literature.

\subsection{Limits of partially ordered sets}

Limits of partially ordered sets (posets) in homomorphism density were considered by \citet{Janson2011}, where it was shown that any sequence of partially ordered sets whose homomorphism densities converge have a limit representable by a kernel on an ordered probability space.
It was conjectured that all such kernels can be taken to be defined on the ordered probability space $[0,1]$ with the Lebesgue measure, which was affirmed by \citet{Hladky2015}.
In the context of this work, one may wonder if limits of sequences of simplicial complexes could be understood as posets.
Indeed, a simplicial complex $K$ carries with it a strict partial order determined by set inclusion, yielding the partially ordered set $(K,\subset)$.
Moreover, since the simplices have a notion of dimension, there is a rank function associated with $(K,\subset)$, yielding a graded poset $(K,\subset,\dim)$.

A simplicial homomorphism from a simplicial complex $F$ into $K$ is a poset homomorphism as considered by \citet{Janson2011}, but has some additional structure.
Namely, a simplicial homomorphism $\phi:\Vert(F)\to\Vert(K)$ is rank-preserving as a poset homomorphism, in that $\dim(\sigma)=\dim(\phi(\sigma))$ for all $\sigma\in F$.
General poset homomorphisms as considered by \citet{Janson2011} do not require the rank to be preserved.
It is interesting to see how the rank-preservation condition on these poset homomorphisms yields a significantly different limit structure, namely a complexon, as opposed to a kernel on an ordered probability space.
Indeed, finite graded posets may be treated as abstract cell complexes, of which abstract simplicial complexes are a special type -- this suggests that the symmetry conditions of complexons may be relaxed in order to yield limit objects for more general combinatorial structures.

\subsection{Local Consistent Random Simplicial Complexes}

We have studied the limits of dense sequences of simplicial complexes via complexons, but this is not the only obvious limiting structure.
Another such object for the graph case is a locally consistent random graph model~\citep[Chapter~11.2]{Lovasz2012}.
Analogous structures exist for simplicial complexes.
A random complex model is a sequence $(\mu_n)_{n\geq 1}$ of probability measures such that, for all $n\geq 1$, $\mu_n$ is a probability measure on the set of simplicial complexes $K$ such that $\Vert(K)=[n]$, and if $K$ and $K'$ are isomorphic, then $\mu_n(K)=\mu_n(K')$.

The random complex model $(\mu_n)$ is said to be consistent if for any simplicial complex $K$ on node $[n-1]$, we have
\begin{equation*}
  \mu_{n-1}(K) = \mu_n\left(\left\{F:\Vert(F)=[n],F'=K\right\}\right),
\end{equation*}
where $F'=K$ indicates that the removal of the node $n$ from $F$ (and all simplices containing $n$) yields the simplicial complex $F$.
Furthermore, if for all $n>1$, and disjoint subsets $S,T\subseteq[n]$, the random simplicial complexes on nodes $[|S|]$ and $[|T|]$ determined by taking the induced subcomplex on $S,T$ of a simplicial complex sampled following $\mu_n$ are independent, we say that $(\mu_n)$ is local.

Indeed, any simplicial complex $K$ yields a random complex model.
Define, for $n\geq 1$, the probability measure $\mu_{K,n}$ such that for any simplicial complex $F$ with $\Vert(F)=[n]$
\begin{equation*}
  \mu_{K,n}(F) = \tind(F,K).
\end{equation*}

The following result, following \citep[Theorem~11.7]{Lovasz2012}, links convergent sequences of simplicial complexes to local consistent random complex models.
\begin{theorem}
  Let a convergent sequence of simplicial complexes $K_1,K_2,\ldots$ with $\vert(K_m)\to\infty$ as $m\to\infty$ be given.
  Define, for all $n\geq 1$ and all simplicial complexes $F$ with $\Vert(F)=[n]$, the probability measure
  \begin{equation*}
    \mu_n(F) = \lim_{m\to\infty}\mu_{K_m,n}(F),
  \end{equation*}
  noting that the sequence $K_1,K_2,\ldots$ being convergent implies the limit exists.
  Then, the sequence $(\mu_n)_{n\geq 1}$ forms a local consistent random complex model.
  Conversely, every local consistent random complex model arises this way.
\end{theorem}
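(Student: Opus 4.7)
The plan is to prove both directions of the correspondence. For the forward direction (a convergent sequence yields a local consistent model), I first establish that the limit $\mu_n(F) = \lim_m \tind(F, K_m)$ exists from the given convergence of homomorphism densities. In the dense regime $\vert(K_m)\to\infty$ one has $|\thom(F,K_m) - \tinj(F,K_m)| = O(1/\vert(K_m))$, since non-injective maps contribute a vanishing fraction; hence convergence of $\thom(F,\cdot)$ for every $F$ yields convergence of $\tinj(F,\cdot)$. Fixing the vertex set $[n]$, the identity $\tinj(F,K) = \sum_{F'\supseteq F}\tind(F',K)$, with $F'$ ranging over simplicial complexes on $[n]$ extending $F$, is a finite linear relation that can be M\"obius-inverted on the (finite) poset of simplicial complexes with vertex set $[n]$; thus convergence of $\tinj$ for every $F'$ yields convergence of $\tind$ for every $F$.

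I then verify the three defining properties of $(\mu_n)$. Isomorphism-invariance is immediate from the definition of $\tind$. For consistency, interpret $\tind(F,K_m)$ as the probability that a uniform random injection $\phi:[n]\to\Vert(K_m)$ pulls back the induced subcomplex on $\phi([n])$ to $F$; the restriction $\phi|_{[n-1]}$ is itself a uniform random injection, and its induced subcomplex is the restriction of the original one, so summing $\tind(F,K_m)$ over all $F$ on $[n]$ whose restriction to $[n-1]$ equals $G$ gives $\tind(G,K_m)$, and passing to the limit yields consistency. For locality, given disjoint $S,T\subseteq[n]$, the restrictions $\phi|_S$ and $\phi|_T$ are marginally uniform and independent up to the constraint $\phi(S)\cap\phi(T)=\emptyset$, which holds with probability $1-O(1/\vert(K_m))$; the joint probability that the induced subcomplexes on $\phi(S),\phi(T)$ equal $F_S,F_T$ therefore differs from $\tind(F_S,K_m)\cdot\tind(F_T,K_m)$ by $O(1/\vert(K_m))\to 0$, producing exact independence under $\mu_n$.

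For the converse, given a local consistent $(\mu_n)$, I apply Kolmogorov's extension theorem: the consistency and isomorphism-invariance of $(\mu_n)$ supply the projective-system condition, producing a probability measure $\mu_\infty$ on simplicial complexes with vertex set $\mathbb{N}$ whose restriction to $[n]$ is $\mu_n$. Draw $K_\infty\sim\mu_\infty$ and let $K_m$ be its induced subcomplex on $[m]$. Consistency gives $\Ex[\tind(F,K_m)] = \mu_k(F)$ for any $F$ on $[k]$. To upgrade to almost-sure convergence, write $\tind(F,K_m)$ as an average of indicator variables over ordered $k$-tuples of distinct vertices in $[m]$; disjoint pairs of tuples are independent by locality and contribute zero covariance, while the $O(m^{2k-1})$ overlapping pairs contribute at most $O(1/m)$ to the variance. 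Chebyshev yields convergence in probability, and a standard subsequence plus diagonalization argument (or a sharper fourth-moment estimate) upgrades this to almost-sure convergence simultaneously over the countable family of test complexes $F$. Any such realization of $(K_m)$ then provides the desired deterministic convergent sequence realizing $(\mu_n)$.

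The main obstacle is the variance bound in the converse: locality is precisely what decorrelates induced subcomplexes on disjoint subsets, and without it one would obtain only $L^1$ convergence rather than almost-sure convergence along a single realization. A secondary technical point is bridging the Borel--Cantelli subsequence to the full integer sequence; this is resolved either by observing that $\tind(F,K_m)$ changes by only $O(1/m)$ when a vertex is added, or by sharpening to a higher-moment bound whose tail sum is already summable along the full sequence.
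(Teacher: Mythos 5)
Your proposal follows essentially the same strategy as the one the paper references (Theorem~11.7 of Lovász, 2012, which the paper explicitly cites and declines to reproduce): M\"obius inversion on the finite lattice of labeled complexes on $[n]$ to pass from $\thom$-convergence to $\tind$-convergence, counting arguments over random injections to establish consistency and locality in the limit, and a Kolmogorov-extension plus second- (or fourth-) moment concentration argument exploiting locality for the converse. The technical points you single out --- extending the projective-system consistency to arbitrary finite subsets via isomorphism-invariance, and bridging the Borel--Cantelli subsequence to the full sequence via a Lipschitz estimate or a summable fourth-moment bound --- are precisely the points that arise in Lovász's treatment, and you resolve them in the same way.
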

The proof follows that of \citep[Theorem~11.7]{Lovasz2012} almost exactly, so we omit it here.
Furthermore, an initial connection with complexons is stated in the following result.
\begin{proposition}\label{prop:lccm-complexon}
  Let $W$ be a complexon.
  Define, for all $n\geq 1$, the probability measure for simplicial complexes $F$ on nodes $[n]$
  \begin{equation*}
    \mu_n(F) = \Pr\left\{\sample{K}(n,W)=F\right\} = \tind(F,W).
  \end{equation*}
  Then, the sequence $(\mu_n)_{n\geq 1}$ forms a local consistent random complex model.
\end{proposition}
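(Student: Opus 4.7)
The plan is to verify each of the three defining properties of a local consistent random complex model --- label invariance, consistency under restriction, and locality --- by exploiting the explicit sampling description of $\sample{K}(n,W)$ together with its identification with $\tind(\cdot,W)$ via \cref{lemma:ind-sample}. To make the properties transparent, I would first set up a joint coupling of all the models $\sample{K}(n,W)$ across $n$ on a single probability space: introduce \iid{} uniform random variables $X_1,X_2,\ldots$ on $[0,1]$ together with, for every finite subset $\sigma\subseteq\mathbb{N}$ with $|\sigma|\geq 2$, an independent uniform random variable $U_\sigma$ on $[0,1]$. Declare that the simplicial complex $K^{(n)}$ on node set $[n]$ contains $\sigma\subseteq[n]$ if and only if $U_{\sigma'}\leq W(X_{\sigma'})$ for every $\sigma'\subseteq\sigma$ with $|\sigma'|\geq 2$. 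Unrolling the inductive definition shows $K^{(n)}\sim\sample{K}(n,W)$, so $\mu_n(F)=\Pr\{K^{(n)}=F\}$.

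Isomorphism invariance is then immediate: for any bijection $\pi$ of $[n]$, relabeling by $\pi$ maps $(X_i)_{i\in[n]}$ and $(U_\sigma)_{\sigma\subseteq[n]}$ into families with the same joint distribution, so $\pi(K^{(n)})$ is equidistributed with $K^{(n)}$, giving $\mu_n(K)=\mu_n(K')$ for isomorphic $K,K'$. For consistency, observe that the induced subcomplex of $K^{(n)}$ on $[n-1]$ is measurable with respect to $(X_i)_{i\in[n-1]}$ and $(U_\sigma)_{\sigma\subseteq[n-1]}$, and the threshold rule defining membership on these indices is exactly the rule defining $K^{(n-1)}$; hence this restriction is almost surely equal to $K^{(n-1)}$, and summing $\mu_n(F)$ over simplicial complexes $F$ on $[n]$ whose restriction to $[n-1]$ equals $K$ yields $\mu_{n-1}(K)$.

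For locality, fix disjoint $S,T\subseteq[n]$. The induced subcomplex of $K^{(n)}$ on $S$ is measurable with respect to $(X_i)_{i\in S}$ and $(U_\sigma)_{\sigma\subseteq S}$, and likewise for $T$. Since $S\cap T=\emptyset$, these two collections of random variables are disjoint and hence independent, which establishes independence of the two induced subcomplexes. After the canonical relabelings $S\to[|S|]$ and $T\to[|T|]$, the laws of these subcomplexes are $\mu_{|S|}$ and $\mu_{|T|}$ respectively, as required.

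The only point requiring genuine justification --- and the main (minor) obstacle --- is establishing that the coupled construction truly yields $\sample{K}(n,W)$. One must check both that the all-at-once threshold condition enforces the ``closed under restriction'' structure of the inductive definition of $\sample{K}(n,W)$ (automatic, since failure of the threshold test at any face $\sigma'\subseteq\sigma$ forces $\sigma$ out of $K^{(n)}$), and that conditional on $(X_i)_{i\in[n]}$ the probability that $\sigma$ is included given that all its proper faces are present equals $W(X_\sigma)$ (which follows from the independence and uniformity of the $U_\sigma$'s). Once the coupling is in place, the three properties follow uniformly in $n$ from the three short arguments above.
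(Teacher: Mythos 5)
Your proof is correct. The paper itself states \cref{prop:lccm-complexon} without proof (only \cref{prop:lccm-convergent}, the subsequent proposition, is explicitly flagged as following \citealp[Lemma~11.8]{Lovasz2012}), so there is no in-paper argument to compare against; the coupling you construct is the natural one and is essentially the standard $W$-random construction adapted to complexons. A few points confirming its soundness: your threshold rule manifestly produces a downward-closed set, so $K^{(n)}$ is a simplicial complex; conditioning on $X$ and on inclusion of all strict faces of $\sigma$, the event $\{\sigma\in K^{(n)}\}$ reduces to $\{U_\sigma\le W(X_\sigma)\}$, which is independent of the conditioning and has conditional probability $W(X_\sigma)$, so the all-at-once construction agrees with the inductive one; and the invariance, consistency, and locality properties then follow exactly as you describe, since restrictions of $K^{(n)}$ to $S\subseteq[n]$ are measurable functions of the disjoint families $(X_i)_{i\in S}$ and $(U_\sigma)_{\sigma\subseteq S}$. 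It is worth noting that your consistency argument yields the stronger statement that the restriction to $[n-1]$ is almost surely equal to $K^{(n-1)}$ under the coupling, not merely equidistributed with it, which also immediately gives that the law of the induced subcomplex on any $S$ (after relabeling) is $\mu_{|S|}$ as you mention in the locality step.
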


Viewing the sampled simplicial complexes from a complexon as being equivalently sampled from a local consistent random complex model is a helpful perspective for establishing convergence of a complexon's samples in the cut-metric.
In particular,
\begin{proposition}\label{prop:lccm-convergent}
  Let $(\mu_n)_{n\geq 1}$ be a local consistent random complex model.
  For each $n\geq 1$, independently draw a simplicial complex $K_n$ on nodes $[n]$ according to the distribution $\mu_n$.
  Then, with probability $1$,
  \begin{enumerate}[label={(\roman*)}]
  \item The sequence $(K_n)$ is convergent
  \item For all $n$, $\lim_{m\to\infty}\mu_{K_m,n} = \mu_{n}$.
  \end{enumerate}
  In particular, if $(\mu_n=\sample{K}(n,W))_{n\geq 1}$ for some complexon $W$, we have that a sequence of increasingly large simplicial complexes sampled from a complexon converges to $\faceted{W}$ with probability $1$.
\end{proposition}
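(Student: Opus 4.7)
The plan is to reduce both almost-sure claims to a single pointwise statement: for each fixed simplicial complex $F$ with $\Vert(F)=[n]$, the sequence $\mu_{K_m,n}(F) = \tind(F,K_m)$ converges almost surely to $\mu_n(F)$ as $m\to\infty$. Since there are only countably many isomorphism classes of simplicial complexes, a union bound then yields simultaneous almost-sure convergence for all $F$; this is exactly statement (ii), and statement (i) follows because $\thom(F,\cdot)$ is a finite nonnegative linear combination of terms $\tind(F',\cdot)$ for simplicial complexes $F'$ on $[n]$.

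First I would check that $\Ex[\tind(F,K_m)]=\mu_n(F)$: iterating the consistency relation and invoking isomorphism invariance of $(\mu_n)$, for any $n$-subset $S\subseteq[m]$ and any bijection $\phi:[n]\to S$, the pullback $\phi^{-1}(K_m|_S)$ has distribution $\mu_n$, so averaging over injective maps gives the identity. The main step is then to establish sub-Gaussian concentration of $\tind(F,K_m)$ around $\mu_n(F)$ via the standard Hoeffding trick for $U$-statistics: partition $[m]$ into $t=\lfloor m/n\rfloor$ disjoint $n$-blocks $B_1,\ldots,B_t$, and observe that locality makes the indicators $\indic\{K_m|_{B_i}\cong F\}$ mutually independent, so their average $\bar Y$ is sub-Gaussian with rate $t$ by Hoeffding's inequality on bounded i.i.d.\ summands. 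A short combinatorial calculation identifies $\tind(F,K_m)$ with $(|\Aut(F)|/n!)\cdot\Ex_\pi[\bar Y_\pi]$ for $\pi$ a uniformly random permutation of $[m]$ (where $\bar Y_\pi$ is the block average under the permuted partition), and Jensen's inequality applied to the moment generating function transfers the sub-Gaussian tail from each $\bar Y_\pi$ to $\tind(F,K_m)$, yielding tails of order $\exp(-c_F m)$. These are summable in $m$, so the Borel--Cantelli lemma (using independence of the $K_m$ across $m$) delivers the desired almost-sure convergence.

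For the ``in particular'' claim, \cref{lemma:ind-sample} gives $\mu_n(F)=\tind(F,W)$, so $\tind(F,K_m)\to\tind(F,W)$ almost surely for every $F$, and hence also $\thom(F,K_m)\to\thom(F,W)$ almost surely for every $F$. \Cref{thm:hom-cut-eq} then translates this convergence of homomorphism densities into convergence $W_{K_m}\to W$ in the canonical topology, i.e.\ $\faceted{W_{K_m}}\to\faceted{W}$ in the cut-metric. I expect the main obstacle to be carefully justifying the permutation-averaging identity and verifying that Hoeffding's MGF bound survives the Jensen step cleanly, so that the exponential concentration genuinely transfers from the block averages to the full $U$-statistic $\tind(F,K_m)$; the remainder is standard bookkeeping via Borel--Cantelli together with the convergence theorems already established in the paper.
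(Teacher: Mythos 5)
Your proof is correct and follows essentially the same strategy as the reference proof (\citealp[Lemma~11.8]{Lovasz2012}) that the paper defers to: the expectation identity via consistency and isomorphism-invariance, exponential concentration of the $U$-statistic $\tind(F,K_m)$ exploiting the mutual independence of restrictions to disjoint blocks supplied by locality, and Borel--Cantelli plus a countable union over isomorphism classes. Two small remarks worth tightening if you write this out: the coefficients expressing $\thom(F,\cdot)$ in terms of $\tind(F',\cdot)$ are $m$-dependent through the normalizations $P(m,\vert(F'))/m^{\vert(F)}$ (though they converge, so step~(i) still follows), and the independence of the $K_m$ across $m$ is not actually needed for the first Borel--Cantelli lemma.
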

We also omit the proof, as it resembles that of \citep[Lemma~11.8]{Lovasz2012} with very little modification.
\Cref{prop:lccm-complexon,prop:lccm-convergent} together indicate that for a given complexon $W$ and a sequence of increasingly large simplicial complexes $(K_n\sim\sample{K}(n,W))_{n\geq 1}$, $K_n\to\faceted{W}$ with probability $1$.
That is to say, we can (almost surely) generate a sequence converging to an arbitrary complexon by simply taking a sequence of large simplicial complexes sampled from it.
Indeed, this recovers \cref{coro:sampling-convergence}.

\subsubsection{Homogenous random complexes}

Some simple instances of local consistent random complex models have been described in the literature.
We go through the dense variants of these models, specifying both the corresponding complexon and the random complex model.

\citet{Linial2006,Meshulam2009} considered the homological connectivity of random simplicial complexes generated in the following way.
For some real number $p\in[0,1]$ and integers $d>1,n\geq 1$, a random simplicial complex $K_{n,p}$ is formed on the node set $[n]$ by including all $d-1$ simplices on $[n]$, and including each element of $\binom{[n]}{d+1}$ independently with probability $p$.
This model is known as the ``Linial-Meshulam model.''
One can define a complexon whose samples are distributed in this way.
Define $W\in\kerns$ so that for all $x\in[0,1]^{c+1}$ for $c<d$, we have $W(x)=1$, and for all $x\in[0,1]^{d+1}$ we have $W(x)=p$.
It is clear to see that for all $n$, the random simplicial complex $\sample{K}(n,W)$ follows the Linial-Meshulam model.

If the Linial-Meshulam assumes a fully-connected $(d-1)$-skeleton and fills in $d$-simplices at random, the random flag, or clique, complex~\citep{Kahle2009} does the opposite.
The flag complex, again for some $p\in[0,1]$, yields a random complex on nodes $[n]$ by beginning with an Erd\H{o}s-R\'{e}nyi random graph $G_n$ with edge probability $p$, and then taking the maximal simplicial complex whose $1$-skeleton is equal to $G_n$.
This can also be expressed with an appropriately constructed complexon.
Define $W\in\kerns$ so that for all $x\in[0,1]^2$, $W(x)=p$, and otherwise $W(x)=1$.
Then, the random simplicial complex $\sample{K}(n,W)$ is a random flag complex.

As a generalization of both of these, \citet{Costa2016} considers random simplicial complexes parameterized by a sequence of coefficients $p_1,p_2,p_3,\ldots$ contained in $[0,1]$.
The multiparameter, or Costa-Farber, random simplicial complex is distributed according to $\sample{K}(n,W)$, where $W$ is such that for all $x\in[0,1]^{d+1}$, $W(x)=p_d$.
Further work showed that in the dense, or medial, regime of $p_j>0$ for all $j$, the $d^{\mathrm{th}}$ homology group of such a random complex in a given dimension is nontrivial for $n$ in a logarithmically-sized interval, being trivial elsewhere with high probability~\citep{Farber2020}.
That is to say, as $n\to\infty$, the homology group of a fixed dimension for complexes sampled according to $\sample{K}(n,W)$ vanishes with probability tending to one.
For some complexons, particularly those bounded away from zero in correspondence with the medial regime studied by \citet{Farber2020}, it is seemingly difficult then to speak of the ``homological structure of a complexon'' in general.

These three models are all instances of homogenous random simplicial complexes, where the probability of a simplex existing is dependent only on its dimension.
For a complexon $W$, this corresponds to $W$ being constant on $[0,1]^{d+1}$ for each $d\geq 1$.
\subsection{Other Random Simplicial Complex Models}

As described in \cref{sec:complexons:sampling}, a random simplicial complex sampled from a complexon following $\sample{K}(n,W)$ essentially picks nodes $[n]$ with ``latent positions'' $x_1,\ldots,x_n$ uniformly distributed in the unit interval.
That is to say, the random complex structure is dependent on each node's position in the space $[0,1]$.
In the ensuing construction, no topological properties of $[0,1]$ were leveraged, suggesting that the definition of a complexon can be extended to nodes sampled from any Borel probability space.
Let $(\Omega,\mathcal{F},\mu)$ be a Borel probability space, which we denote by $\Omega$ for short.
A complexon on $\Omega$ is a measurable function
\begin{equation}
  W:\djunion_{d\geq 1}\Omega^{d+1}\to[0,1].
\end{equation}
All of the basic constructions so far generalize immediately to complexons on $\Omega$: sampling, homomorphism densities, cut-metrics, and so on, with the same results holding.

Defining complexons over alternative probability spaces is useful for describing random simplicial complexes tied to a natural underlying space.
We consider here dense random geometric \v{C}ech complexes~\citep{Bobrowski2022}.
Let $\mu$ be a probability measure on $\mathbb{R}^d$ for some integer $d\geq 1$, and let $\mathcal{X}$ denote the support of $\mu$.
For some $\epsilon>0$, define a complexon $W_\epsilon$ on $\mathcal{X}$ so that, for any points $x_1,\ldots,x_{d+1}\in\mathcal{X}$,
\begin{equation*}
  W_\epsilon(x_1,\ldots,x_{d+1}) = \indic(\bigcap_{j=1}^{d+1}B_{\epsilon/2}(x_j)\neq\emptyset),
\end{equation*}
where $B_{\epsilon/2}(x_j)$ indicates the open ball in $\mathbb{R}^d$ of radius $\epsilon/2$ centered about $x_j$.
That is to say, $W_\epsilon(x_1,\ldots,x_{d+1})=1$ if and only if the collection of points $\{x_1,\ldots,x_{d+1}\}$ has diameter less than $\epsilon$.

Going back to our definition of the random simplicial complex $\sample{K}(n,W_\epsilon)$, let $\mathcal{X}_n=\{x_1,\ldots,x_n\}$ be such that $x_1,\ldots,x_n$ are \iid{} samples from the probability distribution $\mu$.
Form a simplicial complex $K$ such that $\Vert(K)=[n]$.
Inductively for $d\geq 1$ until termination, for each $d+1$-subset $\sigma\subseteq[n]$ such that every strict subset of $\sigma$ is contained in $K$, include $\sigma$ in $K$ with probability $W_\epsilon(x_\sigma)$.
The random simplicial complex $K$ formed in this way is, much like before, said to be sampled from the distribution $\sample{K}(n,W_\epsilon)$.
Moreover, under our particular definition of $W_\epsilon$, it is clear that the resulting simplicial complex $K$ is the \v{C}ech complex $\check{C}_\epsilon(\mathcal{X}_n,\mathbb{R}^d)$.
In this way, $\sample{K}(n,W_\epsilon)$ is the distribution of \v{C}ech complexes with fixed radius $\epsilon$ formed by sampling $n$ \iid{} points in $\mathcal{X}$ from the probability distribution $\mu$.

\begin{figure*}
  \centering
  \begin{tikzpicture}
  \pgfmathsetseed{1283}

  \begin{groupplot}[
    group style={
      group size=3 by 1,
      group name=myplots,
      horizontal sep=1cm,
    },
    width=6cm,
    height=6cm,
    axis lines=none,
    ylabel near ticks,
    legend cell align=left,
    ]

    \nextgroupplot[
    xmin=0,xmax=1,ymin=0,ymax=1,
    ticks=none,
    ]
    \addplot graphics[xmin=0,xmax=1,ymin=0,ymax=1] {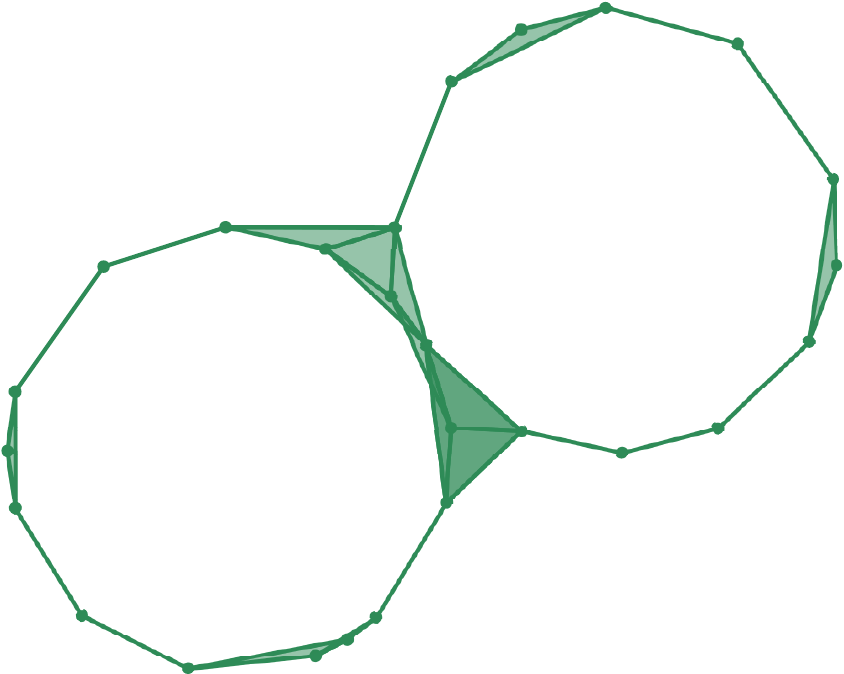};
    
    \nextgroupplot[
    xmin=0,xmax=1,ymin=0,ymax=1,
    ticks=none,
    ]
    \addplot graphics[xmin=0,xmax=1,ymin=0,ymax=1] {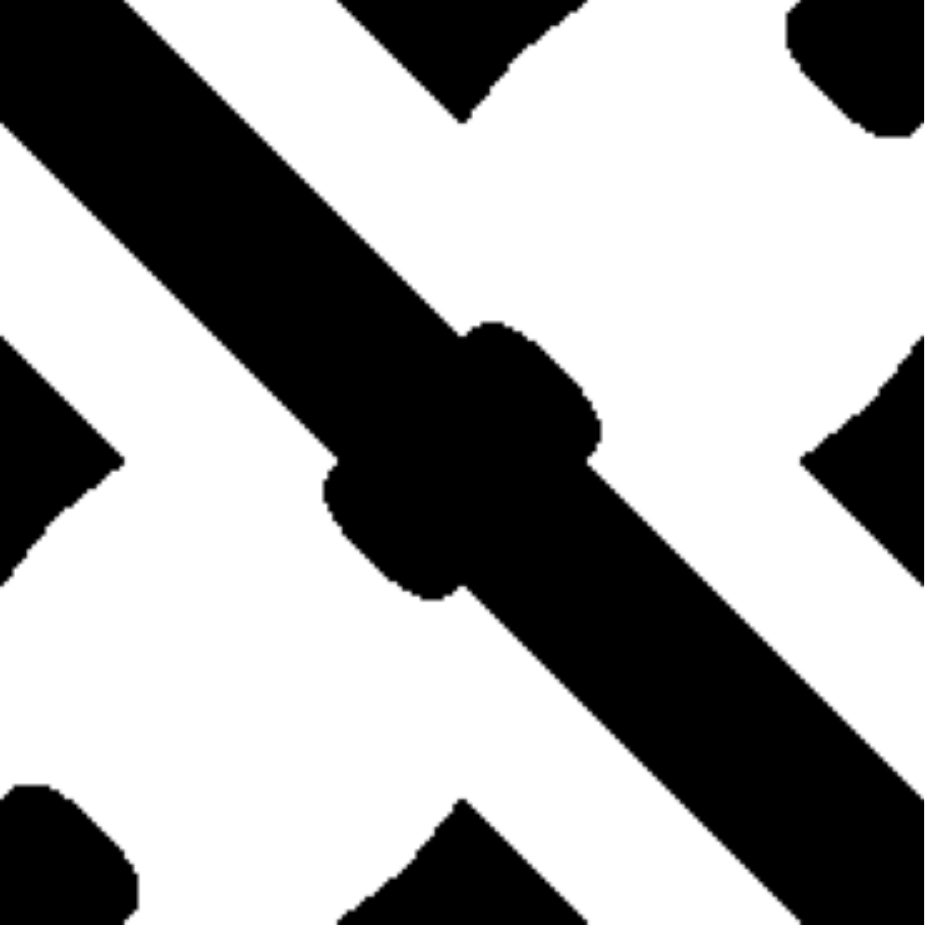};

    \nextgroupplot[
    xmin=0,xmax=1,ymin=0,ymax=1,
    ticks=none,
    ]
    \addplot graphics[xmin=0,xmax=1,ymin=0,ymax=1] {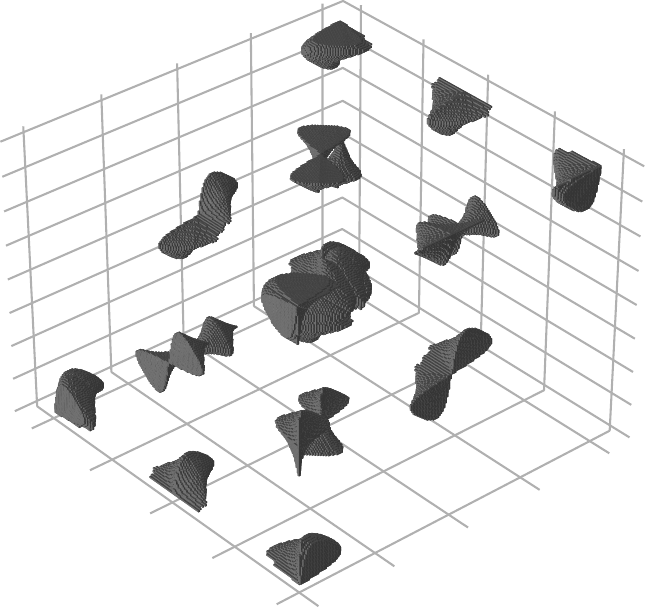};

  \end{groupplot}
  
  \foreach \plt/\lab in {c1r1/a,c2r1/b,c3r1/c} {
    \node[anchor=north west,fill=white,fill opacity=0.25,opacity=1] at (myplots \plt.north west) {(\lab)};
  }
\end{tikzpicture}
  \caption{%
    \textbf{(a)} \v{C}ech complex formed from points uniformly sampled from a bouquet of two circles in $\mathbb{R}^2$.
    \textbf{(b)} The complexon $W_\epsilon$ as defined on $[0,1]^2$ (dark regions indicate a value of $1$).
    \textbf{(c)} The complexon $W_\epsilon$ as defined on $[0,1]^3$ (dark regions indicate a value of $1$).
  }
  \label{fig:cech}
\end{figure*}

Revisiting our example of the \v{C}ech complex formed from the bouquet of two circles in \cref{sec:intro} (\cf{}~\cref{fig:bouquet-example}), we can now couch our understanding of convergent subsampled \v{C}ech complexes in the language of complexons.
As before, let $\mathcal{X}$ be the bouquet of two unit circles in $\mathbb{R}^2$.
For any $n\geq 1$, let $\mathcal{X}_n$ be a collection of $n$ \iid{} samples from the uniform distribution on $\mathcal{X}$.
An instance of the random \v{C}ech complex $\check{C}_\epsilon(\mathcal{X}_n,\mathbb{R}^2)$ is pictured in \cref{fig:cech}~(a).

We can equivalently describe the random \v{C}ech complex as a complexon.
For the purpose of illustration, parameterize the space $\mathcal{X}$ as a curve $c:[0,1]\to\mathbb{R}^2$, defined as
\begin{equation*}
  c(t) =
  \begin{cases}
    (\cos(4\pi t)-1,\sin(4\pi t)) & t<1/2 \\
    (1-\cos(4\pi t),\sin(4\pi t)) & t\geq 1/2.
  \end{cases}
\end{equation*}
The uniform measure $\mu$ on $\mathcal{X}$ is merely the pushforward measure of the uniform measure on $[0,1]$ via the map $c$.
Moreover, $c$ is injective almost everywhere, so we can represent the complexon modeling $\check{C}_\epsilon(\mathcal{X},\mathbb{R}^2)$ as a standard complexon on $[0,1]$, up to a set of measure zero.
The complexon $W_\epsilon$ as defined on $[0,1]^2$ and $[0,1]^3$ is pictured in \cref{fig:cech}~(b,c).
Indeed, $W_\epsilon$ can be thought of as the indicator function for simplices in the \v{C}ech complex $\check{C}_\epsilon(\mathcal{X},\mathbb{R}^2)$.
\subsection{Convergence of Nonuniform Hypergraphs}
\label{sec:remarks:hypergraphs}

An open problem in the theory of graph homomorphisms and graph limits is the characterization of nonuniform hypergraphs with unbounded edges~\citep{Lovasz2008}.
As noted in \cref{sec:complexons:sampling}, the distribution $\sample{K}(n,W)$ can be thought of in terms of ``lower sampling'' following \citep{Farber2022}, where a random hypergraph $H$ is drawn according to $W$, and then taking the largest simplicial complex contained in that hypergraph.
On the other hand, the ``upper sampling'' approach~\citep{Farber2022} can be reproduced by drawing a random hypergraph $H$ according to $\faceted{W}$, and then taking the smallest simplicial complex that contains that hypergraph.
Both of these approaches yield the same distribution $\sample{K}(n,W)$.

The viewpoint of simplicial complexes formed from the upper or lower bounds of hypergraphs sheds some light on limits of dense, nonuniform hypergraphs.
For a hypergraph $H$, let $\hlower{H}$ be the largest simplicial complex contained in $H$, and $\hupper{H}$ the smallest simplicial complex containing $H$.
We say that a sequence of hypergraphs $(H_n)_{n\geq 1}$ is upper-lower-convergent (or, for brevity, UL-convergent) if the following conditions hold simultaneously for some complexon $W$:
\begin{enumerate}[label={(\roman*)}]
\item $(\hupper{H_n})_{n\geq 1}$ is convergent with limiting complexon $W$
\item $(\hlower{H_n})_{n\geq 1}$ is convergent with limiting complexon $\faceted{W}$.
\end{enumerate}
If this is the case, we say that $H_n\to W$ as $n\to\infty$.
Since for all $n$, $\hupper{H_n}\in\faceted{\kerns}$, by~\cref{lemma:cut-compactness,lemma:facet-continuous}, it holds that $W\in\faceted{\kerns}$.

Based on \cref{lemma:sampling}, we know that large simplicial complexes sampled from a complexon are close to the faceted complexon in the cut-metric, with high probability.
Based on the relationship between faceted complexons and upper/lower sampling, we establish a similar approach to show that sequences of hypergraphs sampled from faceted complexons are UL-convergent.
Let $W$ be a complexon, and $n\geq 1$ an integer.
Similar to the definition of $K\sim\sample{K}(n,W)$, we define a random hypergraph $H$ as follows.
First, take $n$ \iid{} samples $x_1,\ldots,x_n$ uniformly at random in the unit interval $[0,1]$.
Let $H$ be a hypergraph with $\Vert(H)=[n]$.
Then, for each $\sigma\in 2^{[n]}$, include $\sigma\in H$ with probability $W(x_\sigma)$.
We call the distribution from which $H$ is drawn $\sample{H}(n,W)$.

\begin{lemma}\label{lemma:hypergraph-sampling}
  For a complexon $W$ and integer $n\geq 1$, it holds that
  \begin{equation*}
    \sample{K}(n,W)
    \overset{D}{=} \hlower{\sample{H}(n,W)}
    \overset{D}{=} \hupper{\sample{H}(n,\faceted{W})},
  \end{equation*}
  where $\overset{D}{=}$ denotes equality in distribution.
\end{lemma}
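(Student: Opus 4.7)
My plan is a coupling argument that exhibits all three random simplicial complexes on a common probability space. Conditioning on the iid uniform latent coordinates $x = (x_1, \ldots, x_n)$, I introduce a family of independent Bernoulli variables $\{B_\sigma : \sigma \subseteq [n]\}$ with $\Pr(B_\sigma = 1 \mid x) = W(x_\sigma)$; the convention that $W \equiv 1$ on singletons and the empty set makes those Bernoullis deterministically equal to $1$. The first step is to show, by induction on $|\sigma|$, that the inductive construction of $\sample{K}(n, W)$ produces a complex $K$ satisfying $\sigma \in K$ if and only if $B_{\sigma'} = 1$ for every $\sigma' \subseteq \sigma$: the sampling rule adds $\sigma$ exactly when its strict subsets have all been added (so their Bernoullis equal $1$ by the inductive hypothesis) and the fresh coin $B_\sigma$ succeeds.

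For the first equality, set $H = \{\sigma : B_\sigma = 1\}$. By independence, $H$ is distributed as $\sample{H}(n, W)$. The largest simplicial complex contained in $H$ is, by definition, $\hlower{H} = \{\sigma : \sigma' \in H\ \forall \sigma' \subseteq \sigma\}$, which coincides pointwise with the $K$ described above. This realizes $\hlower{\sample{H}(n, W)}$ and $\sample{K}(n, W)$ as the same random object under the coupling, yielding equality in distribution.

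For the second equality, define $Y_\tau = \prod_{\sigma \subseteq \tau} B_\sigma$ for every $\tau \subseteq [n]$. Each $Y_\tau$ is marginally $\mathrm{Bernoulli}(\faceted{W}(x_\tau))$ by the very definition of $\faceted{W}$, matching the hyperedge marginals of $\sample{H}(n, \faceted{W})$. Because $Y_\tau$'s product expression contains every factor of $Y_{\tau'}$ whenever $\tau' \subseteq \tau$, the hypergraph $H' = \{\tau : Y_\tau = 1\}$ is automatically downward-closed, and thus $\hupper{H'} = H' = K$ on this coupling. The main obstacle is that the $Y_\tau$'s are jointly dependent — they share the underlying $B_\sigma$'s — whereas $\sample{H}(n, \faceted{W})$ samples hyperedges independently, so the coupling alone does not identify the two laws.

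To overcome this, I plan to compute directly, for any fixed simplicial complex $F$ on $[n]$, the probability $\Pr\{\hupper{\sample{H}(n, \faceted{W})} = F \mid x\}$. The event $\hupper{H} = F$ requires that the hypergraph $H$ be contained in $F$ and that each facet of $F$ survive, i.e., that $Z_\tau = 1$ for $\tau \in \faceted{F}$ and $Z_\tau = 0$ for $\tau \notin F$, while $Z_\tau$ for non-facet simplices of $F$ are unconstrained. Enforcing the telescoping product structure of $\faceted{W}(x_\tau) = \prod_{\sigma \subseteq \tau} W(x_\sigma)$ across these factors and integrating against $x$ should reduce, via \cref{lemma:ind-sample}, to the expression $\tind(F, W) = \Pr\{\sample{K}(n, W) = F\}$; this combinatorial/integral reconciliation is the principal technical step and what distinguishes this equality from the more straightforward first one.
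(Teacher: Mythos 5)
Your coupling argument for the first equality $\sample{K}(n,W)\overset{D}{=}\hlower{\sample{H}(n,W)}$ is correct: conditioning on the latent coordinates, introducing independent $B_\sigma\sim\mathrm{Bernoulli}(W(x_\sigma))$, and showing by induction that $\sigma\in K$ iff $B_{\sigma'}=1$ for all $\sigma'\subseteq\sigma$ realizes both processes as the same random object. Your diagnosis of the obstacle for the second equality is also right: the $Y_\tau=\prod_{\sigma\subseteq\tau}B_\sigma$ have the marginals of $\sample{H}(n,\faceted{W})$ but are mutually dependent, so exhibiting $K=\hupper{\{\tau:Y_\tau=1\}}$ does not identify its law with that of $\hupper{\sample{H}(n,\faceted{W})}$. (The paper states \cref{lemma:hypergraph-sampling} without proof, so there is no argument to compare against directly.)

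The gap is precisely in what you defer as the ``principal technical step,'' and it should not be deferred, because as computed it does not hold. Conditioning on $x$, the event $\hupper{\sample{H}(n,\faceted{W})}=F$ requires every facet of $F$ to be a hyperedge and \emph{every} $\tau\notin F$ to be absent, so its conditional probability is
\begin{equation*}
\prod_{\tau\in\faceted{F}}\faceted{W}(x_\tau)\prod_{\tau\notin F}\bigl(1-\faceted{W}(x_\tau)\bigr),
\end{equation*}
whereas the inductive construction behind $\sample{K}(n,W)$ only consults a coin for simplices whose boundary is already accepted, giving
\begin{equation*}
\prod_{\sigma\in F}W(x_\sigma)\prod_{\tau\in\antifaceted{F}}\bigl(1-W(x_\tau)\bigr).
\end{equation*}
These differ in two ways: the positive part $\prod_{\tau\in\faceted{F}}\faceted{W}(x_\tau)$ raises $W(x_\sigma)$ to the number of facets of $F$ containing $\sigma$, which exceeds one whenever $\sigma$ lies in several facets and $W(x_\sigma)\notin\{0,1\}$; and the first expression carries surplus factors $\bigl(1-\faceted{W}(x_\tau)\bigr)$ for every $\tau\notin F$ that is not an antifacet, for which $\sample{K}(n,W)$ never even flips a coin. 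Concretely, take $F$ on $\{1,2,3\}$ with facets $\{1,2\},\{1,3\}$ and a complexon with $W\equiv 1/2$ in dimensions one and two: the first display gives $(1/2)^3(1-\tfrac{1}{16})=15/128$, the second gives $1/8$, and the gap survives integration over $x$. So the reduction to \cref{lemma:ind-sample} that you hope for fails; either the intended reading of $\hupper{H}$ (equivalently, of the upper model of \citet{Farber2022}) differs from the literal ``smallest simplicial complex containing $H$'' that both of us are using, or the second equality requires an additional hypothesis such as $W$ being $\{0,1\}$-valued. Before outlining a proof you would need to confront this: the reconciliation is not a routine calculation but the entire content of the claim, and it does not hold as written.
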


We now consider the relationship between UL-convergence, simplicial homomorphisms, and hypergraph homomorphisms.
For a hypergraph $H$, define a complexon $W_H$ in the same way as one does for simplicial complexes, \ie, by taking indicator functions of tuples defined on a partition of $[0,1]$.
From this, we inherit all the usual notions of homomorphisms and cut-metrics for hypergraphs.
For a hypergraph $H$ and a complexon $W$, define
\begin{equation*}
  \thom(H,W) = \int_{[0,1]^{\Vert(H)}}\prod_{\sigma\in H}W(x_\sigma)\prod_{j\in\Vert(H)}\drv{x_j}.
\end{equation*}

With the notion of a homomorphism for hypergraphs, it is natural to ask whether the convergence of a sequence of hypergraphs in all hypergraph homomorphisms implies UL-convergence.
Indeed, convergence of all hypergraph homomorphism densities is enough to establish convergence of the sequence $(\hlower{H_n})$, but not to establish convergence of $(\hupper{H_n})$.

\begin{counterexample}\label{counter:uniform-hypergraphs}
  For some $p\in[0,1]$ define $W_p\in\kerns$ so that $W(x)=p$ for $x\in[0,1]^3$, and $W(x)=0$ otherwise.
  Define a sequence of hypergraphs $(H_{n,p})_{n\geq 1}$ where each $H_{n,p}$ is a random hypergraph distributed according to $\sample{H}(n,W_p)$.
  One can check that with probability $1$, for any hypergraph $G$, $t(G,H_{n,p})$ is convergent, so that $(H_{n,p})$ is a convergent sequence in the sense of homomorphisms.

  Setting $p=1$, it is obvious that $(H_{n,1})$ is not UL-convergent, since $\hlower{H_{n,1}}$ is the empty simplicial complex on $n$ nodes, and $\hupper{H_{n,1}}$ is the complete $2$-dimensional simplicial complex on $n$ nodes.
  Then, one can see that $\hupper{H_{n,1}}\to W$, where $W(x)=1$ for $x\in[0,1]^{d+1}$ when $d\in\{1,2\}$, and $W(x)=0$ otherwise.
  On the other hand, $\hlower{H_{n,1}}\to 0$.
  It is not true that $\faceted{W}=0$ in this case, so UL-convergence does not hold.
\end{counterexample}

In the above counterexample, the hypergraphs $(H_{n,1})$ were drawn from a complexon $W_1\in\kerns\setminus\faceted{\kerns}$, \ie, ``non-faceted'' complexons.
In light of the UL-limit of a convergent hypergraph sequence necessarily being an element of $\faceted{\kerns}$, this may imply some utility of the extra structure imposed by $\faceted{\kerns}$.
This is noted in the following proposition.

\begin{proposition}\label{prop:faceted-hypergraphs}
  Let $W\in\faceted{\kerns}$ be given, and take a sequence $(H_n)_{n\geq 1}$ of random hypergraphs $H_n\sim\sample{H}(n,W)$.
  Then, $(H_n)$ is UL-convergent with limit $W$ with probability $1$.
\end{proposition}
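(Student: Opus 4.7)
The plan is to reduce each of the two UL-convergence conditions to an application of the sampling lemma (\cref{lemma:sampling}) via the hypergraph-to-simplex identification (\cref{lemma:hypergraph-sampling}), and then pass to almost-sure convergence using Borel-Cantelli. Because $W\in\faceted{\kerns}$, I would start by fixing any complexon $V\in\kerns$ such that $W=\faceted{V}$, reducing the goal to showing $\hupper{H_n}\to W$ and $\hlower{H_n}\to\faceted{W}$ almost surely in the canonical topology.

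For the upper condition, I would use \cref{lemma:hypergraph-sampling} to identify $\hupper{H_n}=\hupper{\sample{H}(n,\faceted{V})}\overset{D}{=}\sample{K}(n,V)$. Then, for any finite nonnegative sequence $\alpha_1,\ldots,\alpha_d$, \cref{lemma:sampling} gives, with probability at least $1-\exp(-n/(2\log_2 n))$,
\begin{equation*}
  \cutd_\cut(W,\hupper{H_n};(\alpha_j)) \leq \frac{8\cdot 2^d+1}{\sqrt{\log_2 n}}\sum_{j=1}^d\alpha_j,
\end{equation*}
since the right-hand side bounds a probability that depends only on the law of the random complex. As the failure probabilities are summable in $n$, Borel-Cantelli yields almost-sure convergence in this truncated cut-distance for every fixed $(\alpha_j)$, which via \cref{lemma:counting} and \cref{thm:hom-cut-eq} (together with a countable union over isomorphism classes of test simplicial complexes) upgrades to convergence $\hupper{H_n}\to W$ in the canonical topology almost surely.

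The lower condition would follow the same pattern: \cref{lemma:hypergraph-sampling} gives $\hlower{H_n}\overset{D}{=}\sample{K}(n,W)$, and the same sampling lemma plus Borel-Cantelli argument yields $\hlower{H_n}\to\faceted{W}$ almost surely. Intersecting the two probability-one events produces a single probability-one event on which both convergences hold simultaneously, establishing UL-convergence of $(H_n)$ with limit $W$.

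The only subtle point, rather than a genuine obstacle, is transferring the probabilistic bound of \cref{lemma:sampling} from $\sample{K}(n,\cdot)$ to the hypergraph-derived complexes $\hupper{H_n}$ and $\hlower{H_n}$, which are known only to be equal in distribution to the relevant sampling distributions through \cref{lemma:hypergraph-sampling}. This resolves immediately, as \cref{lemma:sampling} bounds a probability that depends only on the distribution of the random simplicial complex, so it applies verbatim to any random variable with the same law.
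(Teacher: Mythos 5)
Your proposal is correct and follows essentially the same route as the paper's proof: fix $V$ with $\faceted{V}=W$, use \cref{lemma:hypergraph-sampling} to identify $\hupper{H_n}\overset{D}{=}\sample{K}(n,V)$ and $\hlower{H_n}\overset{D}{=}\sample{K}(n,W)$, and invoke almost-sure convergence of sampled complexes to the faceted complexon. The paper simply cites \cref{coro:sampling-convergence} rather than re-running the Borel--Cantelli argument and the upgrade to the canonical topology, but the content is identical.
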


\begin{proof}
  For the sequence $(H_n)$, note that for each $n$, $\hlower{H_n}$ is distributed according to $\sample{K}(n,W)$ by \cref{lemma:hypergraph-sampling}.
  By \cref{coro:sampling-convergence}, this implies that $(\hlower{H_n})\to\faceted{W}$ with probability $1$.

  Similarly, since $W\in\faceted{\kerns}$, there is some $U\in\kerns$ such that $W=\faceted{U}$.
  Then, for each $n$, $\hupper{H_n}$ is distributed according to $\sample{K}(n,U)$, again by \cref{lemma:hypergraph-sampling}.
  Thus, $(\hupper{H_n})\to W$ with probability $1$.
\end{proof}

\Cref{prop:faceted-hypergraphs} and \cref{counter:uniform-hypergraphs} together indicate that considering the upper and lower simplicial complexes of a sequence of hypergraphs may help establish a theory of nonuniform hypergraphs, but fails in cases where the limiting object is not a faceted complexon.

\acks{We would like to acknowledge support for this project from the National Science Foundation (NSF grant CCF-2008555).}

\appendix


\section{Proof of \Cref{lemma:sampling}}
\label{sec:proofs:sampling}

\subsection{Equipartitioning}

Let $\partition{P}$ be a partition of $[0,1]$ into finitely many measurable sets.
For a complexon $W$, define the projection $W_{\partition{P}}$ as follows.
For $\{p_j\in P_j:j\in[d+1]\}$, where each $P_j$ is an element of $\partition{P}$, put
\begin{equation*}
  W_{\partition{P}}(p_1,\ldots,p_{d+1}) = \int_{\prod_{j=1}^{d+1}P_j} W(x_1,\ldots,x_{d+1})\drv{x}.
\end{equation*}
The complexon $W_{\partition{P}}$ is an instance of a stepfunction on $\partition{P}$, \ie, $W_{\partition{P}}(x_1,\ldots,x_{d+1})$ is only dependent on which sets in $\partition{P}$ the points $x_1,\ldots,x_{d+1}$ are contained in.
We establish a result stating that any complexon can be approximated in the cut-distance by a piecewise constant complexon with respect to an equipartition of the unit interval.
The following is a weak regularity lemma, stating that any complexon can be approximated by a stepfunction over an appropriate partition.
It is a trivial extension of \citep[Theorem~12]{Frieze1999}.

\begin{theorem}\label{thm:partition}
  Let $W$ be a complexon.
  For given integers $n>1,d\geq 1$, there is a partition $\partition{P}$ of $[0,1]$ into $n$ measurable sets such that there exists a stepfunction $U$ on $\partition{P}$ such that for all $1\leq j\leq d$,
  \begin{equation*}
    \labelcutd_{\ncut{j}}(W,U)\leq\sqrt{\frac{d+1}{\log_2{n}}}.
  \end{equation*}
\end{theorem}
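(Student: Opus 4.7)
The plan is to adapt the iterative greedy partition refinement of Frieze and Kannan~\citep{Frieze1999} to the complexon setting, controlling the cut-norm in every dimension $j \leq d$ simultaneously using a single partition of $[0,1]$. Setting $\epsilon := \sqrt{(d+1)/\log_2{n}}$, I would initialize with the trivial partition $\partition{P}_0 = \{[0,1]\}$ and proceed iteratively: at step $t$, let $U_t := W_{\partition{P}_t}$ denote the $L^2$-projection of $W$ onto stepfunctions on $\partition{P}_t$ (applied on each $[0,1]^{j+1}$ separately). If some $j \leq d$ has $\labelcutd_{\ncut{j}}(W, U_t) > \epsilon$, extract measurable witness sets $S_1, \ldots, S_{j+1} \subseteq [0,1]$ nearly realizing the supremum in $\|\cdot\|_{\ncut{j}}$, and let $\partition{P}_{t+1}$ be the common refinement of $\partition{P}_t$ with these sets; this multiplies the number of parts by at most $2^{j+1} \leq 2^{d+1}$.

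The engine is the standard energy-increment argument: since $U_{t,j}$ is the $L^2$-orthogonal projection of $W|_{[0,1]^{j+1}}$ onto stepfunctions with respect to $\partition{P}_t$, Pythagoras gives $\|W|_{[0,1]^{j+1}}\|_2^2 = \|U_{t,j}\|_2^2 + \|W|_{[0,1]^{j+1}} - U_{t,j}\|_2^2$, and a cut-norm violation of size $>\epsilon$ in dimension $j$ witnessed by $S_1, \ldots, S_{j+1}$ forces $\|U_{t+1,j}\|_2^2 \geq \|U_{t,j}\|_2^2 + \epsilon^2$ after refining by these sets (by Cauchy--Schwarz applied to the indicator of $\prod_i S_i$ tested against $W - U_{t,j}$). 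Moreover, $\|U_{t,j'}\|_2^2$ is monotone nondecreasing under refinement for every other $j'$. Since each $\|U_{t,j}\|_2^2 \leq 1$, the procedure terminates in finitely many steps, at which point $\labelcutd_{\ncut{j}}(W,U) \leq \epsilon$ holds for every $j \leq d$ by construction.

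The main obstacle is balancing the per-step partition-size growth (a factor of $2^{j+1}$ when refining in dimension $j$) against the total energy budget across all $d$ dimensions in order to recover precisely the stated constant $\sqrt{(d+1)/\log_2{n}}$. The naive accounting using total energy $\sum_j \|U_{t,j}\|_2^2 \leq d$ yields a bound closer to $\sqrt{d(d+1)/\log_2 n}$; sharpening to the stated constant requires always refining in the dimension with the largest current violation and allocating the $\log_2{n}$ bits of partition size optimally across dimensions, exploiting the fact that a refinement in low dimension $j$ costs only $2^{j+1} \ll 2^{d+1}$. This is the direct multidimensional analogue of \citep[Theorem~12]{Frieze1999}, which I would verify by either a direct induction on the number of refinement steps or by applying Frieze--Kannan in each dimension separately with appropriately allocated error budgets and taking a common refinement.
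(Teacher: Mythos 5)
The paper never actually proves this theorem: it simply declares it a ``trivial extension'' of Frieze--Kannan Theorem~12 and moves on. Your iterative energy-increment refinement is exactly the Frieze--Kannan machinery, so the plan is sound and matches what the paper implicitly appeals to. You also correctly locate the difficulty: the naive accounting allows up to $1/\epsilon^2$ refinement steps in each of the $d$ dimensions, with a step in dimension $j$ costing $j+1\le d+1$ bits (multiplying the partition size by $2^{j+1}$), for a worst-case budget of $\sum_{j=1}^{d}(j+1)/\epsilon^2 = \Theta(d^2/\epsilon^2)$ bits and hence an error of order $d/\sqrt{\log_2 n}$, not $\sqrt{(d+1)/\log_2 n}$.

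Your proposed fix, however, does not obviously close that gap. Greedily refining in the dimension with the largest current violation does not reduce the number of refinements needed per dimension --- an adversarial complexon can still force $\Theta(1/\epsilon^2)$ refinements in every layer $j\le d$ --- and a dimension-$j$ refinement costs $j+1$ bits regardless of the order in which the dimensions are visited, so the worst-case budget is unchanged. Pre-allocating $b_j$ bits to layer $j$ and running Frieze--Kannan on each $[0,1]^{j+1}$ separately also needs $\sum_j b_j = \Theta(d^2/\epsilon^2)$, and in addition requires a lemma that passing to a common refinement cannot inflate any of the per-layer cut-norms, which would itself need justification. As sketched, your argument delivers $\Theta(d/\sqrt{\log_2 n})$ rather than the stated $\sqrt{(d+1)/\log_2 n}$, and the sharpening is asserted rather than proven; it is not clear to me that the paper's literal constant is achievable by this route. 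That said, the downstream uses in \cref{lemma:equipartitioning,lemma:sampling} already absorb a $2^{d}$ factor, so a bound of order $d/\sqrt{\log_2 n}$ would carry the rest of the paper with only a cosmetic change to constants --- but you should flag the discrepancy rather than claim to recover the stated bound.
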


Following \citep{Lovasz2012}, we strengthen \cref{thm:partition} to hold for equipartitions, where an equipartition of a measure space is a partition such that each set has equal measure.

\begin{lemma}[Equipartitioning]\label{lemma:equipartitioning}
  Let $W$ be a totally symmetric, measurable function.
  For some $n>1,d\geq 1$, put $m=\lceil n^{1/4}\rceil$.
  Then, there exists an $m$-equipartition $\partition{P}$ of $[0,1]$ such that for all finite, nonnegative sequences of real numbers $\alpha_1,\ldots,\alpha_d$,
  \begin{equation*}
    \labelcutd_{\cut}(W,W_{\partition{P}};(\alpha_j))\leq
    \frac{\sum_{j=1}^d\alpha_j\left(4\sqrt{d+1}+2^j\right)}{\sqrt{\log_2{n}}}.
  \end{equation*}
\end{lemma}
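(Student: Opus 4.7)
The plan is to invoke \cref{thm:partition} to produce a stepfunction approximation of $W$ on some partition of $[0,1]$, and then promote that partition to an $m$-equipartition while controlling the additional cut-distance. Since cut-norms are invariant under measure-preserving transformations of $[0,1]$, I will assume throughout that the $\partition{Q}$-parts supplied by \cref{thm:partition} are sub-intervals of $[0,1]$.

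First I would apply \cref{thm:partition} to obtain a partition $\partition{Q}=\{Q_1,\dots,Q_n\}$ and a stepfunction $U$ on $\partition{Q}$ with $\labelcutd_{\ncut{j}}(W,U)\leq\sqrt{(d+1)/\log_2 n}$ for every $1\leq j\leq d$. I would then construct the $m$-equipartition $\partition{P}=\{P_1,\dots,P_m\}$ by finely subdividing $[0,1]$ into very small intervals and regrouping them into $m$ classes of measure $1/m$, arranged so that each $P_k$ is a union of $\partition{Q}$-parts except for a small boundary correction of total measure $O(1/m)$ per coordinate.

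Next I would decompose via the triangle inequality
\begin{equation*}
  \labelcutd_{\ncut{j}}(W,W_{\partition{P}}) \leq \labelcutd_{\ncut{j}}(W,U) + \labelcutd_{\ncut{j}}(U,U_{\partition{P}}) + \labelcutd_{\ncut{j}}(U_{\partition{P}},W_{\partition{P}}).
\end{equation*}
The first term is $\sqrt{(d+1)/\log_2 n}$ directly. For the third term I would show that the projection $V\mapsto V_{\partition{P}}$ is a contraction in cut-norm: using the $\sup_{f_1,\dots,f_{j+1}:[0,1]\to[0,1]}$ characterization of $\|\cdot\|_{\ncut{j}}$ together with the observation that $\Ex[f_i\mid\partition{P}]$ remains $[0,1]$-valued, one deduces $\labelcutd_{\ncut{j}}(U_{\partition{P}},W_{\partition{P}})=\|(U-W)_{\partition{P}}\|_{\ncut{j}}\leq\|U-W\|_{\ncut{j}}\leq\sqrt{(d+1)/\log_2 n}$. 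The middle term isolates the equipartition mismatch: $U-U_{\partition{P}}$ vanishes on any $\partition{P}$-box whose every coordinate-slice lies entirely within a single $\partition{Q}$-part, so the non-vanishing contribution is confined to "dirty" boxes. A union bound over the $j+1$ coordinates, combined with the $O(1/m)$ one-dimensional boundary control from the construction and the estimate $1/m\leq 1/\sqrt{\log_2 n}$ (which holds whenever $m=\lceil n^{1/4}\rceil$), would deliver $\labelcutd_{\ncut{j}}(U,U_{\partition{P}})\leq 2^j/\sqrt{\log_2 n}$. Summing the three estimates, multiplying by $\alpha_j$, and summing over $j\in[d]$ yields the advertised bound.

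The technical heart of the argument will be the estimate for the middle term. Obtaining the exponential-in-$j$ factor $2^j$ (rather than a worse dependence like $(j+1)!$) demands careful accounting of cancellations across sub-boxes within each $\partition{P}$-box, exploiting the zero-integral identity $\int_{\prod_i P_{k_i}}(U-U_{\partition{P}})\,\drv{x}=0$ inherent to the projection: integration against arbitrary subsets $S_i\subseteq P_{k_i}$ recovers at most the partial boundary contribution, not the whole box. Engineering the construction of $\partition{P}$ so that the one-dimensional "dirty" measure is $O(1/m)$ will also require a greedy allocation of subdivided intervals to $\partition{P}$-classes, ensuring that only a bounded number of straddling pieces survive per class. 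Once these ingredients are in place, the three-term triangle inequality assembles into the claimed weighted bound with essentially the exponents stated.
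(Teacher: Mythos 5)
There is a genuine gap, and it lies exactly where you flag the ``technical heart'': the middle term $\labelcutd_{\ncut{j}}(U,U_{\partition{P}})$ cannot be bounded the way you claim. You invoke \cref{thm:partition} with $n$ classes, so $\partition{Q}=\{Q_1,\dots,Q_n\}$ is a \emph{fine} partition (typical class measure $\approx 1/n$), and then group $\partition{Q}$-parts into an $m$-equipartition $\partition{P}$ with only $m=\lceil n^{1/4}\rceil\ll n$ classes of measure $1/m\gg 1/n$. Your ``clean-box'' test --- every coordinate-slice $P_{k_i}$ lies inside a single $\partition{Q}$-part --- is therefore generically satisfied nowhere, since a set of measure $1/m$ cannot sit inside a set of measure $\approx 1/n$. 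So $U-U_{\partition{P}}$ does not localize to a small dirty region; instead, $U_{\partition{P}}$ is a wide average of the $\partition{Q}$-stepfunction $U$ inside each $\partition{P}$-box. The cut-norm then sees the full oscillation of $U$ inside each box: take $U$ to be a two-coloring of the cells of $\partition{Q}^{j+1}$ in checkerboard fashion, so $U_{\partition{P}}\approx\tfrac12$ everywhere while a single Borel rectangle aligned with the $\partition{Q}$-grid extracts $\Theta(1)$ mass from $U-U_{\partition{P}}$, giving $\|U-U_{\partition{P}}\|_{\ncut{j}}=\Theta(1)$. The zero-integral identity over a whole $\partition{P}$-box does nothing here, because the cut-norm is a supremum over sub-rectangles and those sub-rectangles are free to align with the oscillation. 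The contraction argument for the third term and the outer triangle inequality are both correct, but the middle term defeats the proposal.

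The paper's proof takes the opposite orientation. It invokes \cref{thm:partition} with only $m$ classes, giving a \emph{coarse} partition $\partition{Q}$ and stepfunction $U$ with $\labelcutd_{\ncut{j}}(W,U)\leq\sqrt{(d+1)/\log_2 m}\leq 2\sqrt{(d+1)/\log_2 n}$. It then builds the $m$-equipartition $\partition{P}$ so that $\partition{P}$ (nearly) \emph{refines} $\partition{Q}$: each $Q_j$ is chopped into equal-size pieces, and at most $m$ exceptional leftovers of total measure $O(m/n)$ are swept into extra classes. The common refinement $\partition{R}=\partition{P}\wedge\partition{Q}$ then inherits the approximation quality up to a factor of $2$ (the analogue of Lov\'{a}sz's refinement lemma), producing the $4\sqrt{d+1}$ term, while $W_{\partition{P}}$ and $W_{\partition{R}}$ differ only on the measure-$O(2^j m/n)$ exceptional set, producing the $2^j$ term. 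In short: you want an equipartition that refines the weak-regularity partition, not one that coarsens it; once the refinement direction is corrected, the error decomposition genuinely localizes and the bound closes.
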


\begin{proof} 
  Let $\partition{Q}=\{Q_j\}_{j=1}^m$ be an $m$-partition as guaranteed by \cref{thm:partition}, with corresponding stepfunction $U$.
  Partition each class $Q_j$ into sets of measure $1/n$, with at most one exceptional set of measure less than $1/n$ for each class.
  Notice that the measure of the union of the exceptional sets is bounded by $m/n$.
  Take the union of the exceptional sets, and partition it into sets of measure $1/n$, yielding an equipartition $\partition{P}$.

  Take the common refinement $\partition{R}=\partition{P}\wedge\partition{Q}$.
  Note that $\partition{P}$ and $\partition{R}$ only differ over those classes in $\partition{P}$ formed from exceptional sets.
  Thus, for all $j\geq 1$, the restrictions of $W_{\partition{P}}$ and $W_{\partition{R}}$ to $[0,1]^{j+1}$ only differ on a set of measure at most $2^j(m/n)$.
  This implies that
  \begin{equation*}
    \labelcutd_{\ncut{j}}(W_{\partition{R}},W_{\partition{P}})\leq
    2^j\frac{m}{n}.
  \end{equation*}
  Via an easily shown variation of \citep[Lemma~9.12]{Lovasz2012}, we see that
  \begin{equation*}
    \labelcutd_{\ncut{j}}(W,W_{\partition{R}})\leq
    2\labelcutd_{\ncut{j}}(W,U),
  \end{equation*}
  which implies via the triangle inequality that
  \begin{equation*}
    \labelcutd_{\ncut{j}}(W,W_{\partition{P}})\leq
    2\labelcutd_{\ncut{j}}(W,U)+2^j\frac{m}{n}.
  \end{equation*}
  One can then show via straightforward calculation that
  \begin{equation*}
    \labelcutd_{\ncut{j}}(W,W_{\partition{P}})\leq
    \frac{4\sqrt{d+1}+2^j}{\sqrt{\log_2{n}}}.
  \end{equation*}
  This yields the bound
  \begin{equation*}
    \labelcutd(W,W_{\partition{P}};(\alpha_j))=
    \sum_{j=1}^d\alpha_j\labelcutd_{\ncut{j}}(W,W_{\partition{P}})\leq
    \frac{\sum_{j=1}^d\alpha_j\left(4\sqrt{d+1}+2^j\right)}{\log_2{n}},
  \end{equation*}
  as desired.
\end{proof}

\subsection{Sample Concentration}

We show that ``nice'' parameters of simplicial complexes sampled from a complexon concentrate around their expectation.

\begin{definition}[Reasonably Smooth Parameter]\label{defn:smooth}
  Let $f$ be a function mapping simplicial complexes to the real numbers.
  Suppose that for any simplicial complexes $F,G$ such that $\Vert(F)=\Vert(G)$ and whose structure varies only on faces incident to a single node, \ie, there exists some $v\in\Vert(F)$ such that for all $\sigma\in F$,
  \begin{equation*}
    \sigma\in F\triangle G \Rightarrow v\in\sigma,
  \end{equation*}
  we have
  \begin{equation*}
    |f(F)-f(G)|\leq 1.
  \end{equation*}
  Under these conditions, we say that $f$ is a reasonably smooth parameter.%
  \footnote{$F\triangle G$ denotes the symmetric difference between sets.}
\end{definition}

We will need the following result \citep[Corollary~A.15]{Lovasz2012}, which is a corollary of Azuma's inequality.
\begin{proposition}\label{prop:iid-concentration}
  Let $(\Omega,\mathcal{A},\pi)$ be a probability space.
  Let $f:\Omega^n\to\reals$ be a measurable function such that
  \begin{equation*}
    |f(x)-f(y)|\leq 1
  \end{equation*}
  whenever $x$ and $y$ differ in one coordinate only.
  Let $X$ be a random point in $\Omega^n$ distributed according to the product measure.
  Then, for any $\epsilon>0$,
  \begin{equation*}
    \Pr\left\{f(X)\geq\Ex[f(X)]+\epsilon n\right\}
    \leq \exp\left(-\epsilon^2 n/2\right).
  \end{equation*}
\end{proposition}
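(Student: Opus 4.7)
The plan is to prove this via a standard Doob martingale coupled with Azuma's inequality, which is the canonical route to McDiarmid/bounded-differences concentration. Since the target bound $\exp(-\epsilon^2 n/2)$ matches Azuma's one-sided tail with each $c_k=1$, the entire task reduces to constructing a martingale whose increments are almost surely bounded by $1$.

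Concretely, let $X = (X_1,\ldots,X_n)$ be the product sample, and define the filtration $\mathcal{F}_k = \sigma(X_1,\ldots,X_k)$ for $k=0,1,\ldots,n$. Form the Doob martingale $M_k = \Ex[f(X)\mid \mathcal{F}_k]$, so that $M_0 = \Ex[f(X)]$ and $M_n = f(X)$. The first step is to exploit independence to write $M_k$ as a deterministic function: set
\begin{equation*}
  g_k(x_1,\ldots,x_k) = \Ex[f(x_1,\ldots,x_k,X_{k+1},\ldots,X_n)],
\end{equation*}
so that $M_k = g_k(X_1,\ldots,X_k)$ and, by pushing the conditional expectation through an independent copy $X_k'$ of $X_k$, $M_{k-1} = \Ex_{X_k'}[g_k(X_1,\ldots,X_{k-1},X_k')]$. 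Hence the increment can be written as
\begin{equation*}
  D_k = M_k - M_{k-1} = \Ex_{X_k'}\bigl[g_k(X_1,\ldots,X_k) - g_k(X_1,\ldots,X_{k-1},X_k')\bigr].
\end{equation*}

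The second step is the key use of the hypothesis: for any fixed values $x_1,\ldots,x_{k-1},x_k,x_k'$ the two tuples $(x_1,\ldots,x_k,X_{k+1},\ldots,X_n)$ and $(x_1,\ldots,x_{k-1},x_k',X_{k+1},\ldots,X_n)$ differ in exactly one coordinate, so by the one-coordinate Lipschitz assumption the pointwise difference of $f$ is bounded by $1$. Averaging preserves this bound, yielding $|g_k(x_1,\ldots,x_k) - g_k(x_1,\ldots,x_{k-1},x_k')|\leq 1$, and therefore $|D_k|\leq 1$ almost surely.

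Third, apply Azuma--Hoeffding to the martingale $M_k - M_0$ with squared-increment sum at most $n$: for every $t>0$,
\begin{equation*}
  \Pr\{M_n - M_0 \geq t\} \leq \exp\!\left(-\frac{t^2}{2n}\right).
\end{equation*}
Substituting $t = \epsilon n$ gives the claimed bound $\exp(-\epsilon^2 n/2)$. The only mildly subtle point I expect is the bounded-differences verification for $D_k$: one must be careful to exchange the order of integration and absolute value correctly and to cite independence when writing $M_{k-1}$ as an expectation over a fresh copy $X_k'$. Everything else is mechanical once Azuma's inequality is invoked as a black box.
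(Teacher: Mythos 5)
Your proof is correct and follows exactly the route the paper indicates: the paper does not prove this proposition itself but cites it as \citep[Corollary~A.15]{Lovasz2012} and notes it is a corollary of Azuma's inequality, which is precisely the Doob-martingale-plus-Azuma argument you carry out. The bounded-differences verification for $D_k$ is handled correctly via independence and averaging over the tail variables, and the substitution $t=\epsilon n$ into Azuma's one-sided tail recovers the stated bound.
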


\Cref{prop:iid-concentration} gives us a generic, one-sided tail bound for well-behaved functions on sequences of $\iid$ random variables.
We now construct the random simplicial complex $\sample{K}(n,W)$ in a way that fits this schema.

Let $W$ be a complexon, and let $n\geq 1$ be given.
Let $\Omega=\prod_{d=0}^{n-1}[0,1]^{([n]^d)}$ with the uniform probability measure.
Take \iid{} samples $x_j=(\alpha_{j,0},\ldots,\alpha_{j,n-1})$ from $\Omega$ for $j\in[n]$.
We construct a simplicial complex $F$ from the points $\{x_j\}_{j=1}^n$ in the following way.

For all $\sigma\subseteq[n]$, put $\sigma\in F$ if and only if the following condition holds.
For any $\sigma'\subseteq\sigma$, denote the ordered elements of $\sigma'$ by $1\leq i_1\leq\ldots\leq i_{d+1}\leq n$.
Then, we require for all such $\sigma'$
\begin{equation*}
  \alpha_{i_1,d}(i_2,\ldots,i_{d+1}) \leq
  W(\alpha_{i_1,0},\ldots,\alpha_{i_{d+1},0}).
\end{equation*}

Under this model, a random simplicial complex $F$ distributed according to $\sample{K}(n,W)$ is taken as a function of the \iid{} sequence $x_j$ and the complexon $W$.
Moreover, changing any coordinate $x_j$ in the sequence only affects simplices that contain $j$.
We can now state the main result of this section.

\begin{theorem}[Sample Concentration]\label{thm:sample-concentration}
  Let $f$ be a reasonably smooth parameter.
  Then, for a complexon $W$, an integer $n\geq 1$, and real number $t\geq 0$, we have
  \begin{equation*}
    \Pr\left\{f(\sample{K}(n,W))\geq\Ex[f(\sample{K}(n,W))]+\sqrt{2tn}\right\}\leq
    \exp\left(-t\right).
  \end{equation*}
\end{theorem}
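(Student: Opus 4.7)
The plan is to realize $f(\sample{K}(n,W))$ as a function of $n$ i.i.d. random variables to which \cref{prop:iid-concentration} applies directly. The construction immediately preceding the theorem statement already sets this up: write $\Omega=\prod_{d=0}^{n-1}[0,1]^{([n]^d)}$ equipped with the product uniform measure, draw $x_1,\ldots,x_n$ i.i.d.\ from $\Omega$, and form the random simplicial complex $F=F(x_1,\ldots,x_n)$ by including each $\sigma\subseteq[n]$ exactly when the threshold conditions stated in the setup hold for every $\sigma'\subseteq\sigma$. By \cref{lemma:ind-sample} (or just unwinding the thresholding construction), $F$ is distributed as $\sample{K}(n,W)$, so in particular $g(x_1,\ldots,x_n) := f(F(x_1,\ldots,x_n))$ is a measurable function on $\Omega^n$ with the same distribution as $f(\sample{K}(n,W))$.

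The first step is to verify the one-coordinate Lipschitz condition required by \cref{prop:iid-concentration}. Fix $j\in[n]$ and compare $g$ evaluated at two points differing only in the $j$-th coordinate, yielding simplicial complexes $F$ and $F'$ on $\Vert(F)=\Vert(F')=[n]$. By inspection of the construction, whether a simplex $\sigma\subseteq[n]$ with $j\notin\sigma$ belongs to $F$ depends only on the coordinates $\{x_i:i\in\sigma\}$ (together with the requirement that all subsets of $\sigma$ are present, which recursively also involves only those coordinates). Hence every $\sigma\in F\triangle F'$ must satisfy $j\in\sigma$. This is precisely the hypothesis in \cref{defn:smooth}, so reasonable smoothness of $f$ gives
\begin{equation*}
  |g(x_1,\ldots,x_n)-g(x_1,\ldots,x_{j-1},x_j',x_{j+1},\ldots,x_n)|\leq 1.
\end{equation*}

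The second step is a direct invocation of \cref{prop:iid-concentration} with the parameter choice $\epsilon=\sqrt{2t/n}$. This yields
\begin{equation*}
  \Pr\left\{g(X)\geq\Ex[g(X)]+\epsilon n\right\}\leq\exp(-\epsilon^2 n/2)=\exp(-t),
\end{equation*}
and $\epsilon n=\sqrt{2tn}$ is exactly the deviation appearing in the statement. Substituting the distributional identity $g(X)\overset{D}{=}f(\sample{K}(n,W))$ completes the argument.

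There is no real obstacle here; the only care is in checking that the sampling construction truly has the one-coordinate-affects-only-incident-simplices property, which in turn depends on the complexon values being functions of the coordinates of the relevant nodes and on the recursive thresholding respecting inclusion. Once that is observed, the conclusion is a one-line appeal to the bounded-differences concentration bound of \cref{prop:iid-concentration}.
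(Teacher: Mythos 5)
Your argument is exactly the paper's (the paper's own proof is a one-liner citing \cref{prop:iid-concentration}, and you have filled in precisely the verification it leaves implicit: the construction on $\Omega^n$, the one-coordinate change affecting only simplices incident to that node, the application of \cref{defn:smooth}, and the parameter choice $\epsilon=\sqrt{2t/n}$). The only cosmetic slip is the appeal to \cref{lemma:ind-sample}, which states a different fact; the identification of the thresholded $F$ with $\sample{K}(n,W)$ is simply by construction, as you note in your parenthetical.
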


\begin{proof}
  Follows from \cref{prop:iid-concentration}, treating $f(\sample{K}(n,W))$ as the composition of functions of the sequence of \iid{} random variables $\{x_j\in\Omega\}_{j=1}^n$.
\end{proof}

\subsection{Concentration of Norm}

The following lemma will be the most challenging part of proving \cref{lemma:sampling}.

\begin{lemma}[Norm Concentration]\label{lemma:norm-concentrate}
  Let $U:[0,1]^{d+1}\to[-1,1]$, and let $X$ be a random ordered $n$-subset of $[0,1]$.
  Let $U[X]$ be the matrix in $[-1,1]^{n^{d+1}}$ determined by evaluating $U$ on $X^{d+1}$.
  Then with probability at least $1-4\exp(-n^{1/(d+1)}/2)$,
  \begin{equation*}
    -3r(n,d+1) \leq \|U[X]\|_{\ncut{d}} - \|U\|_{\ncut{d}} \leq \frac{5(d+1)^2}{\sqrt{n^{1/(d+1)}}},
  \end{equation*}
  where
  \begin{equation*}
    r(n,d+1) = 1-\frac{P(n,d+1)}{n^{d+1}}.
  \end{equation*}
\end{lemma}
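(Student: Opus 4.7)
The proof splits into the lower and upper bounds, which require quite different techniques.

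\textbf{For the lower bound}, I would evaluate the cut-norm functional at a specific discrete family obtained by pulling back the optimal continuous sets. Choose $S_1^*, \ldots, S_{d+1}^*$ nearly optimal for $\|U\|_{\ncut{d}}$ and set $T_j = \{i \in [n] : X_i \in S_j^*\}$, giving the lower estimate $\|U[X]\|_{\ncut{d}} \geq (1/n^{d+1})|\sum_{\mathbf{i} \in T_1 \times \cdots \times T_{d+1}} U(X_{\mathbf{i}})|$. Split the sum by whether the entries of $\mathbf{i}$ are all distinct: the $P(n,d+1)$ distinct tuples have expected contribution $P(n,d+1)\int_{\prod_j S_j^*} U$ by independence of the $X_i$'s, whereas the remaining $n^{d+1}r(n,d+1)$ nondistinct tuples each have magnitude at most $1$. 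This pushes the mean of the pulled-back cut-value down by at most $2r(n,d+1)$ from $\|U\|_{\ncut{d}}$; an Azuma--Hoeffding step on the bounded-differences functional (coordinate sensitivity $2(d+1)/n$) absorbs the remaining fluctuation into a third factor of $r$ at the target probability scale.

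\textbf{For the upper bound}, the difficulty is that $\|U[X]\|_{\ncut{d}}$ is a supremum over $(2^n)^{d+1}$ discrete set families, too large for a naive union bound. The plan is to first invoke the Equipartitioning Lemma (\cref{lemma:equipartitioning}) to approximate $U$ by a stepfunction $U'$ on an $m$-equipartition of $[0,1]$ with $\|U - U'\|_{\ncut{d}}$ small, then split $\|U[X]\|_{\ncut{d}} \leq \|U'[X]\|_{\ncut{d}} + \|(U-U')[X]\|_{\ncut{d}}$. For the stepfunction piece, observe that $U'[X]_{\mathbf{i}}$ depends on $\mathbf{i}$ only through the partition classes $\pi(X_{i_j}) \in [m]$: writing $p_k$ for the empirical class mass, the cut-norm reduces to a maximum over $I_j \subseteq [m]$ of $|\sum_{k_j \in I_j} U'_{\mathbf{k}} \prod_j p_{k_j}|$, with only $2^{m(d+1)}$ candidates. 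A telescoping comparison with the continuous analogue (replacing $p_k$ by $1/m$), combined with multinomial concentration of the class counts, bounds the discrete-continuous gap for $U'$ by $O((d+1)m\sqrt{(\log m)/n})$, while the residual $\|(U-U')[X]\|_{\ncut{d}}$ is controlled by applying the same expectation-plus-Azuma estimate to $U - U'$.

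\textbf{The principal technical obstacle} is tuning the partition size $m$ to balance three competing error sources: the stepfunction approximation $O(\sqrt{(d+1)/\log m})$ from the Equipartitioning Lemma (decreasing in $m$), the union-bound cost $m(d+1)\log 2$ (increasing in $m$), and the sample-concentration error $O(m\sqrt{(\log m)/n})$ (also increasing in $m$). Choosing $m$ as an appropriate root of $n^{1/(d+1)}$ and combining the three estimates through the triangle inequality then yields the stated bound $5(d+1)^2/\sqrt{n^{1/(d+1)}}$, at a simultaneous failure probability of at most $4\exp(-n^{1/(d+1)}/2)$.
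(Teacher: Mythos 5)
Your lower-bound argument matches the paper's: pull back near-optimal continuous cut sets through $X$, account for the $n^{d+1}-P(n,d+1)$ tuples with repeated indices as the source of the $r(n,d+1)$ error, and then apply an Azuma/bounded-differences step (the paper's \cref{prop:iid-concentration}) to absorb the fluctuation. That part is sound.

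The upper bound, however, has a genuine gap. You correctly identify that the difficulty is the $2^{n(d+1)}$-fold supremum, and your plan — replace $U$ by a stepfunction $U'$ on an $m$-equipartition, bound the stepfunction contribution by a union bound over $2^{m(d+1)}$ class-respecting cut sets, and control the rest — hinges on bounding the residual term $\|(U-U')[X]\|_{\ncut{d}}$. But that residual term is precisely the hard direction in disguise. The Equipartitioning Lemma only controls $\|U-U'\|_{\ncut{d}}$, not $\|U-U'\|_1$; in general a weak-regularity stepfunction is $L^1$-far from $U$ even when it is cut-norm close (think of a $\pm 1$-valued $U$ with no step structure). Consequently, showing that the \emph{sampled} cut-norm $\|(U-U')[X]\|_{\ncut{d}}$ is not much larger than the \emph{continuous} cut-norm $\|U-U'\|_{\ncut{d}}$ requires the very upper bound you are trying to prove, now applied to $V=U-U'$ — the ``expectation-plus-Azuma'' step you invoke only gives the one-sided lower bound (sampled norm $\gtrsim$ continuous norm), not an upper bound, because you cannot union-bound over all $2^{n(d+1)}$ discrete cut sets with Azuma alone. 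The paper sidesteps this circularity entirely: rather than partitioning the domain, it discretizes the \emph{search space of cut sets} via the Alon--Fernandez~de~la~Vega--Kannan--Karpinski random subsampling trick (cited from \citet{Alon2003} and restated as \cref{lemma:random-cut-bounded,lemma:random-cut-multi}), which replaces the maximum over all $S_j\subseteq[n]$ by a maximum over $R_j\subseteq Q_j$ for small random witness sets $Q_j$ of size $q\approx n^{1/(d+1)}/(2(d+1))$. This reduces the union bound to $2^{q(d+1)}$ events and makes a single Azuma inequality (with the conditioning on $Q_j$ and on $\{X_i\}_{i\in Q}$, comparing against the continuous sets $Y_j$ of signed partial sums) suffice. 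The paper also proves the result for one-sided norms $\|\cdot\|_{\ncut{d}}^+$ and then applies it to $\pm U$, which is a minor stylistic difference from your two-sided framing.
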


We begin with the following result, which is a special case of \citep[Lemma~3]{Alon2003}.

\begin{lemma}\label{lemma:random-cut-bounded}
  Let $B:[n]^{d+1}\to[-1,1]$.
  Let $S_1,\ldots,S_{d+1}\subseteq [n]$, and for each $j\in[d+1]$, let $Q_j$ be a random subset of $[n]^{[d+1]\setminus\{j\}}$ with cardinality $q$.
  Then,
  \begin{equation*}
    B(S_1,\ldots,S_{d+1})\leq
    \Ex_{Q_j}\left[B\left(S_1,\ldots,S_{j-1},(Q_j\cap \prod_{i\neq j}S_i)^+,S_{j+1},\ldots,S_{d+1}\right)\right]+
    \frac{n^{d+1}}{\sqrt{q}},
  \end{equation*}
  where
  \begin{equation*}
    B(S_1,\ldots,S_{d+1}) = \sum_{v_1\in S_1}\cdots\sum_{v_{d+1}\in S_{d+1}}B(v_1,\ldots,v_{d+1}),
  \end{equation*}
  and $(Q_j\cap \prod_{i\neq j}S_i)^+$ is the collection of elements $v\in [n]$ such that,
  \begin{equation*}
    \sum_{\vect{q}\in Q_j\cap\prod_{i\neq j}S_i}B(q_1,\ldots,q_{j-1},v,q_{j+1},\ldots,q_{d+1}) > 0.
  \end{equation*}
\end{lemma}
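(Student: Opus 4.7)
The plan is to reduce the lemma to a one-dimensional statement about $S_j$ in which the sampled set $T:=(Q_j\cap\prod_{i\neq j}S_i)^+$ appears as an empirical maximizer, and then to absorb the discrepancy between empirical and true slice sums into the error term $n^{d+1}/\sqrt{q}$. First I would fix $j$ and, for each $v\in[n]$, introduce the slice sum
\[R_v=\sum_{\vect{u}\in\prod_{i\neq j}S_i}B(u_1,\ldots,u_{j-1},v,u_{j+1},\ldots,u_{d+1}),\]
so that $B(S_1,\ldots,S_{d+1})=\sum_{v\in S_j}R_v$, together with its empirical counterpart
\[\hat R_v=\frac{n^d}{q}\sum_{\vect{q}\in Q_j\cap\prod_{i\neq j}S_i}B(q_1,\ldots,q_{j-1},v,q_{j+1},\ldots,q_{d+1}).\]
Because each $\vect{q}\in[n]^d$ lies in $Q_j$ with probability $q/n^d$, linearity gives $\Ex_{Q_j}[\hat R_v]=R_v$. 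The key observation is that by the definition of $(\cdot)^+$, the set $T$ equals $\{v\in[n]:\hat R_v>0\}$ and is therefore the random maximizer of $S\mapsto\sum_v\hat R_v\indic_S(v)$ over subsets $S\subseteq[n]$.

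Next I would carry out the main comparison. Since $T$ maximizes the empirical functional, $\sum_v\hat R_v\indic_T(v)\geq\sum_v\hat R_v\indic_{S_j}(v)$ pointwise in $Q_j$; taking expectations,
\[B(S_1,\ldots,S_{d+1})=\Ex\Big[\sum_v\hat R_v\indic_{S_j}(v)\Big]\leq\Ex\Big[\sum_v\hat R_v\indic_T(v)\Big].\]
Splitting $\hat R_v=R_v+(\hat R_v-R_v)$ on the right-hand side and recognizing $\sum_v R_v\indic_T(v)=B(S_1,\ldots,S_{j-1},T,S_{j+1},\ldots,S_{d+1})$ reduces the lemma to showing
\[\mathcal{E}:=\Ex\Big[\sum_v(\hat R_v-R_v)\indic_T(v)\Big]\leq\frac{n^{d+1}}{\sqrt{q}}.\]

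The last step is the concentration estimate. Since $\indic_T\leq 1$, Jensen's inequality gives $\mathcal{E}\leq\sum_{v\in[n]}\Ex|\hat R_v-R_v|\leq\sum_{v\in[n]}\sqrt{\mathrm{Var}(\hat R_v)}$. The random variable $\hat R_v$ is, up to the scaling factor $n^d/q$, a sum of the bounded quantities $Y_{\vect{q}}=B(q_1,\ldots,v,\ldots,q_{d+1})\indic\{\vect{q}\in\prod_{i\neq j}S_i\}\in[-1,1]$ over the uniformly random size-$q$ subset $Q_j$ of $[n]^d$. A standard second-moment computation for uniform subsets (the inclusion indicators are pairwise negatively correlated) yields $\mathrm{Var}\big(\sum_{\vect{q}\in Q_j}Y_{\vect{q}}\big)\leq q$, whence $\mathrm{Var}(\hat R_v)\leq n^{2d}/q$ and $\Ex|\hat R_v-R_v|\leq n^d/\sqrt{q}$. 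Summing over the $n$ choices of $v$ yields the required bound on $\mathcal{E}$.

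The main obstacle, I expect, is conceptual rather than computational: one has to notice that the $(\cdot)^+$ construction is \emph{exactly} the empirical argmax over subsets $S\subseteq[n]$, so that the pointwise inequality $\sum_v\hat R_v\indic_T(v)\geq\sum_v\hat R_v\indic_{S_j}(v)$ is essentially free. Once this is recognized, the remainder reduces to a one-dimensional variance bound, with the only subtlety being the negative-correlation comparison needed to pass from with-replacement to without-replacement sampling.
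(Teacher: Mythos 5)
Your proof is correct, and you should know that the paper does not actually prove this lemma at all: it simply cites it as a special case of Lemma~3 of Alon, Fern\'{a}ndez de la Vega, Kannan, and Karpinski (2003). So there is no ``paper proof'' to compare against, only that external reference. Your argument is the standard one for this decomposition step, and it works: writing $B(S_1,\ldots,S_{d+1})=\sum_{v\in S_j}R_v$, introducing the unbiased empirical slice sums $\hat R_v$, recognizing $T=(Q_j\cap\prod_{i\neq j}S_i)^+$ as the (pointwise) maximizer of $S\mapsto\sum_{v\in S}\hat R_v$, and then controlling $\Ex\bigl[\sum_v(\hat R_v-R_v)\indic_T(v)\bigr]$ via $\sum_v\Ex|\hat R_v-R_v|\leq\sum_v\sqrt{\mathrm{Var}(\hat R_v)}$.

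One small imprecision worth flagging: you justify $\mathrm{Var}\bigl(\sum_{\vect{q}\in Q_j}Y_{\vect{q}}\bigr)\leq q$ by noting that the inclusion indicators $I_{\vect{q}}=\indic\{\vect{q}\in Q_j\}$ are pairwise negatively correlated. That observation alone does not suffice, because the cross-covariance contributions $Y_{\vect{q}}Y_{\vect{q}'}\mathrm{Cov}(I_{\vect{q}},I_{\vect{q}'})$ can be positive when $Y_{\vect{q}}Y_{\vect{q}'}<0$. The clean justification is either Hoeffding's reduction-of-variance theorem for sampling without versus with replacement, or (more elementarily) the exact identity
\begin{equation*}
  \mathrm{Var}\Bigl(\sum_{\vect{q}}Y_{\vect{q}}I_{\vect{q}}\Bigr)
  = \frac{q(n^d-q)}{n^d(n^d-1)}\sum_{\vect{q}}Y_{\vect{q}}^2
  \;+\; \frac{q(q-n^d)}{n^{2d}(n^d-1)}\Bigl(\sum_{\vect{q}}Y_{\vect{q}}\Bigr)^2,
\end{equation*}
where the second term is nonpositive and the first is at most $q$ since $\sum_{\vect{q}}Y_{\vect{q}}^2\leq n^d$. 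You gesture at this in your closing sentence (``the negative-correlation comparison needed to pass from with-replacement to without-replacement sampling''), so the gap is one of phrasing rather than understanding. With that caveat, the proof is complete and gives exactly the stated bound $n^{d+1}/\sqrt{q}$.
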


\Cref{lemma:random-cut-bounded} can be iterated over all dimensions of the matrix $B$ in order to yield an expectational bound on a variation of the cut-norm for a matrix.
We find it useful to consider the one-sided cut-norm, defined as follows.
\begin{definition}
  Let $B:[n]^{d+1}\to\reals$ and $U:[0,1]^{d+1}\to\reals$.
  Define
  \begin{align*}
    \|B\|_{\ncut{d}}^+ &= \frac{\max_{S_1,\ldots,S_{d+1}\subseteq[n]}B(S_1,\ldots,S_{d+1})}{n^{d+1}} \\
    \|U\|_{\ncut{d}}^+ &= \sup_{S_1,\ldots,S_{d+1}\in\Borel[0,1]}\int_{\prod_j S_j}U(x)\drv{x}.
    \end{align*}
\end{definition}

Observing that $\|B\|_{\ncut{d}}=\max\{\|B\|_{\ncut{d}}^+;\|-B\|_{\ncut{d}}^+\}$, our goal is to find bounds for $\|U[X]\|_{\ncut{d}}^+-\|U\|_{\ncut{d}}^+$ and apply them twice, yielding the desired result.

\begin{lemma}\label{lemma:random-cut-multi}
  Let $B:[n]^{d+1}\to[-1,1]$.
  Let $\{Q_j\}_{j=1}^{d+1}$ each be random subsets of $[n]^{[d+1]\setminus\{j\}}$ with cardinality $q$.
  Then,
  \begin{equation*}
    \|B\|_{\ncut{d}}^+ \leq
    \frac{1}{n^{d+1}}\Ex_{Q_1,\ldots,Q_{d+1}}\left[\max_{R_j\subseteq Q_j}B(R_1^+,\ldots,R_{d+1}^+)\right]+
    \frac{d+1}{\sqrt{q}}.
  \end{equation*}
\end{lemma}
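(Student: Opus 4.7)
The plan is to prove the bound by iterating \cref{lemma:random-cut-bounded} once for each of the $d+1$ coordinates, at the cost of an additive error of $n^{d+1}/\sqrt{q}$ per iteration. Starting from the definition
\begin{equation*}
\|B\|_{\ncut{d}}^+ = \frac{1}{n^{d+1}}\max_{S_1,\ldots,S_{d+1}\subseteq[n]}B(S_1,\ldots,S_{d+1}),
\end{equation*}
I would first apply \cref{lemma:random-cut-bounded} in the coordinate $j=1$ to get, for any fixed $S_2,\ldots,S_{d+1}$,
\begin{equation*}
B(S_1,\ldots,S_{d+1})\leq \Ex_{Q_1}\bigl[B((Q_1\cap\prod_{i\neq 1}S_i)^+,S_2,\ldots,S_{d+1})\bigr]+\frac{n^{d+1}}{\sqrt{q}}.
\end{equation*}
The key observation is that the random index set $(Q_1\cap\prod_{i\neq 1}S_i)^+$ is determined by $Q_1$ and some subset $R_1\subseteq Q_1$ (namely $R_1=Q_1\cap\prod_{i\neq 1}S_i$), so after taking the maximum over $S_1,\ldots,S_{d+1}$ inside the expectation, the result can be upper bounded by
\begin{equation*}
\Ex_{Q_1}\Bigl[\max_{R_1\subseteq Q_1,\,S_2,\ldots,S_{d+1}}B(R_1^+,S_2,\ldots,S_{d+1})\Bigr]+\frac{n^{d+1}}{\sqrt{q}}.
\end{equation*}

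Next, I would repeat this argument for coordinate $j=2$, applied to the inner expression with $R_1^+$ fixed and maximizing over $S_2,\ldots,S_{d+1}$. Because $R_1^+$ depends only on $Q_1$ and is independent of the freshly drawn $Q_2$, a second invocation of \cref{lemma:random-cut-bounded} gives
\begin{equation*}
\max_{S_2}B(R_1^+,S_2,\ldots,S_{d+1})\leq \Ex_{Q_2}\Bigl[\max_{R_2\subseteq Q_2}B(R_1^+,R_2^+,S_3,\ldots,S_{d+1})\Bigr]+\frac{n^{d+1}}{\sqrt{q}},
\end{equation*}
and pulling the expectation $\Ex_{Q_1}$ outside (using Fubini / independence of $Q_1,Q_2$) preserves the iterative structure. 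Repeating this for every coordinate $j=3,\ldots,d+1$ in turn produces an additive error of $n^{d+1}/\sqrt{q}$ at each of the $d+1$ steps, yielding
\begin{equation*}
\max_{S_1,\ldots,S_{d+1}}B(S_1,\ldots,S_{d+1}) \leq \Ex_{Q_1,\ldots,Q_{d+1}}\Bigl[\max_{R_j\subseteq Q_j}B(R_1^+,\ldots,R_{d+1}^+)\Bigr]+(d+1)\frac{n^{d+1}}{\sqrt{q}},
\end{equation*}
which after normalizing by $n^{d+1}$ is exactly the claimed bound.

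The main obstacle is bookkeeping rather than a genuinely hard step: I need to verify that after the $j$-th iteration, the set substituted into the $j$-th slot is indeed of the form $R_j^+$ for some $R_j\subseteq Q_j$ (so that the outer maximum collapses correctly), and that taking the maximum over the remaining $S_{j+1},\ldots,S_{d+1}$ inside the expectation $\Ex_{Q_j}$ is legitimate. Both follow because the $^+$ operation in \cref{lemma:random-cut-bounded} only references the previously-fixed arguments and because the $Q_j$ are independent, so applying Fubini allows us to absorb each new expectation into a single joint expectation over $(Q_1,\ldots,Q_{d+1})$ at the end. Once these points are verified, the bound drops out directly from $d+1$ successive applications of \cref{lemma:random-cut-bounded}.
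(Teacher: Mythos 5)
Your proposal is correct and follows essentially the same route as the paper: iterate \cref{lemma:random-cut-bounded} once per coordinate, absorbing each $(Q_j\cap\prod_{i\neq j}S_i)^+$ into a maximum over $R_j\subseteq Q_j$, accumulate one additive error of $n^{d+1}/\sqrt{q}$ per step, and finally normalize by $n^{d+1}$. The paper's proof is terser (it fixes $S_1,\ldots,S_{d+1}$, iterates, then takes the supremum, rather than taking the maximum first and pushing it through expectations), but the underlying argument is identical.
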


\begin{proof}
  Fix $S_1,\ldots,S_{d+1}\in[n]$.
  Applying \cref{lemma:random-cut-bounded} repeatedly over each coordinate $j\in[d+1]$ yields
  \begin{equation*}
    B(S_1,\ldots,S_{d+1}) \leq
    \Ex_{Q_1,\ldots,Q_{d+1}}\left[\max_{R_j\subseteq Q_j}B(R_1^+,\ldots,R_{d+1}^+)\right]+
    \frac{(d+1)n^{d+1}}{\sqrt{q}}.
  \end{equation*}
  Taking the supremum of the left-hand side over all possible sets $S_1,\ldots,S_{d+1}$ and normalizing by $n^{d+1}$ yields the desired bound.
\end{proof}

The following lemma immediately precedes our desired result.

\begin{lemma}\label{lemma:pre-first-sampling}
  Let $U:[0,1]^{d+1}\to[-1,1]$, and let $X$ be a random ordered $n$-subset of $[0,1]$.
  Then with probability at least $1-2\exp(-n^{1/(d+1)}/2)$,
  \begin{equation*}
    -3r(n,d+1) \leq \|U[X]\|_{\ncut{d}}^+ - \|U\|_{\ncut{d}}^+ \leq \frac{5(d+1)^2}{\sqrt{n^{1/(d+1)}}}.
  \end{equation*}
\end{lemma}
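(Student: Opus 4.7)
The plan is to prove the two inequalities separately. For the lower bound I pull back a near-optimal choice of Borel sets through the sample $X$; for the upper bound I apply \cref{lemma:random-cut-multi} to reduce to a union bound over sub-samples, then compare them to Borel sets in $[0,1]$.

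\emph{Lower bound.} Fix $\eta>0$ and choose Borel sets $S_1,\ldots,S_{d+1}\subseteq[0,1]$ with $\int_{\prod_j S_j}U\,\drv x\geq \|U\|_{\ncut{d}}^+-\eta$. Define $T_j=\{i\in[n]:X_i\in S_j\}$, so that trivially $\|U[X]\|_{\ncut{d}}^+\geq n^{-(d+1)}U[X](T_1,\ldots,T_{d+1})$. I split the right-hand sum into tuples $(i_1,\ldots,i_{d+1})$ with distinct coordinates and those with repetitions: the distinct part has expectation $P(n,d+1)\int_{\prod_j S_j}U$ by linearity, while the repeated part consists of at most $r(n,d+1)n^{d+1}$ summands each of magnitude at most $1$. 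This gives $\Ex\bigl[n^{-(d+1)}U[X](T_1,\ldots,T_{d+1})\bigr]\geq \|U\|_{\ncut{d}}^+-\eta-2r(n,d+1)$. A bounded-differences argument via \cref{prop:iid-concentration} concentrates this random variable about its expectation on a scale that is dominated by the stated upper-bound rate; letting $\eta\to 0$ and absorbing the concentration slack into the generous constant $3$ produces the lower bound.

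\emph{Upper bound.} I apply \cref{lemma:random-cut-multi} to $B=U[X]$ with $q=\lceil n^{1/(d+1)}\rceil$, obtaining
\[
\|U[X]\|_{\ncut{d}}^+\leq n^{-(d+1)}\Ex_{Q_1,\ldots,Q_{d+1}}\Bigl[\max_{R_j\subseteq Q_j}U[X](R_1^+,\ldots,R_{d+1}^+)\Bigr]+\tfrac{d+1}{\sqrt q}.
\]
The crucial observation is that, for fixed sketches $R_1,\ldots,R_{d+1}$, the threshold set $R_j^+$ equals $\{v\in[n]:X_v\in S_j(R_j)\}$, where $S_j(R_j)=\bigl\{x\in[0,1]:\sum_{\vec q\in R_j}U(X_{q_1},\ldots,x,\ldots,X_{q_{d+1}})>0\bigr\}$ is a Borel set depending only on the $X$-values at the $O((d+1)q)$ indices inside $\bigcup_j R_j$. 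Conditioning on those indices and repeating the distinct-versus-repeated decomposition from the lower bound yields, in expectation, $n^{-(d+1)}U[X](R_1^+,\ldots,R_{d+1}^+)\leq \|U\|_{\ncut{d}}^++O((d+1)q/n)$. Applying \cref{prop:iid-concentration} for each fixed choice of sketches and union-bounding over the $2^{(d+1)q}$ such choices promotes this to a bound on the inner maximum; a final bounded-differences argument applied to $\|U[X]\|_{\ncut{d}}^+$ itself upgrades the expectational bound to a high-probability one.

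\emph{Main obstacle.} The upper bound is the delicate step. The sub-sample size $q=\lceil n^{1/(d+1)}\rceil$ must be chosen so that the sketch-sampling error $(d+1)/\sqrt q$, the union-bound cost $2^{(d+1)q}$ over sketch choices, and the Azuma exponent arising from the bounded-differences constant $O((d+1)/n)$ all balance against the target failure probability $\exp(-n^{1/(d+1)}/2)$. The stated upper-bound rate $5(d+1)^2/\sqrt{n^{1/(d+1)}}$ then emerges from this balance, with the constant $5$ absorbing both the expectational gap $O((d+1)q/n)$ and the final Azuma slack.
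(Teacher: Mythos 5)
Your strategy mirrors the paper's closely: for the lower bound, pull near-optimal Borel sets back through the sample, split the sum into distinct and repeated tuples, and apply Azuma-type concentration via \cref{prop:iid-concentration}; for the upper bound, invoke \cref{lemma:random-cut-multi}, condition on the sketch indices to reduce the threshold sets to Borel sets, union-bound over the $2^{(d+1)q}$ sketch choices, and balance $q$. There is, however, a genuine quantitative gap in your lower-bound step, and it is not one you can wave away by invoking the ``generous constant'' $3$. Changing a single $X_i$ alters at most $(d+1)n^{d}$ of the $n^{d+1}$ entries of $U[X]$, so the map $X\mapsto\|U[X]\|_{\ncut{d}}^+$ has bounded differences of order $(d+1)/n$; \cref{prop:iid-concentration} therefore yields fluctuations of order $(d+1)\sqrt{t/n}$ at confidence $1-\exp(-t)$. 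To reach the target failure probability $\exp\bigl(-n^{1/(d+1)}/2\bigr)$ you are forced to take $t\asymp n^{1/(d+1)}$, which gives fluctuations $\asymp(d+1)\,n^{-d/(2(d+1))}$. This is polynomially larger than $r(n,d+1)=1-P(n,d+1)/n^{d+1}=O\bigl((d+1)^2/n\bigr)$ for every $d\geq 1$, so no constant multiplying $r(n,d+1)$ can absorb it: the asserted inequality $-3r(n,d+1)\leq\|U[X]\|_{\ncut{d}}^+-\|U\|_{\ncut{d}}^+$ is simply not what Azuma delivers at that confidence. (The paper's own proof of the lower bound has the same tension: it applies \cref{prop:iid-concentration} at deviation $r(n,d+1)$ and reports failure probability $\exp\bigl(-r^2(n,d+1)/2\bigr)$, which does not decay in $n$ and is far above the required $\exp\bigl(-n^{1/(d+1)}/2\bigr)$. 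A correct statement would enlarge the lower-side error to the Azuma scale, e.g.\ to something of order $(d+1)n^{-d/(2(d+1))}$ or to the same rate as the upper bound.)

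A smaller point on the upper bound: you take $q=\lceil n^{1/(d+1)}\rceil$, whereas the paper takes $q=\lfloor n^{1/(d+1)}\rfloor/(2(d+1))$ precisely so that the spurious-index contribution $(2(d+1)q)^d/n$ collapses to at most $n^{-1/(d+1)}$. With your $q$ this term carries an extra factor $(2(d+1))^d$, which does not break the argument asymptotically but also does not land on the advertised $5(d+1)^2$; you should carry out the balancing explicitly rather than assert that the constant $5$ absorbs it.
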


\begin{proof}
  For a random $n$-subset $X\subseteq[0,1]$, let $B=U[X]$.

  \medskip
  \noindent\textit{(Lower bound)}
  For any measurable subsets $S_1,\ldots,S_{d+1}\subseteq[0,1]$ we have
  \begin{equation*}
    \|B\|_{\ncut{d}}^+\geq\frac{1}{n^{d+1}}U(S_1\cap X, \ldots, S_{d+1}\cap X),
  \end{equation*}
  where $U(S_1\cap X,\ldots,S_{d+1}\cap X)$ is used as shorthand for
  \begin{equation*}
    U(S_1\cap X, \ldots, S_{d+1}\cap X) =
    \sum_{x_1\in S_1\cap X} \cdots \sum_{x_{d+1}\in S_{d+1}\cap X}U(x_1,\ldots,x_{d+1}).
  \end{equation*}
  On average, we see that
  \begin{align*}
    \Ex_X\left[\|B\|_{\ncut{d}}^+\right] &\geq \frac{1}{n^{d+1}}\Ex_X\left[U(S_1\cap X,\ldots,S_{d+1}\cap X)\right] \\
                                               &= \frac{P(n,d+1)}{n^{d+1}}\int_{\prod_j S_j}U(x)\drv{x} + r(n,d+1)\cdot g(U;S_1,\ldots,S_{d+1}),
  \end{align*}
  where $g$ is some function taking values with magnitude at most $1$.
  Thus, since the sets $S_1,\ldots,S_{d+1}$ were given arbitrarily, we can choose them to yield the following inequality.
  \begin{align*}
    \Ex_X\left[\|B\|_{\ncut{d}}^+\right]&\geq\int_{\prod_j S_j}U(x)\drv{x} - 2r(n,d+1) \\
                                              &\geq \|U\|_{\ncut{d}}^+ - 2r(n,d+1).
  \end{align*}
  Applying \cref{prop:iid-concentration} yields
  \begin{equation*}
    \|B\|_{\ncut{d}}^+-\|U\|_{\ncut{d}}^+\geq -3r(n,d+1)
  \end{equation*}
  with probability at least $1-\exp(-r^2(n,d+1)/2)$.

  \medskip
  \noindent\textit{(Upper bound)}
  Let $Q_1,\ldots,Q_{d+1}$ be, for each $j$, random $q$-subsets of $[n]^{[d+1]\setminus\{j\}}$.
  We will bound the expectation of $\|B\|_{\ncut{d}}^+$ by first fixing some parameters.
  Fix the sets $R_j\subseteq Q_j\subseteq[n]^d$, and define $Q=\bigcup_j \{Q_{j,1},\ldots,Q_{j,d}\}$, so that $Q\subseteq [n]$.
  Fix those values of $X_i$ for which $i\in Q$.
  Define for each $j\in[d+1]$ the set
  \begin{equation*}
    Y_j = \left\{y\in[0,1]:\sum_{(i_1,\ldots,i_d)\in R_j}U(X_{i_1},\ldots,X_{i_{j-1}},y,X_{i_j},\ldots,X_{i_d})>0\right\}.
  \end{equation*}
  Let $X'=\{X_i:i\in[n]\setminus Q\}$.
  For every collection of indices $\{i_j\in[n]\setminus Q\}_{j=1}^{d+1}$, the contribution of $U(X_{i_1},\ldots,X_{i_{d+1}})$ to $\Ex_{X'}[B(R_1^+,\ldots,R_{d+1}^+)]$ is at most
  \begin{equation*}
    \int_{\prod_j Y_j}U(y_1,\ldots,y_{d+1})\drv{y}\leq\|U\|_{\ncut{d}}^+.
  \end{equation*}
  There are fewer than $n^{d+1}$ such choices of indices.
  The remaining terms where $i_j\in Q$ for at least one $j\in[d+1]$ contribute at most $(2n|Q|)^d\leq (2(d+1)nq)^d$ in absolute value, yielding the bound
  \begin{equation*}
    \Ex_{X'}\left[B(R_1^+,\ldots,R_{d+1}^+)\right]\leq n^{d+1}\|U\|_{\ncut{d}}^++2^d\left((d+1)nq\right)^d.
  \end{equation*}
  Note that $B(R_1^+,\ldots,R_{d+1}^+)$ is a function of the random variables $X'$.
  Moreover, $B(R_1^+,\ldots,R_{d+1}^+)$ varies by at most $4n^d$ when a single coordinate of $X'$ is changed.
  Thus, by \cref{prop:iid-concentration}, we have for a given $\epsilon>0$, with probability at least $1-\exp(-\epsilon^2 n/2)$,
  \begin{equation*}
    B(R_1^+,\ldots,R_{d+1}^+)\leq\Ex_{X'}\left[B(R_1^+,\ldots,R_{d+1}^+)\right]+4\epsilon n^{d+1}.
  \end{equation*}
  The number of possible choices of sets $R_1,\ldots,R_{d+1}$ is at most $2^{q(d+1)}$, so that the above bound holds for all $\{R_j\subset Q_j\}_{j=1}^{d+1}$ with probability at least $1-2^{q(d+1)}\exp(-\epsilon^2 n/2)$.
  This property is preserved when taking the expectation over $Q_1,\ldots,Q_{d+1}$, so that, by \cref{lemma:random-cut-multi},
  \begin{align*}
    \|B\|_{\ncut{d}}^+ &\leq \frac{1}{n^{d+1}}\Ex_{Q_1,\ldots,Q_{d+1}}\left[\max_{R_j\subseteq Q_j}B(R_1^+,\ldots,R_{d+1}^+)\right] + \frac{d+1}{\sqrt{q}} \\
                      &\leq \|U\|_{\ncut{d}}^+ + \frac{(2(d+1)q)^d}{n} + 4\epsilon + \frac{d+1}{\sqrt{q}}
  \end{align*}
  with probability at least $1-2^{q(d+1)}\exp\left(-\epsilon^2 n/2\right)$.

  Put $q=\lfloor n^{1/(d+1)}\rfloor/(2(d+1))$ and $\epsilon=\sqrt{2(d+1)/q}$, noting that $q\geq 1$ for $n$ large enough to make the desired bound nontrivial.
  This yields the bound
  \begin{equation*}
    \|B\|_{\ncut{d}}^+-\|U\|_{\ncut{d}}^+\leq\frac{5(d+1)^2}{\sqrt{n^{1/(d+1)}}}
  \end{equation*}
  with probability at least $1-\exp(-n^{1/(d+1)}/2)$.

  \medskip
  \noindent\textit{(Conclusion of Proof)}
  Combining the lower and upper bounds, we see that
  \begin{equation*}
    -3r(n,d+1)\leq\|B\|_{\ncut{d}}^+-\|U\|_{\ncut{d}}^+\leq\frac{5(d+1)^2}{\sqrt{n^{1/(d+1)}}}
  \end{equation*}
  with probability at least $1-2\exp(-n^{1/(d+1)}/2)$, as desired.
\end{proof}

\begin{proof}[Proof of \cref{lemma:norm-concentrate}]
  Apply \cref{lemma:pre-first-sampling} to both $U$ and $-U$.
  Since $\|U\|_{\ncut{d}}=\max\{\|U\|_{\ncut{d}}^+,\|-U\|_{\ncut{d}}^+\}$, this yields the desired tail bound.
\end{proof}

\subsection{Sampling Lemma}

With these results established, we now prove \cref{lemma:sampling}.
We will first need the following result.

\begin{lemma}\label{lemma:exp-weighted-sample}
  Let $(H,\omega)$ be a weighted simplicial complex on $n$ nodes.
  Let $\sample{K}(H)$ be the random simplicial complex drawn from $H$.
  For every $p\in[0,1]$, we have
  \begin{equation*}
    \Ex\left[\labelcutd_{\ncut{d}}\left(\sample{K}(H),\faceted{H}\right)\right]\leq
    \frac{3(d+1)^p+1}{n^p}.
  \end{equation*}
\end{lemma}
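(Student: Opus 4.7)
The plan is to separate the cut-distance into two contributions—a deterministic part from tuples with repeated coordinates and a random part from tuples with distinct coordinates—then control the latter via a pointwise second-moment bound combined with an Azuma-style concentration inequality and a union bound over subset configurations. The free parameter $p$ is produced at the end by interpolating between the trivial bound and the concentration-based estimate.

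First I would expand
\[
\labelcutd_{\ncut{d}}(\sample{K}(H), \faceted{H}) = \frac{1}{n^{d+1}} \max_{S_1, \ldots, S_{d+1} \subseteq [n]} \bigl| X(S_1, \ldots, S_{d+1}) \bigr|,
\]
where $X(S) = \sum_{\mathbf{i} \in \prod_j S_j}(\mathbf{1}[\{i_k\} \in \sample{K}(H)^{(d)}] - \faceted{\omega}(\{i_k\}))$, and split $X = X_{\mathrm{dist}} + X_{\mathrm{rep}}$ according to whether the coordinates of $\mathbf{i}$ are all distinct. Tuples with a repeat contribute only through the $\faceted{\omega}$ term (the indicator vanishes for sets of cardinality less than $d+1$) and number at most $(d+1)^2 n^d$, so $|X_{\mathrm{rep}}|/n^{d+1} \leq (d+1)^2/n$ deterministically; this yields the $+1$ summand in the numerator of the stated bound (up to absorbed constants). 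For distinct tuples, the inductive construction of $\sample{K}(H)$ gives $\mathbf{1}[\sigma \in \sample{K}(H)^{(d)}] = \prod_{\sigma' \subseteq \sigma} Y_{\sigma'}$ with independent $Y_{\sigma'} \sim \mathrm{Bernoulli}(\omega(\sigma'))$, so each summand is mean zero. I would bound $\Ex[X_{\mathrm{dist}}(S)^2]$ for fixed $S$ by partitioning the covariance sum according to $|\sigma \cap \tau|$, using that these covariances are nonnegative, bounded above by $1$, and vanish on disjoint pairs, to obtain a polynomial-in-$n$ variance bound.

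To pass from the pointwise estimate to $\Ex \max_S |X_{\mathrm{dist}}(S)|$, I would apply the Azuma-style bound of \cref{prop:iid-concentration} to $X(S)$ viewed as a function of the independent Bernoullis $Y_\sigma$, combined with a union bound over the $2^{n(d+1)}$ subset configurations. The parameter $p \in [0, 1]$ is then produced by Hölder-type interpolation between the trivial estimate (cut-distance at most $1$, giving $p = 0$) and the probabilistic estimate, yielding the $3(d+1)^p/n^p$ contribution. The main obstacle lies in the covariance analysis: when $\sigma$ and $\tau$ share nodes, the indicators both depend on the shared sub-Bernoullis $Y_{\sigma'}$ for $\sigma' \subseteq \sigma \cap \tau$, inducing positive correlations whose magnitude depends delicately on $\omega$. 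A uniform-in-$\omega$ bound is only possible because each individual covariance is at most $1$; careful combinatorial bookkeeping over the overlap patterns $|\sigma \cap \tau| = 1, \ldots, d+1$ is what drives the $(d+1)^p$-dependence of the final constant.
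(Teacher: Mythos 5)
Your approach diverges substantially from the paper's, and there is a genuine coherence problem at its core.

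\paragraph{Structural differences.} The paper does not split tuples into distinct and repeated at all. Instead it invokes \cref{lemma:disjoint-cut} to pass from the full cut-norm to a supremum over \emph{pairwise disjoint} $S_1,\ldots,S_{d+1}\subseteq[n]$. This does two things at once: it makes repeated-coordinate tuples a nonissue (every tuple in $\prod_j S_j$ is automatically distinct when the $S_j$ are disjoint), and it collapses the union bound from your $2^{(d+1)n}$ configurations to $(d+2)^n$ (each node is assigned to one of the $d+1$ classes or to none). The proof then defines a ``bad'' configuration, bounds its probability by a Chernoff--Hoeffding estimate $\exp\bigl(-2\epsilon(n/(d+1))^{d+1}\bigr)$, union-bounds, and finally chooses $\epsilon=3((d+1)/n)^p$ to make the union-bound tail contribute the additive $1/n^p$. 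There is no covariance analysis, no H\"older interpolation; the parameter $p$ is simply a free choice of $\epsilon$. This also means your reading of the ``$+1$'' in the numerator as coming from the repeated-coordinate tuples is off: in the paper that term absorbs the residual probability of the union-bound event, not a deterministic count of degenerate tuples.

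\paragraph{The gap in the concentration step.} The central problem in your proposal is that the second-moment (covariance) estimate and the Azuma-style inequality of \cref{prop:iid-concentration} do not fit together. Azuma as invoked in that proposition uses \emph{bounded differences}, not variances, so the covariance bookkeeping you describe is vestigial. If instead you intended to use the variance bound directly, the only tail you get is Chebyshev's, which is polynomial in $n$ and cannot possibly survive a union bound over $2^{(d+1)n}$ configurations. And if you do switch to pure bounded-difference Azuma on $X_{\mathrm{dist}}(S)$ viewed as a function of the coins $\{Y_\sigma\}$, note that a single low-dimensional coin $Y_\sigma$ with $|\sigma|=2$ can flip every $X_\tau$ with $\tau\supseteq\sigma$, of which there are of order $n^{d-1}$; one obtains $\sum_\sigma c_\sigma^2=\Theta(n^{2d})$, so the tail is $\exp\bigl(-\Theta(\epsilon^2 n^2)\bigr)$ at deviation scale $\epsilon n^{d+1}$, which only beats your union bound when $\epsilon\gtrsim\sqrt{(d+1)/n}$. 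That recovers the rate $p=1/2$ but not the full range $p\in[0,1]$ that the lemma asserts. So even granting every step, your route terminates short of the stated conclusion. Finally, the claim that the $(d+1)^p$ dependence ``is what drives'' the overlap-pattern bookkeeping is a misattribution: in the paper it is an artifact of the explicit choice of $\epsilon$, unrelated to any covariance combinatorics.
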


\begin{proof}
  Let $\epsilon>0$ be given.
  For $\{i_j\in[n]\}_{j=1}^{d+1}$, define the random variable $X_{\vect{i}}=\mathbf{1}(\vect{i}\in\sample{K}(H))$.
  Let $S_1,\ldots,S_{d+1}$ be pairwise disjoint subsets of $[n]$, and let $A_{K,d}$ and $A_{\faceted{H},d}$ be the (weighted) $d$-dimensional adjacency matrices associated to $\sample{K}(H)$ and $\faceted{H}$, respectively.
  We call such a collection of sets \emph{bad} if
  \begin{equation*}
    \left|(A_{K,d}-A_{\faceted{H},d})(S_1,\ldots,S_{d+1})\right| >
    \epsilon \left(\frac{n}{d+1}\right)^{d+1},
  \end{equation*}
  Note that this is only possible if $\left|\prod_j S_j\right|>\epsilon(n/(d+1))^{d+1}$.
  Then, by the Chernoff-Hoeffding inequality,
  \begin{equation*}
    \Pr_{\sample{K}(H)}
    \left\{\left|\sum_{\vect{i}\in\prod_j S_j}(X_{\vect{i}}-\Ex[X_{\vect{i}}])\right|
      >\epsilon\left(\frac{n}{d+1}\right)^{d+1}\right\}
    \leq
    2\exp\left(-2\epsilon\left(\frac{n}{d+1}\right)^{d+1}\right).
  \end{equation*}
  There are $(d+2)^n$ such collections of disjoint subsets, so the probability of a bad collection existing is bounded by
  \begin{equation*}
    2\cdot(d+2)^n\exp\left(-2\epsilon\left(\frac{n}{d+1}\right)^{d+1}\right).
  \end{equation*}
  By \cref{lemma:disjoint-cut,lemma:finite-to-complexon-cut}, the nonexistence of bad subsets implies $\labelcutd_{\ncut{d}}(\sample{K}(H),\faceted{H})\leq\epsilon$.
  Therefore, we have
  \begin{equation*}
    \Ex\left[\labelcutd_{\ncut{d}}(\sample{K}(H),\faceted{H})\right] \leq
    \epsilon + 2\cdot(d+2)^n\exp\left(-2\epsilon\left(\frac{n}{d+1}\right)^{d+1}\right).
  \end{equation*}
  It is sufficient to consider the case when $n^p>3(d+1)^p+1$, otherwise the bound is trivial.
  In particular, taking $\epsilon=3\left(\frac{d+1}{n}\right)^p$ yields the desired bound.
\end{proof}



\begin{proof}[Proof of \cref{lemma:sampling}]
  Let integers $d\geq 1, n>1$ be given.
  We bound the expected cut-distance for any finite, nonnegative sequence $\alpha_1,\ldots,\alpha_{d}$ as follows.
  For some partition $\partition{P}$ of $[0,1]$ and a random $n$-subset $S\subseteq[0,1]$:
  \begin{subequations}
    \begin{align}
      \Ex\left[\cutd_{\cut}(\faceted{W},\sample{K}(W[S]);(\alpha_j))\right]
      &\leq \labelcutd_{\cut}(\faceted{W},\faceted{W}_{\partition{P}};(\alpha_j)) \label{eq:partition-bound}\\
      &\quad+ \Ex\left[\cutd_{\cut}(\faceted{W}_{\partition{P}},\faceted{W}_{\partition{P}}[S];(\alpha_j))\right] \label{eq:simple-bound}\\
      &\quad+ \Ex\left[\labelcutd_{\cut}(\faceted{W}_{\partition{P}}[S],\faceted{W}[S];(\alpha_j))\right] \label{eq:first-sampling-bound}\\
      &\quad+ \Ex\left[\labelcutd_{\cut}(\faceted{W}[S],\sample{K}(W[S]);(\alpha_j))\right], \label{eq:random-complex-bound}
    \end{align}
  \end{subequations}
  by the triangle inequality.
  We now bound each of the above summands individually.

  \medskip
  \noindent\textit{Bound~\eqref{eq:partition-bound}:}
  Let $m=\lceil n^{1/4}\rceil$.
  By \cref{lemma:equipartitioning}, there is an $m$-equipartition $\partition{P}=\{P_j\}_{j=1}^m$ of $[0,1]$ such that
  \begin{equation*}
    \labelcutd_{\cut}(\faceted{W},\faceted{W}_{\partition{P}};(\alpha_j))\leq
    \frac{\sum_{j=1}^d\alpha_j\left(4\sqrt{d+1}+2^j\right)}{\sqrt{\log_2{n}}}
  \end{equation*}
  for all finite, nonnegative sequences $\alpha_1,\ldots,\alpha_d$, where $\faceted{W}_{\partition{P}}$ denotes the projection of $\faceted{W}$ to a stepfunction on $\partition{P}$ (as opposed to the faceted complexon corresponding to $W_{\partition{P}}$).

  \medskip
  \noindent\textit{Bound~\eqref{eq:simple-bound}:}
  Let $H=\faceted{W}_{\partition{P}}[S]$, and denote the corresponding complexon by $\faceted{W}_{H}$.
  Following the proof of \citep[Lemma~10.16]{Lovasz2012}, observe that both $\faceted{W}_{H}$ and $\faceted{W}_{\partition{P}}$ are stepfunctions on $m$-partitions of $[0,1]$, with the same function values on corresponding steps.
  Each step of $\faceted{W}_{\partition{P}}$ has measure $1/m$, while each step of $\faceted{W}_{H}$ has measure $|P_i\cap S|/n=1/m+r_i$ for $i\in[m]$, where $r_i$ denotes some residual.
  It is easy to see that
  \begin{equation*}
    \cutd_{\cut}(\faceted{W}_{\partition{P}},\faceted{W}_{H};(\alpha_j)) \leq \sum_{i=1}^m|r_i|\sum_{j=1}^d\alpha_j(j+1).
  \end{equation*}
  By the estimate in the proof of \citep[Lemma~10.16]{Lovasz2012}, we have
  \begin{equation*}
    \Ex\left[\cutd_{\cut}(\faceted{W}_{\partition{P}},\faceted{W}_{H};(\alpha_j))\right]\leq
    \Ex\left[\sum_{i=1}^m|r_i|\right]\sum_{j=1}^d\alpha_j(j+1)<
    \frac{\sum_{j=1}^d\alpha_j(j+1)}{n^{3/8}}.
  \end{equation*}

  \medskip
  \noindent\textit{Bound~\eqref{eq:first-sampling-bound}:}
  By \cref{lemma:norm-concentrate}, we have
  \begin{equation*}
    \left| \labelcutd_{\ncut{d}}(\faceted{W}[S],\faceted{W}_{\partition{P}}[S]) - \labelcutd_{\ncut{d}}(\faceted{W},\faceted{W}_{\partition{P}}) \right| \leq \frac{5(d+1)^2}{\sqrt{n^{1/(d+1)}}}
  \end{equation*}
  with probability at least $1-8\exp(-n^{1/(d+1)}/2)$.
  This implies that
  \begin{equation*}
    \Ex\left[\left| \labelcutd_{\ncut{d}}(\faceted{W}[S],\faceted{W}_{\partition{P}}[S]) - \labelcutd_{\ncut{d}}(\faceted{W},\faceted{W}_{\partition{P}}) \right|\right] \leq \frac{5(d+1)^2+5}{\sqrt{n^{1/(d+1)}}}.
  \end{equation*}
  Applying the triangle inequality yields
  \begin{align*}
    \Ex\left[\labelcutd_{\ncut{d}}(\faceted{W}[S],\faceted{W}_{\partition{P}}[S])\right]
    &\leq \Ex\left[\left| \labelcutd_{\ncut{d}}(\faceted{W}[S],\faceted{W}_{\partition{P}}[S]) - \labelcutd_{\ncut{d}}(\faceted{W},\faceted{W}_{\partition{P}}) \right|\right] \\
    &\qquad+ \labelcutd_{\ncut{d}}(\faceted{W},\faceted{W}_{\partition{P}}) \\
    &\leq \frac{5(d+1)^2+5}{\sqrt{n^{1/(d+1)}}} + \frac{2^d+4\sqrt{d+1}}{\sqrt{\log_2{n}}} \\
    &\leq \frac{2^{d+2}}{\sqrt{\log_2{n}}},
  \end{align*}
  where the final inequality holds by assuming that $n$ is large enough so that the bound in \cref{lemma:sampling} is nontrivial.
  By linearity of expectation, this yields
  \begin{equation*}
    \Ex\left[\labelcutd_{\cut}(\faceted{W}[S],\faceted{W}_{\partition{P}}[S];(\alpha_j))\right]\leq
    \frac{\sum_{j=1}^d\alpha_j\left(2^{j+2}\right)}{\sqrt{\log_2{n}}}.
  \end{equation*}

  \medskip
  \noindent\textit{Bound~\eqref{eq:random-complex-bound}:}
  By \cref{lemma:exp-weighted-sample}, we have
  \begin{equation*}
    \Ex\left[\labelcutd_{\ncut{d}}(\faceted{W}[S],\sample{K}(W[S]))\right]\leq
    \frac{3(d+1)^{3/8}+1}{n^{3/8}},
  \end{equation*}
  which implies by linearity of expectation
  \begin{equation*}
    \Ex\left[\labelcutd_{\cut}(\faceted{W}[S],\sample{K}(W[S]);(\alpha_j))\right]\leq
    \frac{\sum_{j=1}^d\alpha_j\left(3(j+1)^{3/8}+1\right)}{n^{3/8}}.
  \end{equation*}
  
  \medskip
  \noindent\textit{Bounding expected cut-distance:}
  Combining the respective bounds of~\labelcref{eq:partition-bound,eq:simple-bound,eq:first-sampling-bound,eq:random-complex-bound} yields the following:
  \begin{align*}
    \Ex\left[\cutd_{\cut}(\faceted{W},\sample{K}(W[S]);(\alpha_j))\right]
    &\leq \sum_{j=1}^d\alpha_j
      \left(\frac{4\sqrt{d+1}+2^j+2^{j+2}}{\sqrt{\log_2{n}}}
      +\frac{j+2+3(j+1)^{3/8}}{n^{3/8}}
      \right).
  \end{align*}
  Since $\sum_{j=1}^d\alpha_j$ is always a bound for $\cutd(\faceted{W},\sample{K}(W[S]);(\alpha_j))$, it is sufficient to consider $n$ sufficiently large so that the bound in \cref{lemma:sampling} is nontrivial.
  In this regime, we have
  \begin{equation*}
    \Ex\left[\cutd_{\cut}(\faceted{W},\sample{K}(W[S]);(\alpha_j))\right]\leq
    \frac{8\cdot 2^d}{\sqrt{\log_2{n}}}\sum_{j=1}^d\alpha_j.
  \end{equation*}

  Observing that $f(F)=\vert(F)\cdot\cutd(F,\faceted{W};(\alpha_j))/\sum_{j=1}^d\alpha_j$ is a reasonably smooth parameter for any (nonzero) finite, nonnegative sequences $\alpha_1,\ldots,\alpha_d$, \cref{thm:sample-concentration} implies
  \begin{equation*}
    \cutd_{\cut}(\faceted{W},\sample{K}(W[S]);(\alpha_j))\leq
    \frac{8\cdot 2^d+1}{\sqrt{\log_2{n}}}\sum_{j=1}^d\alpha_j
  \end{equation*}
  with probability at least $1-\exp(-n/(2\log_2{n}))$.
  Since $\sample{K}(W[S])$ is identically distributed to $\sample{K}(n,W)$, this yields the desired bound.
\end{proof}

\section{Proof of \Cref{lemma:inverse-counting}}
\label{sec:proofs:inverse-counting}

Given our interest in cut-distances that only consider the first $d$ dimensions, we denote by $\sample{K}_d(n,W)$ the $d$-skeleton of the random simplicial complex $\sample{K}(n,W)$.
Equivalently, if we define $W_d$ as the complexon that is equal to $W$ in all dimensions less than or equal to $d$, and $0$ otherwise, $\sample{K}_d(n,W)$ is equal to $\sample{K}(n,W_d)$ in distribution.

We state a weaker version of \cref{lemma:pre-inverse-counting} in terms of total variation distances first, which we then strengthen to be in terms of homomorphism densities.
\begin{lemma}\label{lemma:pre-inverse-counting}
  Let $U,W$ be two complexons, and suppose that for some $d\geq 1, n\geq 2$, we have
  \begin{equation*}
    \dvar(\sample{K}_d(n,U),\sample{K}_d(n,W))<1-2\exp\left(-\frac{n}{2\log_2{n}}\right),
  \end{equation*}
  where $\dvar$ denotes the total variation distance between probability distributions.
  Then, for all finite nonnegative sequences $\alpha_1,\ldots,\alpha_d$,
  \begin{equation*}
    \cutd_{\cut}(\faceted{U},\faceted{W};(\alpha_j))\leq
    \frac{2^{d+4}+2}{\sqrt{\log_2{n}}}\sum_{j=1}^d\alpha_j.
  \end{equation*}
\end{lemma}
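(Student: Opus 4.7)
The plan is to deduce cut-closeness of $\faceted{U}$ and $\faceted{W}$ from closeness of their sampled complexes via a maximal-coupling argument combined with the Sampling Lemma. The idea is that a total variation bound allows us to couple $\sample{K}_d(n,U)$ and $\sample{K}_d(n,W)$ so that they coincide with high enough probability, while each marginal remains close to its parent complexon in the cut-metric by Lemma~\ref{lemma:sampling}.

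First I would apply the Sampling Lemma to both $U$ and $W$. Because the cut-distance $\cutd_{\cut}(\cdot,\cdot;(\alpha_j)_{j=1}^d)$ only depends on structure in dimensions $1,\ldots,d$, it takes the same value on $\sample{K}(n,W)$ and on its $d$-skeleton $\sample{K}_d(n,W)$. Hence, with probability at least $1-\exp(-n/(2\log_2 n))$,
\begin{equation*}
\cutd_{\cut}(\faceted{U},\sample{K}_d(n,U);(\alpha_j))\leq \frac{8\cdot 2^d+1}{\sqrt{\log_2 n}}\sum_{j=1}^d \alpha_j,
\end{equation*}
and likewise for $W$. Denote these events $B_U$ and $B_W$.

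Second, by the characterization of total variation distance there exists a coupling $(K_U,K_W)$ with the correct marginals such that the event $A=\{K_U=K_W\}$ satisfies $\Pr(A)=1-\dvar(\sample{K}_d(n,U),\sample{K}_d(n,W))>2\exp(-n/(2\log_2 n))$. Since the events $B_U,B_W$ depend only on the respective marginals, their probabilities are unchanged under any such coupling, and the union bound gives
\begin{equation*}
\Pr(A\cap B_U\cap B_W)\geq \Pr(A)-\Pr(B_U^c)-\Pr(B_W^c)>0,
\end{equation*}
so some joint realization lies in all three events.

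Finally, on this realization the triangle inequality for the pseudometric $\cutd_{\cut}$, together with Lemma~\ref{lemma:finite-to-complexon-cut} to identify simplicial complexes with their associated complexons, gives
\begin{equation*}
\cutd_{\cut}(\faceted{U},\faceted{W};(\alpha_j))\leq \cutd_{\cut}(\faceted{U},K_U;(\alpha_j))+\cutd_{\cut}(K_U,K_W;(\alpha_j))+\cutd_{\cut}(K_W,\faceted{W};(\alpha_j)),
\end{equation*}
in which the middle term vanishes while the outer two are each controlled by the Sampling Lemma. Using $2(8\cdot 2^d+1)=2^{d+4}+2$ yields the claimed bound; since the left-hand side is deterministic, the inequality holds unconditionally. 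I expect no serious obstacle: the only nonroutine step is the coupling argument that converts a total-variation bound on samples into a deterministic cut-distance bound on the underlying complexons, and this is exactly what the hypothesis $\dvar < 1-2\exp(-n/(2\log_2 n))$ is calibrated for, so that the coupling success probability exceeds twice the Sampling Lemma failure probability.
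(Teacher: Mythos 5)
Your proposal is correct and follows essentially the same route as the paper's own proof: a maximal coupling from the total variation hypothesis, two applications of the Sampling Lemma, and a triangle inequality with the middle term vanishing on a positive-probability event. You are in fact slightly more careful than the paper in noting explicitly that $\cutd_{\cut}$ with a finite sequence $(\alpha_j)_{j=1}^d$ is insensitive to simplices above dimension $d$, which is what licenses replacing $\sample{K}(n,\cdot)$ by its $d$-skeleton in the Sampling Lemma.
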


\begin{proof}
  We prove this bound via the probabilistic method.
  By assumption, we can couple $\sample{K}_d(n,U)$ and $\sample{K}_d(n,W)$ so that with probability at least $2\exp(-n/(2\log_2{n}))$,
  \begin{equation}\label{eq:pinvcou-1}
    \sample{K}_d(n,U)=\sample{K}_d(n,W).
  \end{equation}
  By \cref{lemma:sampling}, we have with probability at least $1-\exp(-n/(2\log_2{n}))$,
  \begin{equation}\label{eq:pinvcou-2}
    \cutd_{\cut}(\faceted{U},\sample{K}_d(n,U);(\alpha_j))\leq
    \frac{8\cdot 2^{d}+1}{\sqrt{\log_2{n}}}\sum_{j=1}^d\alpha_j.
  \end{equation}
  Similarly, with probability at least $1-\exp(-n/(2\log_2{n}))$,
  \begin{equation}\label{eq:pinvcou-3}
    \cutd_{\cut}(\faceted{W},\sample{K}_d(n,W);(\alpha_j))\leq
    \frac{8\cdot 2^{d}+1}{\sqrt{\log_2{n}}}\sum_{j=1}^d\alpha_j.
  \end{equation}
  With positive probability, then, \labelcref{eq:pinvcou-1,eq:pinvcou-2,eq:pinvcou-3} hold simultaneously, which implies that
  \begin{align*}
    \cutd_{\cut}(\faceted{U},\faceted{W};(\alpha_j))
    &\leq \cutd_{\cut}(\faceted{U},\sample{K}_d(n,U);(\alpha_j)) \\
    &\quad+ \cutd_{\cut}(\sample{K}_d(n,U),\sample{K}_d(n,W);(\alpha_j)) \\
    &\quad+ \cutd_{\cut}(\sample{K}_d(n,W),\faceted{W};(\alpha_j)) \\
    &\leq \frac{2^{d+4}+2}{\sqrt{\log_2{n}}}\sum_{j=1}^d\alpha_j,
  \end{align*}
  as desired.
\end{proof}

The inverse counting lemma follows, by showing that closeness in homomorphism densities implies the condition of \cref{lemma:pre-inverse-counting}.

\begin{proof}[Proof of \cref{lemma:inverse-counting}]
  For convenience, put $f(n,d)=0.999\cdot 2^{-((1+n)^{d+2})}$.
  Let $F$ be a finite, simplicial complex on $n$ nodes of dimension $d$.
  By the inclusion-exclusion principle, the induced homomorphism density can be written as
  \begin{equation*}
    \tind(F,W_d) = \sum_{G\subseteq\antifaceted{F}:\dim{G}\leq d}\left(-1\right)^{|G|}\thom(F\cup G,W),
  \end{equation*}
  with a similar expression holding for the complexon $U$.
  Since each $F\cup G$ yields a simplicial complex of dimension $d$, we have $|t(F\cup G,U)-t(F\cup G,W)|\leq f(n,d)$ for all such $F,G$, by assumption.
  Observe that for any simplicial complex $F$ on $n$ nodes of dimension $d$, the number of antifacets of $F$ of dimension at most $d$ is bounded by $\binom{n}{d+1}$.
  This implies that
  \begin{equation*}
    \left|\tind(F,U_d)-\tind(F,W_d)\right|\leq
    2^{\binom{n}{d+1}}\cdot f(n,d).
  \end{equation*}
  Then, by \cref{lemma:ind-sample}, for any such $F$,
  \begin{equation*}
    \left|\Pr\left\{\sample{K}_d(n,U)=F\right\}-\Pr\left\{\sample{K}_d(n,W)=F\right\}\right|\leq
    2^{\binom{n}{d+1}}\cdot f(n,d),
  \end{equation*}
  keeping in mind that there exists at least one $F$ such that strict inequality holds.
  Therefore, assuming $n$ is sufficiently large so that the bound on $\cutd_{\cut}(\faceted{U},\faceted{W};(\alpha_j))$ is nontrivial (\ie, less than $\sum_j\alpha_j$), summing over all simplicial complexes $F$ on $[n]$ of dimension $d$ yields the bound
  \begin{align*}
    \dvar(\sample{K}_d(n,U),\sample{K}_d(n,W))
    &= \sum_{F}\left|\Pr\left\{\sample{K}(n,U)=F\right\}-\Pr\left\{\sample{K}(n,W)=F\right\}\right| \\
    &< 2^{\sum_{j=1}^{d+1}\binom{n}{j}}2^{\binom{n}{d+1}}\cdot f(n,d) \\
    &\leq 1-2\exp\left(-\frac{n}{2\log_2{n}}\right).
  \end{align*}
  The lemma follows from \cref{lemma:pre-inverse-counting}.
\end{proof}

\section{Proofs from \Cref{sec:space:compact}}
\label{sec:proofs:compactness}

\begin{proof}[Proof of \cref{lemma:topological-equivalence}]
  Let $\epsilon>0$ and $U\in\kerns$ be given.
  Let $B_\epsilon(U;(\alpha_j))$ be the open ball centered at $U$ of radius $\epsilon$ with respect to the pseudometric $\cutd_{\cut}(\cdot,\cdot;(\alpha_j))$.
  Similarly, let $B_\epsilon(U;(\beta_j))$ be defined in the same way using the sequence $(\beta_j)$ to define the pseudometric.

  By assumption, there exists an $M$ such that
  \begin{align*}
    \sum_{j>M}\alpha_j &< \epsilon/2 \\
    \sum_{j>M}\beta_j &< \epsilon/2.
  \end{align*}
  Put
  \begin{equation*}
    \epsilon' = \frac{\epsilon\cdot\min_{j\leq M}\beta_j}{2M\cdot\max_{j\leq M}\alpha_j},
  \end{equation*}
  noting that $\epsilon'>0$, since both sequences are strictly positive.

  Let $W\in B_{\epsilon'}(U;(\beta_j))$ be given.
  Then, there exists a measure-preserving bijection $\phi:[0,1]\to[0,1]$ such that
  \begin{equation*}
    \labelcutd_{\cut}(U,W^\phi;(\beta_j)_{j\geq 1}) < \epsilon'.
  \end{equation*}
  This implies that for all $j\leq M$,
  \begin{equation*}
    \labelcutd_{\ncut{j}}(U,W^\phi)\leq\frac{\epsilon}{2M\alpha_j}.
  \end{equation*}
  One can then check that this implies $\labelcutd_{\cut}(U,W^\phi;(\alpha_j)_{j\geq 1})<\epsilon$, so that $B_{\epsilon'}(U;(\beta_j))\subseteq B_\epsilon(U;(\alpha_j))$.
  Applying the same argument symmetrically and for arbitrary values of $\epsilon$ yields the fact that the topologies on $\kerns$ induced by both metrics are equal, as desired.
\end{proof}

We use a similar technique to prove \cref{prop:simple-dense}.

\begin{proof}[Proof of \cref{prop:simple-dense}]
  We show that every complexon $W\in\kerns$ arises as the limit of a sequence in $\fkerns$ in the canonical topology.
  By \cref{lemma:topological-equivalence}, we can choose an arbitrary strictly positive, summable sequence $(\alpha_j)_{j\geq 1}$ with which to define the pseudometric $\cutd_{\cut}(\cdot,\cdot;(\alpha_j))$.
  Let $\epsilon>0$ be given, and consider the sequence of random simplicial complexes $(\sample{K}(n,W))_{n\geq 1}$.
  Let $d$ be such that $\sum_{j>d}\alpha_j<\epsilon/2$.
  By \cref{coro:sampling-convergence}, the sequence $\sample{K}(n,W)$ converges to $\faceted{W}$ with respect to the metric $\cutd_{\cut}(\cdot,\cdot;(\alpha_j)_{j\leq d})$ with probability $1$.
  Therefore, there is some $m_0$ such that for all $m>m_0$, it holds (with probability $1$) that $\cutd_{\cut}(\sample{K}(n,W),\faceted{W};(\alpha_j)_{j\leq d})<\epsilon/2$.
  Thus, we also have for all $m>m_0$ that $\cutd_{\cut}(\sample{K}(n,W),\faceted{W};(\alpha_j)_{j\geq 1})<\epsilon$.
  Observe that $W_{\sample{K}(n,W)}=\faceted{W_{\sample{K}(n,W)}}$.
  Since $\epsilon$ was given arbitrarily, this establishes the convergence of $\sample{K}(n,W)$ to $W$ in the canonical topology.
  Of course, by \cref{lemma:finite-to-complexon-cut}, each $\sample{K}(n,W)$ can be equivalently represented by an element of $\fkerns$, establishing the desired result.
\end{proof}

Before proceeding with the proof of \cref{thm:compactness}, we state the following useful result, which is a corollary of \cref{thm:partition}.

\begin{corollary}\label{coro:partition-refinement}
  Let $W$ be a complexon, and let $1\leq m < n, d\geq 1$.
  For every $m$-partition $\partition{Q}$ of $[0,1]$, there is an $n$-partition $\partition{P}$ refining $\partition{Q}$ such that, for all $0\leq j\leq d$,
  \begin{equation*}
    \labelcutd_{\ncut{j}}(W,W_{\partition{P}}) \leq
    \sqrt{\frac{d+1}{\log_2{n/m}}}.
  \end{equation*}
\end{corollary}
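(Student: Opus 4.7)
My plan is to prove this corollary by adapting the iterative proof of \cref{thm:partition} (the multidimensional Frieze-Kannan weak regularity lemma). That proof is iterative in nature: starting from the trivial partition, at each step $t$, if for some $j \leq d$ the cut-distance $\labelcutd_{\ncut{j}}(W, W_{\partition{P}_t})$ exceeds the current target, one finds witness subsets $S_1, \ldots, S_{j+1} \subseteq [0,1]$ achieving the cut-norm and refines $\partition{P}_t$ by intersecting its parts with each $S_k$ and its complement. An energy-increment argument, leveraging $\|W_{\partition{P}_t}\|_2^2 \leq \|W\|_2^2 \leq 1$, bounds the number of iterations and yields the stated cut-norm error of $\sqrt{(d+1)/\log_2(n)}$ when the partition has $n$ classes.

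My approach is to run the identical iteration but starting from $\partition{P}_0 = \partition{Q}$ instead of the trivial partition. Because every refinement step merely intersects existing parts with witness sets (and their complements), the sequence $\partition{P}_0, \partition{P}_1, \ldots$ consists of successive refinements; in particular, every partition in the sequence --- including the final one --- refines $\partition{Q}$. The partition size grows by at most a factor of $2^{j+1} \leq 2^{d+1}$ per iteration, so starting from $|\partition{Q}| = m$, after $T$ iterations the size is at most $m \cdot 2^{(d+1) T}$.

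The crucial point is that the energy-increment argument depends only on the bound $\|W_{\partition{P}_t}\|_2^2 \leq 1$ and not on any structural property of the initial partition, so after $T$ steps the cut-norm error is bounded by $\sqrt{1/T}$ (up to the $d+1$ factor arising from tracking all dimensions simultaneously). Choosing $T = \lfloor \log_2(n/m)/(d+1) \rfloor$ keeps the final partition size at most $n$ while giving error at most $\sqrt{(d+1)/\log_2(n/m)}$, as claimed.

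The main obstacle is essentially a bookkeeping one: verifying that the iterative proof underlying \cref{thm:partition} is truly agnostic to the choice of initial partition, so that substituting $\partition{Q}$ for $\{[0,1]\}$ does not break any quantitative bound beyond inflating the final partition size by the multiplicative factor $m$. This is a standard feature of regularity-lemma-style proofs, since the decisive quantity is the $L^2$ mass of the current projection $W_{\partition{P}_t}$, which is trivially bounded independently of $\partition{P}_0$.
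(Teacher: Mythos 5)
Your approach is the natural and, as far as I can tell, the intended one. The paper states \cref{coro:partition-refinement} without an explicit proof, simply calling it "a corollary of \cref{thm:partition}," and the only way to obtain the claimed constant $\sqrt{(d+1)/\log_2(n/m)}$ (matching \cref{thm:partition} with $n$ replaced by $n/m$) is exactly what you propose: re-run the Frieze--Kannan energy-increment iteration with $\partition{Q}$ as the initial partition. The black-box alternative -- apply \cref{thm:partition} to get an $(n/m)$-partition $\partition{P}_0$ and then take the common refinement $\partition{P}_0 \wedge \partition{Q}$ -- would pick up an extra factor of $2$ from the Lov\'asz refinement inequality $\|W-W_{\partition{R}}\|_\cut \leq 2\|W-W_{\partition{P}}\|_\cut$ (applied when $\partition{R}$ refines $\partition{P}$), so it does not recover the stated bound.

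Two small points worth tidying up in a written-out version. First, your key observation is correct: the energy bound $\|W_{\partition{P}_t}\|_2^2 \leq 1$ (applied per dimension $j\leq d$) holds for any current partition, so seeding the iteration with $\partition{Q}$ only shifts the starting $L^2$ mass upward and, if anything, reduces the number of admissible refinement steps; the termination argument is genuinely agnostic to $\partition{P}_0$. Second, your arithmetic between "size grows by at most $2^{d+1}$ per step, energy budget of order $d+1$ across dimensions" and the final error $\sqrt{(d+1)/\log_2(n/m)}$ is not airtight -- a straightforward accounting with those premises gives $(d+1)/\sqrt{\log_2(n/m)}$ rather than $\sqrt{(d+1)/\log_2(n/m)}$. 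However, this imprecision is inherited: the same accounting applied to \cref{thm:partition} itself would give $(d+1)/\sqrt{\log_2 n}$, while the paper states $\sqrt{(d+1)/\log_2 n}$ (citing Frieze--Kannan without reproducing the constant). So this is not a gap you introduced; it is the same level of rigor at which the paper treats \cref{thm:partition}, and the scaling $\log_2 n \mapsto \log_2(n/m)$, which is what actually matters, follows cleanly from the initial partition having $m$ rather than $1$ classes.
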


\begin{lemma}\label{lemma:cut-compactness}
  The space $\kerns$ with the $\cutd$-topology is compact.
\end{lemma}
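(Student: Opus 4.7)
The plan is to extract, from an arbitrary sequence $(W_m)_{m\geq 1}$ in $\kerns$, a subsequence convergent in $\cutd_\cut(\cdot,\cdot;(\alpha_j))$. Since $(\alpha_j)$ is summable, for any $\epsilon>0$ we may choose $d$ with $\sum_{j>d}\alpha_j<\epsilon$, so it suffices to control convergence in dimensions $1$ through $d$ and then let $d\to\infty$ via a diagonal argument. By \cref{lemma:topological-equivalence}, the particular choice of sequence is immaterial, so it will be convenient to work with a rapidly decaying $(\alpha_j)$ when truncating tails.

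First, I would use \cref{lemma:equipartitioning} iteratively to produce, for each complexon $W_m$ and each level $k\geq 1$, a nested chain of equipartitions $\partition{P}_{m,1}\prec\partition{P}_{m,2}\prec\cdots$ of $[0,1]$ such that the stepfunction projection $(W_m)_{\partition{P}_{m,k}}$ approximates $W_m$ in the labeled $j$-dimensional cut-norm for each $j\leq k$, with error tending to zero as $k\to\infty$ (at a rate controlled by the number of classes in $\partition{P}_{m,k}$). Enforcing nestedness across $k$ while simultaneously controlling all $j\leq k$ requires applying the equipartitioning construction on common refinements and passing to further refinements level by level; this iterative refinement, done uniformly in $m$, is the main technical step.

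Next, for each $m$ I would pick a measure-preserving bijection $\phi_m:[0,1]\to[0,1]$ carrying $(\partition{P}_{m,k})_{k\geq 1}$ to a canonical nested sequence of equipartitions $(\partition{Q}_k)_{k\geq 1}$ (say, refined dyadic equipartitions). Replacing $W_m$ by $W_m^{\phi_m}$, which is permissible since $\cutd_\cut$ is invariant under measure-preserving transformations, the approximating partitions are now standard across all $m$. For each fixed $k$, the stepfunction values of $(W_m^{\phi_m})_{\partition{Q}_k}$ live in the compact cube $[0,1]^{N_{k,d}}$ for each dimension $d$, where $N_{k,d}$ is finite. A Cantor diagonal argument across $k$ and across dimensions then yields a subsequence $(W_{m_i}^{\phi_{m_i}})$ such that the stepfunction values on $\partition{Q}_k$ in every dimension $d$ converge coordinatewise as $i\to\infty$.

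The coordinatewise limits define, via the martingale/consistency structure inherited from the nesting $\partition{Q}_k\prec\partition{Q}_{k+1}$ (with conditional expectations across refinements), a limit complexon $W\in\kerns$, with $W$'s restriction to $[0,1]^{d+1}$ obtained as the martingale limit of the stepfunction approximations on $\partition{Q}_k$. To prove $W_{m_i}^{\phi_{m_i}}\to W$ in $\cutd_\cut(\cdot,\cdot;(\alpha_j))$, I would combine three ingredients: uniform weak-regularity error from \cref{lemma:equipartitioning} to reduce each $W_{m_i}^{\phi_{m_i}}$ and $W$ to their stepfunction approximations on $\partition{Q}_k$; coordinatewise convergence of these stepfunctions by construction, which implies cut-norm convergence because they are supported on the same finite common refinement; and summability of $(\alpha_j)$ to truncate the infinite weighted sum of dimension-wise cut-norms to a controllable finite tail. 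The main obstacle I anticipate is executing the iterative refinement so that the partitions are simultaneously nested across $k$ and uniformly accurate in every dimension $j\leq k$, since the equipartitioning bound in \cref{lemma:equipartitioning} degrades both with the number of classes and with $d$; balancing these to obtain a well-defined martingale limit is the delicate point.
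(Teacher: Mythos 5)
Your proposal recovers the Lov\'asz--Szegedy compactness argument that the paper follows essentially verbatim: weak regularity to build nested equipartitions with cardinality depending only on the level $k$, a measure-preserving rearrangement so partition classes become intervals and are common across the sequence, a Cantor diagonal subsequence so stepfunction values converge, a martingale limit across refinement levels to produce the limiting complexon, and dominated convergence together with the tail of $(\alpha_j)$ to conclude cut-convergence. The only cosmetic difference is that you invoke \cref{lemma:equipartitioning} directly to generate the nested refinements, whereas the paper packages this via \cref{thm:partition} and \cref{coro:partition-refinement}; the underlying step is identical.
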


\begin{proof}
  By \cref{lemma:topological-equivalence}, the $\cutd$-topology on $\kerns$ is the same for all strictly positive, summable sequences $(\alpha_j)_{j\geq 1}$.
  Thus, we can assume without loss of generality that $\alpha_j=2^{-j}$ for all $j\geq 1$, so that $\sum_{j\geq 1}\alpha_j=1$.

  To prove compactness, we show that every sequence $W_1,W_2,\ldots\in\kerns$ has a convergent subsequence.
  Following the proof of \citep[Theorem~5.1]{Lovasz2007}, for every $n\geq 1$, we can construct via \cref{thm:partition} partitions $\partition{P}_{n,k}$ of $[0,1]$ for $k\geq 1$, with corresponding stepfunctions $W_{n,k}=(W_n)_{\partition{P}_{n,k}}$ that satisfy
  \begin{enumerate}[label=(\roman*)]
  \item $\labelcutd_{\cut}(W_n-W_{n,k};(\alpha_j))\leq 1/k$ \label{item:1}
  \item $\partition{P}_{n,k}$ is refined by $\partition{P}_{n,k+1}$ for all $n,k$ \label{item:2}
  \item $|\partition{P}_{n,k}|=m_k$ depends only on $k$. \label{item:3}
  \end{enumerate}
  To do so, assume that for some $k\geq 0$, $\partition{P}_{n,k}$ is an $m_k$-partition of $[0,1]$ satisfying conditions \ref{item:1} and \ref{item:3}.
  Put
  \begin{align*}
    \epsilon &= \frac{1}{2(k+1)} \\
    M_\epsilon &= \lceil-\log_2\epsilon + 1\rceil \\
    m_{k+1} &= m_k\cdot 2^{4(k+1)^2(M_\epsilon+1)}.
  \end{align*}
  Applying \cref{coro:partition-refinement} with $d=M_\epsilon$, there exists an $m_{k+1}$-partition $\partition{P}_{n,k+1}$ satisfying \ref{item:1} and \ref{item:2}.
  Moreover, $m_{k+1}$ does not depend on the complexon $W_n$, so that $\partition{P}_{n,k+1}$ also satisfies \ref{item:3}.

  For the base case, let $\partition{P}_{n,1}$ be the trivial (indiscrete) partition, which satisfies conditions \ref{item:1} and \ref{item:3} with $m_k=1$.
  By induction, then, there exists a sequence of partitions $\{\partition{P}_{n,k}\}_{k\geq 1}$ satisfying conditions \ref{item:1}--\ref{item:3}.

  Following the approach of \citep[Theorem~5.1]{Lovasz2007}, we can rearrange the points of $[0,1]$ for each $n$ by a measure-preserving bijection so that every partition in every $\partition{P}_{n,k}$ is an interval, while preserving the properties \ref{item:1}--\ref{item:3}.
  Having done this, we replace the sequence $(W_n)_{n\geq 1}$ by a subsequence so that for every $k$, the sequence $W_{n,k}$ converges almost everywhere to a stepfunction $U_k$ with $m_k$ steps as $n\to\infty$.

  To do so, select a subsequence of $W_n$ such that the length of the $i^{\mathrm{th}}$ interval in $P_{n,1}$ converges for all $i\in[m_1]$.
  Then, take a further subsequence such that the value of $W_{n,1}$ converges on the product of the $i_1^{\mathrm{th}}$ and $i_2^{\mathrm{th}}$ interval, for $i_1,i_2\in[m_1]$.
  Repeat this for the products of the intervals indexed by $i_1,\ldots,i_{d+1}\in[m_1]$ for all $d\geq 1$.
  It follows that $W_{n,1}$ converges almost everywhere to a limit stepfunction $U_1$ with $m_1$ steps.

  Repeat this procedure for all $k>1$, taking further and further subsequences so that for all $k$, $W_{n,k}\to U_k$ almost everywhere, where each $U_k$ is a stepfunction on $m_k$ steps.
  This yields the desired subsequence.

  The proof proceeds identically to that of \citep[Theorem~5.1]{Lovasz2007}.
  Let $\partition{P}_k$ denote the partition of $[0,1]$ into the steps of $U_k$.
  By the condition \ref{item:2} of the array $W_{n,k}$, one can show that for every $k<l$, the partition $\partition{P}_{n,l}$ refines $\partition{P}_{n,k}$, and furthermore that $\partition{P}_l$ refines $\partition{P}_k$.
  Moreover, it holds that
  \begin{equation}\label{eq:limiting-refinement}
    U_k = (U_l)_{\partition{P}_k}.
  \end{equation}

  We now seek to establish the almost everywhere convergence of the sequence $U_k$.
  Define a function $g:\djunion_{d\geq 1}[0,1]^{d+1}\to[0,1]$ so that $g([0,1]^{d+1})=\alpha_d$, for all $d\geq 1$.
  Define a probability measure $\mu$ on $\djunion_{d\geq 1}[0,1]^{d+1}$ such that, for any Borel set $\set{A}\subseteq\djunion_{d\geq 1}[0,1]^{d+1}$,
  \begin{equation*}
    \mu(\set{A}) = \int_{\set{A}}g\drv{\lambda},
  \end{equation*}
  where $\lambda$ is the Lebesgue measure.
  Let $X$ be a random point in $\djunion_{d\geq 1}[0,1]^{d+1}$ chosen according to the measure $\mu$.
  Then, the equation \eqref{eq:limiting-refinement} implies that the sequence $U_1(X), U_2(X), \ldots$ is a martingale with bounded terms.
  By the martingale convergence theorem~\citep[see, for instance,][]{Williams1991}, this sequence is convergent with probability $1$.
  That is to say, the sequence of functions $U_1,U_2,\ldots$ converges almost everywhere, with respect to the measure $\mu$.
  Since $\mu$ is absolutely continuous with respect to $\lambda$, this implies convergence almost everywhere with respect to the Lebesgue measure.
  Denote the pointwise limit of this sequence by $U$.
  By the dominated convergence theorem, we have
  \begin{equation*}
    \lim_{k\to\infty}\sum_{j=1}^\infty\alpha_j\int_{[0,1]^{j+1}}|U(x)-U_k(x)|\drv{x} = \lim_{k\to\infty}\int|U-U_k|\drv{\mu} = 0,
  \end{equation*}
  \ie, $U_k\overset{L^1}{\to}U$ with respect to the probability measure $\mu$.

  We now show that the subsequence $W_n$ converges to $U$ in the cut-distance.
  Let $\epsilon>0$ be given arbitrarily.
  Then, there is some $k>3/\epsilon$ such that
  \begin{equation*}
    \sum_{j=1}^\infty\alpha_j\int_{[0,1]^{j+1}}|U(x)-U_k(x)|\drv{x} =
    \int|U-U_k|\drv{\mu} <
    \epsilon/3.
  \end{equation*}
  For this $k$, there is an $n_0$ such that for all $n>n_0$,
  \begin{equation*}
    \sum_{j=1}^\infty\alpha_j\int_{[0,1]^{j+1}}|U_k(x)-W_{n,k}(x)|\drv{x} =
    \int|U_k-W_{n,k}|\drv{\mu} <
    \epsilon/3,
  \end{equation*}
  again by dominated convergence.
  Then, for all $n>n_0$,
  \begin{align*}
    \cutd_{\cut}(U,W_n;(\alpha_j)_{j\geq 1}) &\leq \cutd_{\cut}(U,U_k;(\alpha_j)) \\
                                             &\quad+ \cutd_{\cut}(U_k,W_{n,k};(\alpha_j)) \\
                                             &\quad+ \cutd_{\cut}(W_{n,k},W_n;(\alpha_j)) \\
                                             &\leq \int|U-U_k|\drv{\mu} \\
                                             &\quad+ \int|U_k-W_{n,k}|\drv{\mu} \\
                                             &\quad+ \cutd_{\cut}(W_{n,k},W_n;(\alpha_j)) \\
                                             &\leq \epsilon,
  \end{align*}
  thus proving convergence of the subsequence, as desired.
\end{proof}

We define the map $\faceted{(\cdot)}:\kerns\to\kerns$ that sends any $W\in\kerns$ to $\faceted{W}$, \ie{}, the faceting map.

\begin{lemma}\label{lemma:facet-continuous}
  The faceting map $\faceted{(\cdot)}:\kerns\to\kerns$ is continuous with respect to the $\cutd$-topology on $\kerns$.
\end{lemma}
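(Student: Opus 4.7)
The plan is to derive continuity of $\faceted{(\cdot)}$ by combining \cref{lemma:counting} (the Counting Lemma) with \cref{thm:hom-cut-eq}. Fix a strictly positive summable sequence $(\alpha_j)_{j\geq 1}$ defining the $\cutd$-topology, and suppose $\cutd_{\cut}(U_n, W;(\alpha_j))\to 0$. By the very definition of the canonical topology, $\cutd_{\cut}(\faceted{U_n},\faceted{W};(\alpha_j)) = \weakcutd(U_n,W;(\alpha_j))$, so the desired conclusion is equivalent to $U_n\to W$ in the canonical topology. By \cref{thm:hom-cut-eq}, the canonical topology coincides with the topology of pointwise convergence in all homomorphism densities, so it suffices to show $\thom(F,U_n)\to\thom(F,W)$ for every simplicial complex $F$.

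To establish this, I would apply the second inequality of the Counting Lemma. For a fixed $F$, set $\beta_j = |F^{(j)}|$; this is a finite nonnegative sequence (nonzero only for $j\leq \dim F$), giving
\[ |\thom(F,U_n)-\thom(F,W)|\leq \cutd_{\cut}(U_n,W;(\beta_j)). \]
I would then compare the cut-distance with weights $(\beta_j)$ to the one with weights $(\alpha_j)$. Selecting any measure-preserving bijection $\phi_n$ so that $\sum_j \alpha_j\labelcutd_{\ncut{j}}(U_n,W^{\phi_n})$ is arbitrarily close to $\cutd_{\cut}(U_n,W;(\alpha_j))$, each individual term $\labelcutd_{\ncut{j}}(U_n,W^{\phi_n})$ for $j\leq \dim F$ is controlled by $\alpha_j^{-1}$ times that quantity. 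Forming the weighted sum with weights $\beta_j$ against this same $\phi_n$ yields
\[ \cutd_{\cut}(U_n,W;(\beta_j))\leq C_F\cdot\cutd_{\cut}(U_n,W;(\alpha_j)), \]
where $C_F=\sum_{j\leq\dim F}\beta_j/\alpha_j<\infty$. Hence the right-hand side tends to zero, giving $\thom(F,U_n)\to\thom(F,W)$ for every $F$, and applying \cref{thm:hom-cut-eq} closes the argument.

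The hard part will be the comparison of cut-distances under different weight sequences, since $\cutd_{\cut}$ involves an infimum over measure-preserving bijections that is in general sensitive to the choice of weights. Fortunately, only a one-sided comparison is needed: a single near-optimal $\phi_n$ for the strictly positive summable sequence $(\alpha_j)$ automatically forces every labeled cut-distance $\labelcutd_{\ncut{j}}(U_n,W^{\phi_n})$ up to dimension $\dim F$ to be simultaneously small, which is enough to control the finite weighted sum defining $\cutd_{\cut}(U_n,W;(\beta_j))$. This mirrors the technique already used in the proof of \cref{lemma:topological-equivalence}, so no new machinery is needed beyond the results already available in the paper.
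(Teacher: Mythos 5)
Your proof is correct, but it follows a genuinely different route from the paper's. The paper proves \cref{lemma:facet-continuous} more directly: suppose $W_m\to W$ in $\cutd_{\cut}$; then by the Counting Lemma (\cref{lemma:counting}) the distributions $\sample{K}(n,W_m)$ converge to $\sample{K}(n,W)$ in total variation for each fixed $n$; choosing $n$ large enough that the Sampling Lemma (\cref{lemma:sampling}) guarantees $\cutd_{\cut}(\faceted{W'},\sample{K}(n,W'))<\epsilon/2$ with high probability for any complexon $W'$, one can find for large $m$ a single finite complex $F$ that is simultaneously $\epsilon/2$-close to both $\faceted{W_m}$ and $\faceted{W}$, and the triangle inequality finishes. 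You instead translate the problem into the language of homomorphism densities and invoke the equivalence theorem \cref{thm:hom-cut-eq}. Your weight-comparison step is sound (and indeed mirrors the argument in \cref{lemma:topological-equivalence}): taking a near-optimal $\phi_n$ for the $(\alpha_j)$-weighted distance controls each $\labelcutd_{\ncut{j}}(U_n,W^{\phi_n})$ individually, hence also the finite $(\beta_j)$-weighted sum, giving $\cutd_{\cut}(U_n,W;(\beta_j))\leq C_F\cutd_{\cut}(U_n,W;(\alpha_j))$ with $C_F=\sum_{j\leq\dim F}\beta_j/\alpha_j<\infty$. Since \cref{thm:hom-cut-eq}'s proof rests only on \cref{lemma:counting} and \cref{lemma:inverse-counting} (and not on \cref{lemma:facet-continuous} or \cref{thm:compactness}), there is no circularity in your reduction. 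The trade-off is conceptual economy versus self-containment: your route is shorter to state and reuses a high-level theorem, but quietly leans on the heavier inverse-counting machinery inside \cref{thm:hom-cut-eq}, whereas the paper's proof invokes only the Sampling Lemma directly and keeps the appendix dependency chain tidier.
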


\begin{proof}
  By \cref{lemma:topological-equivalence}, we can consider an arbitrary strictly positive summable sequence $(\alpha_j)_{j\geq 1}$, so that the cut-distance $\cutd_{\cut}(\cdot,\cdot;(\alpha_j)_{j\geq 1})$ induces the $\cutd$-topology on $\kerns$.
  Let $W_1,W_2,\ldots$ be a convergent sequence $\kerns$ with respect to the cut-metric, with limiting complexon $W$.
  By \cref{lemma:counting}, this implies that for all $n\geq 1$, the sequence of distributions $\sample{K}(n,W_m)$ converges to $\sample{K}(n,W)$ in the total variation distance as $m\to\infty$.

  We now show that $\faceted{W_1},\faceted{W_2},\ldots\to\faceted{W}$ in the cut-metric.
  Let $\epsilon>0$ be given arbitrarily.
  Let $M_\epsilon$ be an integer such that $\sum_{j>M_\epsilon}\alpha_j<\epsilon/2$, and put
  \begin{equation*}
    n = \left\lceil 2^{\left(8\cdot 5^{M_\epsilon+1}/\epsilon\right)^2} \right\rceil.
  \end{equation*}
  By \cref{lemma:sampling}, we have that for any complexon $W$,
  \begin{equation*}
    \cutd_{\cut}(\faceted{W},\sample{K}(n,W);(\alpha_j)_{j\geq 1}) < \frac{\epsilon}{2}
  \end{equation*}
  with probability at least $1-\exp(-n/(2\log_2{n}))$.
  Since the sequence $\sample{K}(n,W_m)\to\sample{K}(n,W)$ converges in the total variation distance as $m\to\infty$, this implies that for some sufficiently large $m_0$, for each $m>m_0$, there is some simplicial complex $F$ such that simultaneously
  \begin{align*}
    \cutd_{\cut}(\faceted{W_m},F;(\alpha_j)_{j\geq 1}) &< \frac{\epsilon}{2} \\
    \cutd_{\cut}(\faceted{W},F;(\alpha_j)_{j\geq 1}) &< \frac{\epsilon}{2}.
  \end{align*}
  By the triangle inequality, this implies $\cutd_{\cut}(\faceted{W_m},\faceted{W};(\alpha_j)_{j\geq 1}) < \epsilon$ for all $m>m_0$.
  Thus, $\faceted{W_1},\faceted{W_2},\ldots\to\faceted{W}$ in the cut-distance, so that the map $\faceted{(\cdot)}$ is continuous, as desired.
\end{proof}

Defining $\faceted{\kerns}$ as the image of $\kerns$ under $\faceted{(\cdot)}$, \cref{lemma:cut-compactness,lemma:facet-continuous} imply that $\faceted{\kerns}$ is a compact subspace of $\kerns$ with respect to the $\cutd$-topology.
The compactness of the space $\kerns$ with the canonical topology follows.

\begin{proof}[Proof of \cref{thm:compactness}]
  From the pseudometric space $(\kerns,\cutd_{\cut})$, where $\cutd_{\cut}$ is defined in terms of an arbitrary strictly positive, summable sequence, take the metric identification $(\overline{\kerns},\overline{\cutd_{\cut}})$, and define the $\cutd$-topology on $\overline{\kerns}$ be the topology induced by the metric $\overline{\cutd_{\cut}}$.
  Since the metric identification preserves compactness $\overline{\kerns}$ is a compact space under the $\cutd$-topology, by \cref{lemma:cut-compactness}.
  Moreover, taking $\overline{\faceted{\kerns}}$ to be the image of $\overline{\kerns}$ under the map $\faceted{(\cdot)}$, we have that $(\overline{\faceted{\kerns}},\overline{\cutd_{\cut}})$ is a compact metric space by \cref{lemma:facet-continuous}.
  In other words, $\faceted{(\cdot)}:\overline{\kerns}\to\overline{\faceted{\kerns}}$ is a quotient map.

  It is clear that the canonical topology on $\kerns$ is equal to the quotient topology by the map $\faceted{(\cdot)}:\overline{\kerns}\to\overline{\faceted{\kerns}}$.
  Since the space $\overline{\kerns}$ with the quotient topology is compact, the canonical topology is compact, as desired.
\end{proof}

\vskip 0.2in
\bibliography{bibliography}

\end{document}